\def\@fnsymbol#1{\ensuremath{\ifcase#1\or 1\or 2\fi}}
\def\sha{\mathcal{A}}
\def\shd{\mathcal{D}}
\def\she{\mathcal{E}}
\def\shh{\mathcal{H}}
\def\shm{\mathcal{M}}
\def\sho{\mathcal{O}}
\def\shn{\mathcal{N}}
\newcommand{\C}{\mathbb{C}}
\newcommand{\R}{\mathbb{R}}
\newcommand{\Z}{\mathbb{Z}}
\newtheorem{theorem}{Theorem}[section]
\newtheorem{proposition}[theorem]{Proposition}
\newtheorem{lemma}[theorem]{Lemma}
\newtheorem{corollary}[theorem]{Corollary}
\theoremstyle{definition}
\newtheorem{definition}[theorem]{Definition}
\newtheorem{example}[theorem]{Example}
\newtheorem{remark}[theorem]{Remark}
\def\Op{\text{Op}(X_{sa})}
\begin{document}

\author{ Ana Rita Martins, Teresa Monteiro Fernandes\footnote{The research of the author was supported by
Funda\c c{\~a}o para a Ci{\^e}ncia e Tecnologia and Programa
Ci{\^e}ncia, Tecnologia e Inova\c c{\~a}o do Quadro
Comunit{\'a}rio de Apoio.}
}
\title{Formal extension of the Whitney functor and duality}
\date{\today}

\maketitle\footnote{Mathematics Subject
Classification. Primary: 32C28, 46A20; Secondary: 18E30, 46A13}

\begin{abstract}
 On a complex manifold we introduce the formal extension of the Whitney functor and the polynomial extension of the tempered cohomology functor, and prove  a natural topological duality between them.
\end{abstract}
\section{Introduction}

\hspace*{\parindent}In \cite{KS2}, Kashiwara and Schapira introduced the Whitney and the tempered cohomology functors on the subanalytic site $X_{sa}$  associated to a complex manifold $X$, giving  a meaning to $\mathcal{C}^{\infty,\text{w}}_{X_{sa}}$ and to $\sho_{X_{sa}}^{\text{w}}$ (Whitney $\mathcal{C}^{\infty}$ and holomorphic functions),  to $\mathcal{D}b_{X_{sa}}^t$ and to $\sho_{X_{sa}}^t$ (tempered distributions and tempered holomorphic functions) as sheaves (in the derived sense) on $X_{sa}$. We also refer to \cite{L} for a detailed study on sheaves on the subanalytic site.

Let $\shd_X$ denote the sheaf of linear differential operators on $X$.

The duality theorem of Kashiwara and Schapira (Theorem 6.1 of \cite{KS2}) states that taking global sections of the Whitney functor   gives a complex of topological  $\C$-vector spaces of type FN, in duality with the complex of compactly supported sections of  tempered cohomology, this last complex being of topological DFN type. Thus they generalized to the framework of $\shd_X$-modules the classical duality between $\mathcal{C}^{\infty}$-functions and distributions with compact support.

Influenced by several papers on Deformation Quantization, it became a natural question to extend various results in $\shd$-Module theory to the case of the formal extension of $\shd_X$ by a parameter $\hbar$, that is, to $\shd_X[[\hbar]]$-modules. We refer, in particular,  \cite{DGS} and \cite{D}.

In this paper, we are interested in extending the above described  topological duality  to this new framework.
The topological space $\C[[\hbar]]$ is a FN space. Its topological dual is classically known as the space $\C[\hbar]$, but here, as we shall show, it is more natural to consider its dual as being  the quotient of the fraction field $\C((\hbar))$ by $\C[[\hbar]]$,  $\C((\hbar))/ \C[[\hbar]]$, which will be denoted $\C^{[\hbar]}$ for short.  Clearly $\C^{[\hbar]}$ is isomorphic (as a $\C[[\hbar]]$-module with torsion) to the polynomial ring $\C[\hbar ^{-1}]$ with the relations $\hbar\times 1=0$, $\hbar\times \hbar^{-1}=0,\,\, \hbar\times\hbar^{-j}=\hbar^{-j+1}$, for $j>1$. For $f\in\C^{\hbar}$ and $g\in \C^{[\hbar]}$  the duality is then given by $\langle f, g\rangle= Res_{\hbar=0} fg$.

 We shall need to work with the (left) derived functor of the tensor product of sheaves of $\C^{\hbar}$-modules  by the $\C^{\hbar}$-module $\C^{[\hbar]}$.   The extension (of a $\C[[\hbar]]$-module) by $\C^{[\hbar]}$
will be  called ``polynomial extension" for short.


After reviewing notations and necessary results on the subanalytic site in Section 2 and on topological duality in Section 3, Section 4 is dedicated to introduce and study the formal extension of the Whitney functor, as a functor on the category of $\R$-constructible objects over $\C^{\hbar}$.  Inspired  by the formal extension of the functor of tempered cohomology performed by \cite{DGS}, we use the theory of sheaves on the subanalytic site and define the sheaves $\mathcal{C}_{X_{sa}}^{\infty, \text{w}, \hbar}$ (of  Whitney $\mathcal{C}^{\infty, \hbar}$ functions) and $\sho_{X_{sa}}^{\text{w}, \hbar}$ (of Whitney holomorphic functions) on $X_{sa}$.

 Section 5 is dedicated to introduce and study the polynomial extension functor. Namely, we introduce the notion of cohomologically $\hbar$-torsion object, as a kind of dual of the notion of cohomologically $\hbar$-complete introduced in \cite{KS4}. We construct the polynomial extensions of  tempered cohomology on the category of $\R$-constructible objects over $\C^{\hbar}$, following the same technique as in the preceding section, and we define the sheaves $Db_{X_{sa}}^{t, [\hbar]}$ and $\sho_{X_{sa}}^{t, [\hbar]}$.

We obtain  comparison results (cf. Proposition \ref{P:9}, Proposition \ref{P:17}) for formal and polynomial extensions of regular holonomic $\shd^{\hbar}$-modules as an application of the results in \cite{KS2}.

In Section 6 we state and prove the topological duality in the framework of the new functors. More precisely,
in Proposition \ref{P:21} we prove that taking global sections of the formal extension of the Whitney functor still leads to a complex of FN spaces, and that taking compactly supported sections of the polynomial extension of the tempered cohomology still leads to a complex of DFN spaces. Moreover we obtain a topological duality between these complexes.

 Theorem \ref{T:22} establishes the topological FN type of the complex of solutions of coherent $\shd_X^{\hbar}$-modules with values in the formal extension of the Whitney product, as well as the DFN type of the complex of compactly supported solutions of coherent $\shd_X^{\hbar}$-modules with values in the polynomial extension of the tempered holomorphic functions.   By Proposition \ref{P:1} these  complexes  are mutually dual.

It is a pleasure to thank M. Kashiwara and P. Schapira for their enlightening  suggestions. We also thank Luca Prelli for his comments on the subanalytic site and Stephane Guillermou for useful discussions.

\section{Review and complements on sheaves on the subanalytic site and formal extensions}
\hspace*{\parindent}For the background on sheaves on the subanalytic site and Propositions \ref{P:1} and \ref{P:3} below an  we refer to \cite{KS3} ( also to \cite{L}
for a detailed study).
For the background on formal extensions we refer to \cite{KS4} and also \cite{DGS} for the formal extension of the temperate cohomology functor.
\subsection{Sheaves on the subanalytic site}

\hspace*{\parindent}Let $\mathbb{K}$ be a unital Noetherian ring which we assume to have finite global dimension. In practice, throughout this paper, $\mathbb{K}$ will be $\C$ or $\C[[\hbar]]$.

Given a sheaf $\mathcal{R}$ of $\mathbb{K}$-algebras on a topological space $X$, or more generally, on a site, we denote by Mod$(\mathcal{R})$ the category of left $\mathcal{R}$-modules.  We use the notations $D(\mathcal{R})$ for the derived category of Mod($\mathcal{R}$) and $D^b(\mathcal{R})$ for its bounded derived category. We denote by $D^b_{coh}(\mathcal{R})$ the full triangulated subcategory of $D^b(\mathcal{R})$ consisting of objects with coherent cohomology.

For  a real analytic manifold $X$, we denote by Mod$_{\R-c}(\mathbb{K}_X)$ (resp. Mod$^c_{\R-c}(\mathbb{K}_X)$) the category of $\R$-constructible sheaves (resp. with compact support) of $\mathbb{K}$-modules. We denote $D^b_{\R-c}(\mathbb{K}_X)$ the bounded derived category of Mod$_{\R-c}(\mathbb{K}_X)$ of objects having $\R$-constructible cohomology. For $F\in D^b_{\R-c}(\mathbb{K}_X)$ we note $D^{'}(F)$ the object $R\shh om_{\mathbb{K}_X}(F, \mathbb{K}_X)$.

We also denote by $\mathcal{D}b_X$ the sheaf of Schwartz distributions, by $\mathcal{C}^{\infty}_X$ the sheaf of $\mathcal{C}^{\infty}$ functions, by $\sha_X$ the sheaf of real analytic functions and by $\sha_X^{\nu}$ the sheaf of real analytic densities.

 Let $X_{sa}$ denote the associated subanalytic site to a real analytic manifold $X$, that is, the presite $\Op$ of subanalytic open subsets of $X$ endowed with the Grothendieck topology for which the coverings are those admitting a finite sub-covering. Recall that one has a natural morphism of sites $\rho:X\to X_{sa}$ which induces functors
$$\text{Mod}(\C_X)\overset{\rho_*}{\underset{\rho^{-1}}{\rightleftarrows}}\text{Mod}(\C_{X_{sa}}),$$
and we still denote by $\rho_*$ the restriction of $\rho_*$ to
Mod$_{\R\text{-c}}(\C_X)$ and to Mod$_{\R\text{-c}}^c(\C_X)$. Recall that $\rho_*$ is left exact and that it induces an exact functor on Mod$_{\R-c}(\C_X)$. Thereby we identify $F$ and $\rho_*(F)$. Moreover, the functor $\rho^{-1}$ is left adjoint to $\rho_*$ and
 $\rho^{-1}$ admits a left adjoint, denoted by $\rho_!$. Recall that $\rho_!$F is the sheaf on $X_{sa}$ associated to the presheaf $U\to F(\overline{U})$, for $U\in\Op$.

\begin{proposition}\label{P:1}
Let $\{F_i\}_{i\in I}$ be a filtrant inductive system in $\mathrm{Mod}(\C_{X_{sa}})$ and let $U$ be a relatively compact subanalytic open subset of $X$. Then: $$\underset{\underset{i\in I
}{\longrightarrow}}{\lim }\Gamma(U;F_i)\xrightarrow{\sim} \Gamma(U;\underset{\underset{i\in I
}{\longrightarrow}}{\lim }F_i).$$
\end{proposition}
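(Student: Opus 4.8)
The plan is to reduce the statement to a known fact about sheaves on the subanalytic site, namely that on $X_{sa}$ the sheaves representing relatively compact subanalytic open subsets are \emph{quasi-compact} objects of the topos, so that the functor $\Gamma(U;-)$ commutes with filtrant colimits. Concretely, I would argue as follows. Since $U$ is relatively compact and subanalytic, it belongs to $\Op$, and by the very definition of the subanalytic site every covering of $U$ in $X_{sa}$ admits a finite subcovering. One always has a canonical morphism $\varinjlim_i \Gamma(U;F_i)\to \Gamma(U;\varinjlim_i F_i)$, obtained from the universal property of the colimit (here $\varinjlim_i F_i$ is the colimit computed in $\mathrm{Mod}(\C_{X_{sa}})$, i.e. the sheafification of the presheaf colimit); the content of the proposition is that this map is an isomorphism.

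The key step is to pass through the presheaf level. Let $\varinjlim_i^{\mathrm{pre}} F_i$ denote the presheaf colimit, $V\mapsto \varinjlim_i F_i(V)$; since filtrant colimits are exact and commute with finite limits, and since the sheaf condition on $X_{sa}$ only involves \emph{finite} coverings (hence finite limits), the presheaf $\varinjlim_i^{\mathrm{pre}} F_i$ is already a sheaf on $X_{sa}$. Therefore the sheafification is an isomorphism on the relevant objects and $\varinjlim_i F_i \cong \varinjlim_i^{\mathrm{pre}} F_i$ on $\Op$ — or at least the two agree after evaluation at $U$. Evaluating at $U$ then gives exactly $\Gamma(U;\varinjlim_i F_i)=\varinjlim_i F_i(U)=\varinjlim_i \Gamma(U;F_i)$, which is the desired isomorphism. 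To make the middle claim precise, I would check the sheaf axiom for $\varinjlim_i^{\mathrm{pre}} F_i$ on a finite covering $V=\bigcup_{k=1}^n V_k$ of an object $V\in\Op$: the equalizer diagram $\varinjlim_i F_i(V)\to \prod_k \varinjlim_i F_i(V_k)\rightrightarrows \prod_{k,l}\varinjlim_i F_i(V_k\cap V_l)$ is obtained from the equalizer diagrams for each $F_i$ by applying $\varinjlim_i$, and since the products involved are \emph{finite} and $I$ is filtrant, $\varinjlim_i$ commutes with these finite products and with the equalizer, so exactness is preserved.

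The main obstacle — and the only place where the hypotheses ``relatively compact'' and ``subanalytic'' are genuinely used — is precisely the reduction to finite coverings: for a general Grothendieck topology one cannot commute $\varinjlim_i$ with sheafification when evaluating at an object, because the plus construction involves colimits over \emph{all} coverings, which need not be filtrant-compatible. On $X_{sa}$ this is rescued by the defining property that coverings may be taken finite, which makes every $U\in\Op$ a quasi-compact object; relative compactness of $U$ is what guarantees $U\in\Op$ and that its subanalytic open coverings are refined by finite ones. I would also remark that this is the subanalytic-site analogue of the classical fact that on a topological space sections over a quasi-compact (quasi-separated) open commute with filtrant colimits of sheaves, and that a clean reference is \cite{KS3} (see also \cite{L}), from which the statement can alternatively be quoted directly.
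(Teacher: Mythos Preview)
The paper does not give its own proof of this proposition: it is stated in Section~2 as background, with an explicit pointer to \cite{KS3} and \cite{L} for the proof. Your closing remark---that the statement can simply be quoted from those references---is precisely what the paper does.

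That said, your sketch is the standard argument and is correct. The key is that in the Grothendieck topology defining $X_{sa}$ the coverings are, by definition, those admitting a finite subcovering; the sheaf condition therefore reduces to equalizers over finite products, and since filtrant colimits in $\mathrm{Mod}(\C)$ are exact and commute with finite limits, the presheaf colimit $\varinjlim_i^{\mathrm{pre}} F_i$ is already a sheaf. Evaluation at $U$ then gives the claim. One minor comment: under the paper's literal description of the topology (all subanalytic opens, coverings those with a finite subcover), \emph{every} object of the site is quasi-compact, so the hypothesis that $U$ be relatively compact plays no role at that level; in the references the site is sometimes presented in equivalent but slightly different ways (objects restricted to relatively compact opens, or coverings required only to be locally finite), and it is in those formulations that relative compactness of $U$ becomes the genuine input you identify.
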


\begin{proposition}\label{P:3}
Let $F=\underset{\underset{i\in I
}{\longrightarrow}}{\lim }F_i$ with $F_i\in\mathrm{Mod}(\C_{X_{sa}})$ and let $G\in D^b_{\R-c}(\C_X)$. One has: $$R^k\mathcal{H}{om}(G,F)\simeq \underset{\underset{i\in I
}{\longrightarrow}}{\lim }R^k\mathcal{H}{om}(G,F_i),$$ for each $k\in\Z$.
\end{proposition}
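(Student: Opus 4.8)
The plan is to reduce the statement, by a dévissage on $G$, to Proposition \ref{P:1} together with the single structural feature of $X_{sa}$ that is genuinely used: that every covering admits a finite subcovering. Passing to cohomology (filtrant inductive limits being exact, hence commuting with cohomology), it is equivalent to prove that the natural morphism $\theta\colon\varinjlim_i R\mathcal{H}om(G,F_i)\to R\mathcal{H}om(G,\varinjlim_i F_i)$ in $D(\C_{X_{sa}})$ is an isomorphism. Since $R^k\mathcal{H}om(G,F)$ is a sheaf on $X_{sa}$, since $R\mathcal{H}om$ and restriction to an open subsite localize, and since $R\Gamma(V;R\mathcal{H}om(G,F))\simeq R\mathrm{Hom}_{\C_{V_{sa}}}(G|_V,F|_V)$ with $\cdot|_{V_{sa}}$ compatible with inductive limits, this comes down to two assertions: \emph{(a)} for every relatively compact subanalytic open $W$, the functor $R\Gamma(W;\cdot)$ commutes with filtrant inductive limits in $\mathrm{Mod}(\C_{X_{sa}})$ (this being what identifies the source of $R\Gamma(V;\theta)$ with $\varinjlim_i R\mathrm{Hom}_{\C_{V_{sa}}}(G|_V,F_i|_V)$); and \emph{(b)} for a relatively compact subanalytic open $V$, the functor $R\mathrm{Hom}_{\C_{V_{sa}}}(G|_V,\cdot)$ commutes with filtrant inductive limits.

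For (b) I would argue by dévissage in $G$. Using the truncation triangles, every $G\in D^b_{\R-c}(\C_X)$ is a finite iterated extension of shifts of $\R$-constructible sheaves; as $R\mathrm{Hom}_{\C_{V_{sa}}}(\cdot,\cdot)$ is cohomological in the first argument and filtrant inductive limits are exact, the class of $G$ for which (b) holds is triangulated, so one may assume $G|_V$ is a single $\R$-constructible sheaf on $V$. Such a sheaf admits a \emph{finite} resolution by finite direct sums of sheaves $\C_{V_{sa},W}$, $W$ relatively compact subanalytic open (a standard property of $\R$-constructible sheaves; one first resolves by the $\C_{V_{sa},Z}$ with $Z$ locally closed subanalytic, then reduces to the open case via the triangles $\C_{V_{sa},W'}\to\C_{V_{sa},W}\to\C_{V_{sa},Z}\xrightarrow{+1}$ for $Z=W\setminus W'$). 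Feeding this finite resolution into the spectral sequence of its stupid filtration --- which is bounded, hence compatible with filtrant inductive limits by exactness of the latter --- reduces (b) to the case $G|_V=\C_{V_{sa},W}$. But $\mathrm{Hom}_{\C_{V_{sa}}}(\C_{V_{sa},W},\cdot)=\Gamma(W;\cdot)$ as functors, hence $R\mathrm{Hom}_{\C_{V_{sa}}}(\C_{V_{sa},W},\cdot)\simeq R\Gamma(W;\cdot)$, and (b) in this case is precisely (a).

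It remains to prove (a), which carries the real content. At the level of $H^0$ it is exactly Proposition \ref{P:1}. For the higher cohomology one uses that, by the very definition of $X_{sa}$, every covering of $W$ admits a finite subcovering, and that finite intersections of relatively compact subanalytic opens are again of this kind; consequently $R\Gamma(W;F)$ is computed as the inductive limit, over the filtrant family of finite coverings $\mathcal{W}=(W_1,\dots,W_n)$ of $W$ by relatively compact subanalytic opens, of the \v{C}ech complexes $\check C^\bullet(\mathcal{W};F)$, whose term in each degree is a \emph{finite} direct sum $\bigoplus\Gamma(W_{j_0}\cap\cdots\cap W_{j_p};F)$ of sections functors of the kind treated by Proposition \ref{P:1}. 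Hence each $\check C^\bullet(\mathcal{W};\cdot)$ commutes with filtrant inductive limits, and so does each $H^p\check C^\bullet(\mathcal{W};\cdot)$ by exactness of filtrant $\varinjlim$; passing to the limit over $\mathcal{W}$ and interchanging the two filtrant inductive limits yields $R^k\Gamma(W;\varinjlim_i F_i)\simeq\varinjlim_i R^k\Gamma(W;F_i)$, i.e. (a).

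The hard part is (a) in positive cohomological degree: a filtrant inductive limit of injective objects of $\mathrm{Mod}(\C_{X_{sa}})$ need not be injective, so the commutation cannot be read off from an injective resolution and must instead be extracted from the finiteness of coverings of $X_{sa}$, via \v{C}ech complexes that are then finite in each degree and hence compatible with filtrant inductive limits. The one delicate ingredient is the comparison of \v{C}ech with derived-functor cohomology on $X_{sa}$ underlying that computation, for which I would appeal to \cite{KS3} (see also \cite{L}); everything else --- the dévissage in $G$, the finite resolution of $\R$-constructible sheaves over a relatively compact open, and the spectral-sequence bookkeeping --- is formal.
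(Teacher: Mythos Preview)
The paper does not give its own proof of this proposition: it is listed among the background facts imported from \cite{KS3} and \cite{L}. So the relevant comparison is with how those references handle it.

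Your d\'evissage step (b) is sound and matches what the references do implicitly: over a relatively compact $V$, the almost free resolution of an $\R$-constructible sheaf becomes a bounded complex of \emph{finite} direct sums of sheaves $\C_{W}$, and then $R\mathrm{Hom}_{\C_{V_{sa}}}(\C_W,\cdot)\simeq R\Gamma(W;\cdot)$ reduces everything to (a).

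The gap is in your argument for (a) in positive degrees. You assert that $R\Gamma(W;F)$ is computed by the filtrant colimit of \v{C}ech complexes over finite coverings of $W$. For an arbitrary sheaf $F$ on a site this is false: \v{C}ech cohomology and derived-functor cohomology agree in degrees $0$ and $1$ in general, but beyond that one needs either hypercoverings or an acyclicity hypothesis (Cartan's criterion), neither of which you establish. Your appeal to \cite{KS3} for ``the comparison of \v{C}ech with derived-functor cohomology on $X_{sa}$'' does not help, because no such comparison for arbitrary sheaves is stated there; the finiteness of coverings on $X_{sa}$ alone does not force \v{C}ech to compute $R\Gamma$.

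The argument in \cite{KS3} and \cite{L} avoids \v{C}ech entirely. One introduces the class of \emph{quasi-injective} sheaves on $X_{sa}$ (those $J$ for which $\Gamma(U;J)\to\Gamma(V;J)$ is surjective whenever $V\subset U$ are relatively compact subanalytic opens) and shows three things: there are enough of them, they are $\Gamma(W;\cdot)$-acyclic for every relatively compact subanalytic open $W$, and they are stable under filtrant inductive limits. Given a filtrant system $(F_i)$, choose compatible quasi-injective resolutions $F_i\to J_i^\bullet$; then $\varinjlim_i J_i^\bullet$ is a quasi-injective resolution of $\varinjlim_i F_i$ (exactness of filtrant $\varinjlim$), and applying $\Gamma(W;\cdot)$ termwise together with Proposition~\ref{P:1} gives (a) directly. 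This is what your \v{C}ech step should be replaced by.
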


Recall that the functor $\rho_*$ does not commute with direct sums in general. However, this is true when considering a direct sum of copies of a same $\R$-constructible module, which will suffice for our purposes. For a set of indexes $I$, let us set $F^{\oplus I}:=\oplus_{i\in I} F_i$ with $F_i=F$.

More precisely, we have the following property:

\begin{lemma}\label{L:I}
Let  $F$ be an object of $\mathrm{Mod}_{\R-c}(\C_X)$. Then for any set $I$ of indexes  one has: $$\rho_*(F^{\oplus I})\simeq (\rho_*F)^{\oplus I}$$ in $\mathrm{Mod}_{X_{sa}}(\C_X)$.
\end{lemma}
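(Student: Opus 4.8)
The plan is to reduce the statement to the well-known fact that, on $X_{sa}$, a subanalytic open covering admits a finite subcovering, which is exactly the site-theoretic incarnation of quasi-compactness. Since $\rho_*$ is always left exact, the content is the commutation with the infinite direct sum, and the only place a direct sum of sheaves can fail to be computed sectionwise is on sections over an open set that is not quasi-compact; on $X_{sa}$, however, \emph{every} object of $\Op$ is quasi-compact, so sheafification of the presheaf direct sum changes nothing on sections over subanalytic opens. Concretely, I would argue as follows.

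First, recall that for any family $\{G_i\}_{i\in I}$ in $\mathrm{Mod}(\C_{X_{sa}})$ the direct sum $\oplus_{i\in I}G_i$ is the sheafification of the presheaf $U\mapsto \oplus_{i\in I}G_i(U)$, and that this sheafification is an isomorphism on sections over quasi-compact objects of the site. Apply this with $G_i=\rho_*F$ for all $i$: for every $U\in\Op$ one gets
$$
\bigl((\rho_*F)^{\oplus I}\bigr)(U)\;\simeq\;\bigoplus_{i\in I}(\rho_*F)(U)\;=\;\bigoplus_{i\in I}F(\overline{U}\cap\, \cdot\,)\text{-type sections},
$$
i.e. $\bigl((\rho_*F)^{\oplus I}\bigr)(U)\simeq\bigoplus_{i\in I}\Gamma(U;\rho_*F)$, using that $U$ is quasi-compact in $X_{sa}$. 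This identifies the right-hand side of the asserted isomorphism with the presheaf direct sum evaluated on $\Op$, and it is already a sheaf.

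Next I compute the left-hand side on the same sections. By definition $\Gamma(U;\rho_*(F^{\oplus I}))=\Gamma(U;F^{\oplus I})$ where now $F^{\oplus I}=\oplus_{i\in I}F$ is the direct sum in $\mathrm{Mod}(\C_X)$, hence the sheafification on $X$ of $V\mapsto\oplus_{i\in I}F(V)$. Here the point is that although $U$ need not be quasi-compact for the ordinary topology of $X$, it \emph{is} relatively compact subanalytic; one may cover $\overline U$ (hence a subanalytic neighborhood of it) by finitely many relatively compact subanalytic opens on which $F$ is "constant enough" — more precisely, using that $F$ is $\R$-constructible one reduces, via a subanalytic stratification adapted to $F$ and a finite good cover, to the case $F=\C_Z$ for $Z$ locally closed subanalytic, where the sectionwise direct sum is already a sheaf on the relevant subanalytic opens because only finitely many connected components are involved over a relatively compact set. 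Thus $\Gamma(U;F^{\oplus I})\simeq\oplus_{i\in I}\Gamma(U;F)$ as well, and the two presheaves $U\mapsto\Gamma(U;\rho_*(F^{\oplus I}))$ and $U\mapsto\Gamma(U;(\rho_*F)^{\oplus I})$ agree on $\Op$, giving the isomorphism of sheaves on $X_{sa}$; naturality is immediate since every step is functorial in $F$ and compatible with the transition/restriction maps.

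The main obstacle is precisely the intermediate claim that for $F\in\mathrm{Mod}_{\R-c}(\C_X)$ and $U$ relatively compact subanalytic one has $\Gamma(U;\oplus_{i\in I}F)\simeq\oplus_{i\in I}\Gamma(U;F)$ — that is, that the presheaf direct sum is already separated and exact on sections over relatively compact subanalytic opens. This requires using $\R$-constructibility in an essential way (it is false for a general sheaf on $X$): one invokes the finiteness built into $\R$-constructible sheaves (finitely many strata, finite-dimensional stalks) to guarantee that a section of the sheafification over $U$, being locally a finite sum of sections of the $F$-summands, is actually globally such a finite sum after passing to a finite subanalytic refinement of the cover. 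Once this is established the rest is formal bookkeeping with adjunctions and sheafification.
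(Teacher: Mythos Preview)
Your overall strategy matches the paper's: both sides are sheaves on $X_{sa}$, so it suffices to compare sections over relatively compact subanalytic opens; Proposition~\ref{P:1} handles the right-hand side, and the work is to show $\Gamma(U;F^{\oplus I})\simeq \Gamma(U;F)^{\oplus I}$ for $F$ $\R$-constructible and $U$ relatively compact subanalytic. You correctly flag this as the main obstacle, but your resolution of it is where the argument breaks down.

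The phrase ``cover $\overline U$ by finitely many relatively compact subanalytic opens on which $F$ is constant enough'' does not produce a valid reduction. An $\R$-constructible sheaf is locally constant along strata, which are \emph{locally closed}, not open; there need not exist any open neighbourhood of a boundary stratum on which $F$ is locally constant (think of $\C_{\{0\}}$ on $\R$). Likewise, ``reduce via a subanalytic stratification and a finite good cover to $F=\C_Z$'' is not a recognised d\'evissage: a stratification does not by itself express $F$ as built from $\C_Z$'s in a way that interacts well with $\Gamma(U;\cdot)$ and $(\cdot)^{\oplus I}$ simultaneously. Finally, the sentence ``after passing to a finite subanalytic refinement of the cover'' begs the question---a section of $F^{\oplus I}$ over $U$ comes with an \emph{arbitrary} open cover in the usual topology of $X$, and producing a finite subanalytic refinement is precisely the nontrivial step.

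The paper fills this gap in two moves. First, it reduces to $F=\C_U$ not via stratifications but via an almost free resolution $0\to\oplus\C_{U_{1,i}}\to\cdots\to\oplus\C_{U_{m,i}}\to 0$ of $F$, using that $(\cdot)^{\oplus I}$ is exact on $\mathrm{Mod}(\C_X)$ and that $\rho_*$ is exact on both $\R$-constructible and weakly $\R$-constructible sheaves (so one may apply $\rho_*$ termwise on both $F^\bullet$ and $(F^\bullet)^{\oplus I}$). Second, for $F=\C_U$ and any relatively compact subanalytic $V$, it invokes the triangulation theorem to produce a \emph{concrete} finite subanalytic cover of $V$ by open stars $i(U(\sigma))$ of a simplicial complex adapted to $U$ and $V$; on each such star, Proposition~8.1.4 of \cite{KS1} identifies $\Gamma(i(U(\sigma));K_U)$ with a stalk, for $K=\C$ or $K=\C^{\oplus I}$, and the comparison is then immediate. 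Your ``finitely many connected components'' heuristic is in fact the right intuition for this second step, but it only becomes a proof once you have either the triangulation or at least the explicit reduction to $\C_U$ in hand.
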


\begin{proof}
 Every object  $F\in\text{Mod}_{\R-c}(\C_X)$ admits a finite resolution of the form:
 \begin{equation}
0\to \underset{i\in I_1}{\oplus}\C_{U_{1,i}}\to\cdots\to \underset{i\in I_m}{\oplus}\C_{U_{m,i}}\to 0,
\end{equation}
 by locally finite families $\{U_{k,i_k}\}_{k,i_k}$ of relatively compact open subanalytic sets of $X$ (see Appendix of {KS2}). Since the functor $(\cdot)^{\oplus I}$ is exact on Mod$(\C_X)$,  we have a quasi-isomorphism:
 \begin{equation}
F^{\oplus I}\underset{\text{qis}}{\simeq} 0\to \underset{i\in I_1}{\oplus}(\C_{U_{1,i}})^{\oplus I}\to\cdots\to \underset{i\in I_m}{\oplus}(\C_{U_{m,i}})^{\oplus I}\to 0
\end{equation}
in Mod$(\C_X)$.
On the other hand,  $\R$-constructible sheaves are injective with respect to the functor $\rho_*$, which entails a quasi-isomorphism
 \begin{equation}
\rho_*F\underset{qis}{\simeq} 0\to \underset{i\in I_1}{\oplus}\rho_*\C_{U_{1,i}}\to\cdots\to \underset{i\in I_m}{\oplus}\rho_*\C_{U_{m,i}}\to 0,
\end{equation} in Mod$(\C_{X_{sa}})$,
 hence a quasi-isomorphism
 \begin{equation}
 (\rho_*F)^{\oplus I}\underset{qis}{\simeq}0\to \underset{i\in I_1}{\oplus}(\rho_*\C_{U_{1,i}})^{\oplus I}\to\cdots\to \underset{i\in I_m}{\oplus}(\rho_*\C_{U_{m,i}})^{\oplus I}\to 0
\end{equation}
in Mod$(\C_{X_{sa}})$.
Since weakly $\R-$constructible   sheaves are also injective with respect to the functor $\rho_*$, we obtain a quasi-isomorphism

 \begin{equation}
\rho_*(F^{\oplus I})\underset{qis}{\simeq}0\to \underset{i\in I_1}{\oplus}\rho_*(\C_{U_{1,i}})^{\oplus I}\to\cdots\to \underset{i\in I_m}{\oplus}\rho_*(\C_{U_{m,i}})^{\oplus I}\to 0
\end{equation}
in Mod$(\C_{X_{sa}})$.
Therefore, we are reduced to prove that $\rho_*(\C_U^{\oplus I})\simeq (\rho_*\C_U)^{\oplus I}$, for any relatively compact open subanalytic subset $U\subset X$. This will follow if we prove that, for each relatively compact open subanalytic subset $V\subset X$, there exists a finite covering $\{V_i\}_i$ of $V$ by open subanalytic sets $V_i$ such that $\Gamma(V_i;\rho_*(\C_U^{\oplus I}))\simeq \Gamma(V_i;(\rho_*\C_U)^{\oplus I})$.

Indeed, since direct sums are a particular case of inductive limits, by Proposition \ref{P:1} we have, for any relatively compact $\Omega\in\Op$: $$\Gamma(\Omega;(\rho_*\C_U)^{\oplus I})\simeq \Gamma(\Omega;\C_U)^{\oplus I}.$$  Therefore, by the isomorphism $\C_U^{\oplus I}\simeq (\C^{\oplus I})_U$,  we have to prove that there exists a finite covering $\{V_i\}_i$ of $V$ by open subanalytic sets $V_i$ $$
\Gamma(V_i;(\C^{\oplus I})_U)\simeq \Gamma(V_i;\C_U)^{\oplus I}.$$
In the rest of the proof we shall use $K$ to denote either $\C$ or $\C^{\oplus I}$ and we follow the notations of \cite{KS1} for constructibility on a simplicial complex.

Let us consider the subanalytic stratification of $X$ given by $$(U\cap V)\sqcup(U\backslash V)\sqcup(V\backslash U)\sqcup(X\backslash (U\cup V)).$$ By the triangulation theorem (cf \cite{KS1}) there exist a simplicial complex $(S, \Delta)$ and a homeomorphism $i: |S|\to X$ compatible with the stratification above such that $V$ is a finite union of connected open subanalytic sets of the form $i(U(\sigma))=i(\bigcup_{\tau \in\Delta, \tau\supseteq \sigma} |\tau|)$. More precisely,  $V=\bigcup_{i(|\sigma|)\subset V}i(U(\sigma))$. On the other hand, given $\sigma\in\Delta$ such that $i(|\sigma|)\subset V$, and $x\in|\sigma|$, by Proposition 8.1.4 of \cite{KS1} we get: $$\Gamma(i(U(\sigma)); K_U)\simeq \Gamma(U(\sigma); i^{-1}K_U)\simeq \left(i^{-1}K_U\right)_x\simeq (K_U)_{i(x)},$$ since $i^{-1}K_U$ is a weakly $S$-constructible sheaf. Therefore $$\Gamma(i(U(\sigma));K_U)\simeq\begin{cases} K, \text{if $i(|\sigma|)\subset U$}\\  0, \text{if $i(|\sigma|)\nsubseteq U$}\end{cases},$$ which entails the desired isomorphism, taking as $(V_i)$ the covering  by   $(i(U(\sigma)))$ of $V$.
\end{proof}

We shall now give a short overview on the Whitney functor (\cite{KS2}), denoted by $\overset{\text{w}}{\otimes}$, and on the tempered cohomoloy functor, denoted by $t\shh om$ (introduced in \cite{K2} and detailedly studied in \cite{KS2}).

 The Whitney functor, denoted by  $(\cdot)\overset{\text{w}}{\otimes}\mathcal{C}_{X}^{\infty}$, is a functor from $D^b_{\R-c}(\C_X)$ to $D^b(\shd_X)$, inducing an exact functor from $\text{Mod}_{\R-c}(\C_X)$   to $\text{Mod}(\shd_X)$, and such that, for $U$ open subanalytic in $X$, $$\C_U\overset{\text{w}}{\otimes}\mathcal{C}_{X}^{\infty}=\mathcal{I}^\infty_{X,X\backslash U},$$
the sheaf of $\mathcal{C}^\infty$ functions on $X$ vanishing up to infinite order on $X\backslash U$.

If $\mathcal{L}$ is a $\sha_X$-locally free module of finite rank, one sets
$$F\overset{\text{w}}{\otimes}(\mathcal{C}_{X}^{\infty}\underset{\sha_X}{\otimes}\mathcal{L}):=(F\overset{\text{w}}{\otimes}\mathcal{C}_{X}^{\infty})\underset{\sha_X}{\otimes}\mathcal{L}.$$

 The  tempered distribution cohomology functor, denoted by $t\mathcal{H}om(\cdot, \shd b_X)$ a functor from $D^b_{\R-c}(\C_X)$ to $ D^b(\shd_X)$, inducing an exact functor from  $\text{Mod}_{\R-c}(\C_X)$ to $\text{Mod}(\shd_X)$ and such that, for $Z$ closed subanalytic in $X$, $$t\mathcal{H}om(\C_Z, \shd b_X)=\Gamma_Z(\shd b_X),$$
the sheaf of Schwartz distributions supported by $Z$.

One notes $t\mathcal{H}om(\cdot, \shd b_X^{\nu}):=t\mathcal{H}om(\cdot, \shd b_X)\underset{\sha_X}{\otimes} \sha_X^{\nu}.$

These two functors are extended as sheaves on the subanalytic site $X_{sa}$ (\cite{KS4}) as follows:

Let  $\mathcal{C}_{X_{sa}}^{\infty,\text{w}}$ denote the sheaf on $X_{sa}$ of Whitney $\mathcal{C}^{\infty}$-functions, that is, the sheaf defined by: $$U\mapsto \Gamma(X;R\mathcal{H}{om}(\C_U,\C_M)\overset{\text{w}}{\otimes}\mathcal{C}_{X}^{\infty}).$$

Let  $\shd b^t_{X_{sa}}$ denote the sheaf on $X_{sa}$ of tempered distributions, that is, the sheaf defined by: $$U\mapsto \Gamma(X; t\mathcal{H}{om}(\C_U, \shd b_X)).$$

 We have the following isomorphisms in $D^b(\shd_X)$:

For $F\in D^b_{\R-c}(\C_X)$,
$$F\overset{\text{w}}{\otimes}\mathcal{C}_{X}^{\infty} \simeq \rho^{-1}(R\mathcal{H}om(D'(F), \mathcal{C}_{X_{sa}}^{\infty, \text{w}}))$$ and

$$t\mathcal{H}om(F, \shd b_X)\simeq \rho^{-1}(R\shh om(F, \shd b_{X_{sa}}^t)).$$

For a complex analytic manifold $X$, we denote by $\shd_X$ the sheaf of differential operators of finite order, by $\sho_X$ the sheaf of holomorphic functions and by $\Omega_X$ the sheaf of holomorphic differential forms of maximal degree. Considering the complex conjugate structure  in $X$  and denoting it by $\overline{X}$, one defines the following sheaves on $X_{sa}$:

The sheaf of holomorphic Whitney functions, $\sho_{X_{sa}}^{\text{w}}$, given by $$\sho_{X_{sa}}^{\text{w}}=R\shh om_{\rho_!\shd_{\overline{X}}}(\rho_!\sho_{\overline{X}}, \mathcal{C}^{\infty, \text{w}}_{\overline{X}_{sa}}).$$

The sheaf of tempered holomorphic functions, $\sho_{X_{sa}}^t$, given by

$$\sho^t_{X_{sa}}=R\shh om_{\rho_!\shd_{\overline{X}}}(\rho_!\sho_{\overline{X}}, \mathcal{D} b^t_{\overline{X}_{sa}}).$$

\subsection{Review on formal extensions and the formal tempered cohomology functor}

\hspace*{\parindent}Let $\mathcal{R}$ be a $\mathbb{Z}[\hbar]$-algebra such that $\hbar: \mathcal{R}\to \mathcal{R}$ is injective (i.e, $\mathcal{R}$ is free of $\hbar$-torsion.)
We note $\mathcal{R}^{\text{loc}}:=\mathbb{Z}[\hbar, \hbar^{-1}]\otimes_{\mathbb{Z}[\hbar]}\mathcal{R}$, and $\mathcal{R}_0:=\mathcal{R}/\hbar \mathcal{R}$. We obtain the functors $$(\cdot)^{\text{loc}}:\text{Mod}(\mathcal{R})\to \text{Mod}(\mathcal{R}^{\text{loc}}), \shm\to\shm^{\text{loc}}:=\mathcal{R}^{\text{loc}}\otimes_{\mathcal{R}}\shm,$$ which is exact,
and
$$gr_{\hbar}:D(\mathcal{R})\to D(\mathcal{R}_0), \shm\to gr_{\hbar}(\shm)=\mathcal{R}_0\overset{L}{\otimes}_{\mathcal{R}}\shm.$$
Recall that $\shm\in D(\mathcal{R})$ is cohomologically $\hbar$-complete if $R\mathcal{H}om_{\mathcal{R}}(\mathcal{R}^{\text{loc}}, \shm)=0$.
We say that  a $\mathbb{Z}[\hbar]$-module $\shm$ is $\hbar$-complete if $\shm\to\underset{j\geq 0}{\underset{\longleftarrow}{\lim}} \shm/\hbar^j\shm$ is an isomorphism.

\begin{proposition}\label{P121}
The functor $gr_{\hbar}$ is conservative on the category of cohomologically $\hbar$-complete objects, that is, if $\shm\in D(\mathcal{R})$ is cohomologically $\hbar$-complete and $gr_{\hbar}(\shm)=0$, then $\shm=0$.
\end{proposition}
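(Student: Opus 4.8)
The plan is to reformulate both hypotheses as statements about multiplication by $\hbar$ on $\shm$ and then play them against each other. Since $\mathcal{R}$ is free of $\hbar$-torsion, $0\to\mathcal{R}\xrightarrow{\hbar}\mathcal{R}\to\mathcal{R}_0\to 0$ is a free resolution of $\mathcal{R}_0$ of length one; tensoring it with $\shm$ identifies $gr_{\hbar}(\shm)$ with $\mathrm{cone}(\shm\xrightarrow{\hbar}\shm)$ and gives a distinguished triangle $\shm\xrightarrow{\hbar}\shm\to gr_{\hbar}(\shm)\xrightarrow{+1}$ in $D(\mathcal{R})$. Hence $gr_{\hbar}(\shm)=0$ means precisely that $\hbar\colon\shm\to\shm$ is an isomorphism in $D(\mathcal{R})$, and therefore so is $\hbar^{n}\colon\shm\to\shm$ for every $n\ge 1$.

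I would next compute $R\mathcal{H}om_{\mathcal{R}}(\mathcal{R}/\hbar^{n}\mathcal{R},\shm)$: resolving $\mathcal{R}/\hbar^{n}\mathcal{R}$ by $0\to\mathcal{R}\xrightarrow{\hbar^{n}}\mathcal{R}\to\mathcal{R}/\hbar^{n}\mathcal{R}\to 0$ identifies this object with $\mathrm{cone}(\shm\xrightarrow{\hbar^{n}}\shm)[-1]$, which vanishes since $\hbar^{n}$ is invertible on $\shm$. Because $\hbar$ is injective on $\mathcal{R}$ one also has isomorphisms $\hbar^{-n}\mathcal{R}/\mathcal{R}\xrightarrow{\sim}\mathcal{R}/\hbar^{n}\mathcal{R}$, under which $\mathcal{R}^{\text{loc}}/\mathcal{R}$ becomes the filtrant inductive limit of $\mathcal{R}/\hbar\mathcal{R}\xrightarrow{\hbar}\mathcal{R}/\hbar^{2}\mathcal{R}\xrightarrow{\hbar}\cdots$, hence the cokernel of a monomorphism $\bigoplus_{n\ge 1}\mathcal{R}/\hbar^{n}\mathcal{R}\to\bigoplus_{n\ge 1}\mathcal{R}/\hbar^{n}\mathcal{R}$. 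Applying $R\mathcal{H}om_{\mathcal{R}}(-,\shm)$ to the associated short exact sequence, and using the isomorphism $R\mathcal{H}om_{\mathcal{R}}(\bigoplus_{n}\mathcal{R}/\hbar^{n}\mathcal{R},\shm)\simeq\prod_{n}R\mathcal{H}om_{\mathcal{R}}(\mathcal{R}/\hbar^{n}\mathcal{R},\shm)=0$, I obtain $R\mathcal{H}om_{\mathcal{R}}(\mathcal{R}^{\text{loc}}/\mathcal{R},\shm)=0$.

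It then suffices to apply $R\mathcal{H}om_{\mathcal{R}}(-,\shm)$ to the exact sequence $0\to\mathcal{R}\to\mathcal{R}^{\text{loc}}\to\mathcal{R}^{\text{loc}}/\mathcal{R}\to 0$, which yields the distinguished triangle
$$R\mathcal{H}om_{\mathcal{R}}(\mathcal{R}^{\text{loc}}/\mathcal{R},\shm)\longrightarrow R\mathcal{H}om_{\mathcal{R}}(\mathcal{R}^{\text{loc}},\shm)\longrightarrow\shm\overset{+1}{\longrightarrow}.$$
The first term vanishes by the previous step and the middle term vanishes because $\shm$ is cohomologically $\hbar$-complete, whence $\shm=0$.

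Most of the argument is formal once the two reformulations are in place, namely that $gr_{\hbar}(\shm)=0$ is equivalent to $\hbar$ acting invertibly on $\shm$, and the vanishing of $R\mathcal{H}om_{\mathcal{R}}(\mathcal{R}/\hbar^{n}\mathcal{R},\shm)$ that follows from it. The one step deserving some care is the telescope presentation of $\mathcal{R}^{\text{loc}}/\mathcal{R}$ together with the commutation of $R\mathcal{H}om_{\mathcal{R}}(-,\shm)$ with the countable direct sums appearing there. If one prefers to avoid it, one may instead note that, since $\hbar$ acts invertibly on each $H^{j}(\shm)$, the natural morphism $\shm\to\shm^{\text{loc}}$ is an isomorphism, while $R\mathcal{H}om_{\mathcal{R}}(\mathcal{R}^{\text{loc}},N)\simeq N$ for every $N\in D(\mathcal{R}^{\text{loc}})$ because $\mathcal{R}\to\mathcal{R}^{\text{loc}}$ is a flat epimorphism of rings; cohomological $\hbar$-completeness then gives directly $0=R\mathcal{H}om_{\mathcal{R}}(\mathcal{R}^{\text{loc}},\shm)\simeq\shm^{\text{loc}}\simeq\shm$.
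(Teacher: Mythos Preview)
Your argument is correct. Note, however, that the paper does not supply its own proof of this proposition: it is stated in Section~2.2 as review material, with the reference ``For the background on formal extensions we refer to \cite{KS4}'' (Kashiwara--Schapira, \emph{Deformation quantization modules}). So there is no in-paper proof to compare against.

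That said, your approach matches the standard one in \cite{KS4}. The key reformulation---that $gr_{\hbar}(\shm)=0$ is equivalent to $\hbar$ acting invertibly on $\shm$ in $D(\mathcal{R})$---is exactly the pivot used there, and the conclusion via cohomological $\hbar$-completeness is the same. Your alternative route at the end (observing that $\shm\simeq\shm^{\mathrm{loc}}$ once $\hbar$ is invertible, and then using that $R\mathcal{H}om_{\mathcal{R}}(\mathcal{R}^{\mathrm{loc}},\shm)\simeq\shm$ for such $\shm$) is in fact slightly cleaner than the telescope computation of $R\mathcal{H}om_{\mathcal{R}}(\mathcal{R}^{\mathrm{loc}}/\mathcal{R},\shm)$, since it avoids the bookkeeping with countable sums and products; either way the argument goes through for sheaves of $\mathbb{Z}[\hbar]$-algebras just as for rings, because $\mathcal{R}^{\mathrm{loc}}$ admits the two-term free resolution $\bigoplus_{n}\mathcal{R}\to\bigoplus_{n}\mathcal{R}$ coming from $\mathcal{R}^{\mathrm{loc}}=\varinjlim(\mathcal{R}\xrightarrow{\hbar}\mathcal{R}\xrightarrow{\hbar}\cdots)$.
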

\begin{proposition}\label{P122}
For a given cohomologically $\hbar$-complete object $\shm\in D(\mathcal{R})$, for any $\shn\in D(\mathcal{R})$, $R\shh om_{\mathcal{R}}(\shn, \shm)$ is cohomologically $\hbar$-complete.
\end{proposition}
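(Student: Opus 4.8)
The plan is to unwind the definition of cohomological $\hbar$-completeness. Write $\mathbb{A}=\Z[\hbar]$, so that $\mathbb{A}^{\mathrm{loc}}=\Z[\hbar,\hbar^{-1}]$ and, since $\mathcal{R}$ is an $\mathbb{A}$-algebra, $\mathcal{R}^{\mathrm{loc}}=\mathcal{R}\otimes_{\mathbb{A}}\mathbb{A}^{\mathrm{loc}}$. Because $\mathbb{A}^{\mathrm{loc}}$ is flat over $\mathbb{A}$ one has $\mathcal{R}^{\mathrm{loc}}=\mathcal{R}\overset{L}{\otimes}_{\mathbb{A}}\mathbb{A}^{\mathrm{loc}}$, and the extension--restriction of scalars adjunction gives, for every $\mathcal{F}\in D(\mathcal{R})$, a canonical isomorphism $R\shh om_{\mathcal{R}}(\mathcal{R}^{\mathrm{loc}},\mathcal{F})\simeq R\shh om_{\mathbb{A}}(\mathbb{A}^{\mathrm{loc}},\mathcal{F})$ (cf. \cite{KS4}). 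Thus cohomological $\hbar$-completeness of an object depends only on its underlying complex of $\mathbb{A}$-modules, and to prove the Proposition it suffices to check that $R\shh om_{\mathbb{A}}\bigl(\mathbb{A}^{\mathrm{loc}},R\shh om_{\mathcal{R}}(\shn,\shm)\bigr)=0$.

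The tool I would use is the length-one free resolution of $\mathbb{A}^{\mathrm{loc}}$ over $\mathbb{A}$ coming from the mapping telescope of $\mathbb{A}\xrightarrow{\,\hbar\,}\mathbb{A}\xrightarrow{\,\hbar\,}\cdots$, namely the short exact sequence
\[
0\longrightarrow \mathbb{A}^{(\N)}\xrightarrow{\,1-s\,}\mathbb{A}^{(\N)}\longrightarrow \mathbb{A}^{\mathrm{loc}}\longrightarrow 0,
\]
where $s$ carries the $n$-th generator to $\hbar$ times the $(n{+}1)$-st (injectivity of $1-s$ is immediate, surjectivity onto $\mathbb{A}^{\mathrm{loc}}$ is the computation of the telescope colimit). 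Consequently, for any $\mathcal{F}\in D(\mathbb{A})$, the object $R\shh om_{\mathbb{A}}(\mathbb{A}^{\mathrm{loc}},\mathcal{F})$ is the totalization of the two-term complex $\bigl[\,\mathcal{F}^{\N}\xrightarrow{\,1-\sigma\,}\mathcal{F}^{\N}\,\bigr]$ placed in degrees $0,1$, where $\mathcal{F}^{\N}=\prod_{n\in\N}\mathcal{F}$ and $\sigma$ combines the index shift with the $\hbar$-action on $\mathcal{F}$.

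Now I would apply this to $\mathcal{F}=R\shh om_{\mathcal{R}}(\shn,\shm)$. The $\hbar$-action on $R\shh om_{\mathcal{R}}(\shn,\shm)$ is induced, through the central element $\hbar\in\mathcal{R}$, by the $\hbar$-action on $\shm$; moreover $R\shh om_{\mathcal{R}}(\shn,-)$ is triangulated and commutes with countable products (a countable product of K-injective complexes is again K-injective). Pulling $R\shh om_{\mathcal{R}}(\shn,-)$ through the telescope therefore yields
\begin{align*}
R\shh om_{\mathbb{A}}\bigl(\mathbb{A}^{\mathrm{loc}},R\shh om_{\mathcal{R}}(\shn,\shm)\bigr)
&\simeq \bigl[\,R\shh om_{\mathcal{R}}(\shn,\shm)^{\N}\xrightarrow{\,1-\sigma\,}R\shh om_{\mathcal{R}}(\shn,\shm)^{\N}\,\bigr]\\
&\simeq R\shh om_{\mathcal{R}}\!\Bigl(\shn,\ \bigl[\,\shm^{\N}\xrightarrow{\,1-\sigma\,}\shm^{\N}\,\bigr]\Bigr)\\
&\simeq R\shh om_{\mathcal{R}}\bigl(\shn,\ R\shh om_{\mathbb{A}}(\mathbb{A}^{\mathrm{loc}},\shm)\bigr).
\end{align*}
Since $\shm$ is cohomologically $\hbar$-complete, the first paragraph gives $R\shh om_{\mathbb{A}}(\mathbb{A}^{\mathrm{loc}},\shm)\simeq R\shh om_{\mathcal{R}}(\mathcal{R}^{\mathrm{loc}},\shm)=0$, whence the right-hand side is $R\shh om_{\mathcal{R}}(\shn,0)=0$, as desired.

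The routine parts are the flatness/adjunction reduction and the identification of $R\shh om_{\mathbb{A}}(\mathbb{A}^{\mathrm{loc}},-)$ with the telescope complex. The one step that deserves care is the middle isomorphism in the display, i.e. that the functor $R\shh om_{\mathcal{R}}(\shn,-)$ may be commuted past the telescope: this reduces to its commuting with countable products and to the compatibility of the two descriptions of the $\hbar$-action (via $\shn$ and via $\shm$), both of which follow from $\hbar$ being central in $\mathcal{R}$. Equivalently — and this is what I would expect to be the ``morally'' right statement of the argument — the cohomologically $\hbar$-complete objects form a triangulated subcategory of $D(\mathbb{A})$ stable under countable products, and the telescope computation above is simply a closed form of the dévissage of $\shn$ that reduces the claim to the trivial case $\shn=\mathcal{R}$.
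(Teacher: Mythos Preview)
Your argument is correct. However, the paper does not give its own proof of this proposition: it is stated in the review subsection on formal extensions as a background result taken from \cite{KS4}, so there is nothing in the text to compare against directly.

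That said, the proof in \cite{KS4} (which is what the paper implicitly invokes) proceeds by a pure adjunction argument rather than via the explicit telescope resolution. One writes
\[
R\shh om_{\mathcal{R}}\bigl(\mathcal{R}^{\mathrm{loc}},\,R\shh om_{\mathcal{R}}(\shn,\shm)\bigr)
\;\simeq\;
R\shh om_{\mathcal{R}}\bigl(\mathcal{R}^{\mathrm{loc}}\overset{L}{\otimes}_{\mathcal{R}}\shn,\,\shm\bigr)
\;\simeq\;
R\shh om_{\mathcal{R}^{\mathrm{loc}}}\bigl(\shn^{\mathrm{loc}},\,R\shh om_{\mathcal{R}}(\mathcal{R}^{\mathrm{loc}},\shm)\bigr),
\]
the first step being tensor--hom adjunction and the second being extension-of-scalars adjunction along $\mathcal{R}\to\mathcal{R}^{\mathrm{loc}}$; the right-hand side vanishes since $\shm$ is cohomologically $\hbar$-complete. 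Your telescope computation is a concrete unpacking of exactly this chain of adjunctions: it has the advantage of making explicit why $R\shh om_{\mathcal{R}}(\shn,-)$ may be commuted past $R\shh om_{\mathbb{A}}(\mathbb{A}^{\mathrm{loc}},-)$ (via commutation with countable products), whereas the adjunction proof hides this in the formal bimodule manipulations. Both buy the same conclusion; the adjunction version is shorter and is what a reader of the paper would find upon following the reference.
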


We set $\mathbb{C}^{\hbar}$ to shortly denote the ring  $\mathbb{C}[[\hbar]]$ of formal power series in the $\hbar$ variable and set $\C^{\hbar, loc}:=\C((\hbar))\simeq \C[\hbar^{-1}, \hbar]]$ the field of fractions of $\C^{\hbar}$.

Recall  the (left exact) functor of formal extension  $(\cdot)^\hbar:\text{Mod}(\mathbb{C}_X)\to \text{Mod}(\mathbb{C}_X^{\hbar})$, defined by $$F\to F^{\hbar}:=\underset{j\geq 0}{\underset{\longleftarrow}{\lim}} (F\otimes \mathbb{C}_X^{\hbar}/\hbar^j\mathbb{C}_X^{\hbar}).$$   We denote by $(\cdot)^{R\hbar}$ its right derived functor.
\begin{proposition}\label{P:123}
For any $F\in D^b(\C_X)$, its formal extension $F^{R\hbar}$ is cohomologically $\hbar$-complete.
\end{proposition}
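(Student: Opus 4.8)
The plan is to reduce the statement to the affine/local model by resolving $F$ and then to verify cohomological $\hbar$-completeness on a single term of that resolution. Recall that cohomological $\hbar$-completeness means $R\mathcal{H}om_{\C_X^\hbar}((\C_X^\hbar)^{\mathrm{loc}}, F^{R\hbar})=0$, and observe that this vanishing can be tested stalkwise (the functor $R\mathcal{H}om_{\C_X^\hbar}((\C_X^\hbar)^{\mathrm{loc}},-)$ commutes with $(-)_x$ since $(\C_X^\hbar)^{\mathrm{loc}}$ is, locally, a complex of free modules of finite type, indeed $\C_X^\hbar$ and $\C_X^{\hbar,\mathrm{loc}}=\C((\hbar))_X$ are constant sheaves of rings). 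Moreover, since the class of cohomologically $\hbar$-complete objects is triangulated (stable under shifts and cones, by the long exact sequence obtained from $R\mathcal{H}om_{\C_X^\hbar}((\C_X^\hbar)^{\mathrm{loc}},-)$), and $F\in D^b(\C_X)$, a standard dévissage lets us assume $F$ is a single sheaf placed in degree $0$, and then, using a flabby (or injective) resolution, that $F$ is flabby.

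First I would record the elementary module-theoretic fact: for any $\C$-vector space $V$, the $\C^\hbar$-module $V^\hbar=V[[\hbar]]=\varprojlim_j V\otimes_\C\C^\hbar/\hbar^j$ is $\hbar$-complete in the sense defined above, hence cohomologically $\hbar$-complete as an object of $D(\C^\hbar)$ (this is exactly the content invoked in \cite{KS4}: $\hbar$-complete modules with no $\hbar$-torsion, or more generally derived-$\hbar$-complete modules, satisfy $R\mathrm{Hom}_{\C^\hbar}(\C^{\hbar,\mathrm{loc}},-)=0$; here $V[[\hbar]]$ is $\hbar$-torsion-free and $\hbar$-adically complete). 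Next I would identify the stalk: since $F$ is flabby, $F^{R\hbar}\simeq F^\hbar$ (the derived projective limit equals the projective limit because the transition maps $F\otimes\C_X^\hbar/\hbar^{j+1}\to F\otimes\C_X^\hbar/\hbar^j$ are surjective with flabby kernels, so the Mittag-Leffler condition holds and $R^1\varprojlim$ vanishes). It then remains to compute $(F^\hbar)_x$. One has $(F\otimes_{\C_X}\C_X^\hbar/\hbar^j\C_X^\hbar)_x\simeq F_x\otimes_\C\C^\hbar/\hbar^j$, and because the projective system is surjective (again Mittag-Leffler), passing to the stalk commutes with $\varprojlim_j$, giving $(F^\hbar)_x\simeq F_x[[\hbar]]=(F_x)^\hbar$.

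Combining these: $(F^{R\hbar})_x\simeq F_x[[\hbar]]$, which by the first step is cohomologically $\hbar$-complete in $D(\C^\hbar)$; since the vanishing of $R\mathcal{H}om_{\C_X^\hbar}((\C_X^\hbar)^{\mathrm{loc}}, F^{R\hbar})$ is checked on stalks and each stalk is $R\mathrm{Hom}_{\C^\hbar}(\C^{\hbar,\mathrm{loc}}, F_x[[\hbar]])=0$, we conclude $F^{R\hbar}$ is cohomologically $\hbar$-complete. Finally one removes the reduction to a single flabby sheaf: an arbitrary $F\in D^b(\C_X)$ is represented by a bounded complex of flabby sheaves, each term is handled as above, and stability of cohomological $\hbar$-completeness under the (finitely many) mapping cones assembling that complex finishes the proof.

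The main obstacle I anticipate is the interchange of the stalk functor with the derived projective limit $(-)^{R\hbar}=R\varprojlim_j(-\otimes\C_X^\hbar/\hbar^j)$: a priori $R\varprojlim$ does not commute with $(-)_x$ for general sheaves, and one genuinely needs the flabbiness (or a suitable acyclicity) of the $F$-term to force the Mittag-Leffler condition and kill $R^1\varprojlim$ both before and after taking stalks. The other point requiring a little care is that $(\C_X^\hbar)^{\mathrm{loc}}$ should be replaced, for the stalkwise $R\mathcal{H}om$ computation, by a resolution by locally free $\C_X^\hbar$-modules of finite rank — here it suffices that $\C_X^\hbar\to\C_X^{\hbar,\mathrm{loc}}$ is, up to quasi-isomorphism, the two-term complex $[\,\bigoplus_{j\ge 0}\C_X^\hbar \xrightarrow{\ \hbar\cdot-\mathrm{shift}\ } \bigoplus_{j\ge 0}\C_X^\hbar\,]$, i.e. the telescope computing the localization, which commutes with stalks termwise — so the reduction is harmless.
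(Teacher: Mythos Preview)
The paper does not give its own proof: the proposition is stated in the review subsection and attributed to the references (in particular, Proposition~2.2 of \cite{DGS}, as the paper notes when invoking the result after~(\ref{E:1})). Your argument, however, has a genuine gap. The assertion $(F^\hbar)_x\simeq F_x[[\hbar]]$ is false in general, even for flabby $F$: the stalk is the filtered colimit $\varinjlim_{U\ni x}F(U)[[\hbar]]$, and filtered colimits do not commute with the countable product hidden in $[[\hbar]]$. Mittag--Leffler only controls $R^1\varprojlim$; it says nothing about interchanging $\varinjlim$ with $\varprojlim$. Concretely, for a flabby sheaf one can choose coefficients $f_k$ whose supports shrink to $x$ so that the element $\sum_k f_k\hbar^k$ lies in $\bigcap_j\hbar^j\bigl((F^\hbar)_x\bigr)$ yet is nonzero; thus $(F^\hbar)_x$ need not be $\hbar$-adically separated, and one checks it is not cohomologically $\hbar$-complete either. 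The same obstruction kills your telescope fix: applying $R\mathcal{H}om$ to $\bigoplus_{j\ge 0}\C_X^\hbar$ produces a countable \emph{product}, and the stalk of a countable product of sheaves is not the product of the stalks, so $\bigl(R\mathcal{H}om_{\C_X^\hbar}(\C_X^{\hbar,\mathrm{loc}},G)\bigr)_x$ is \emph{not} $R\mathrm{Hom}_{\C^\hbar}(\C^{\hbar,\mathrm{loc}},G_x)$.

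The argument in the cited references avoids stalks entirely. Since $R\mathcal{H}om_{\C_X^\hbar}(\C_X^{\hbar,\mathrm{loc}},-)$ commutes with homotopy limits, one has
\[
R\mathcal{H}om_{\C_X^\hbar}\bigl(\C_X^{\hbar,\mathrm{loc}},F^{R\hbar}\bigr)\;\simeq\;R\varprojlim_j\,R\mathcal{H}om_{\C_X^\hbar}\bigl(\C_X^{\hbar,\mathrm{loc}},\,F\otimes\C_X^\hbar/\hbar^j\bigr),
\]
and each term on the right vanishes because the target is annihilated by $\hbar^j$ while $\hbar$ acts invertibly on the source (equivalently, via the telescope, one solves $m_j-\hbar m_{j+1}=c_j$ by the finite sum $m_j=\sum_{k=0}^{j-1}\hbar^k c_{j+k}$). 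No reduction to flabby sheaves or stalks is needed.
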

\begin{proposition}\label{P:4}
Let $\mathcal{I}$ be either a basis of open subsets of a site $X$ or, assuming that $X$ is a locally compact topological space, a basis of compact subsets. Denote by $\mathcal{J}_{\mathcal{I}}$ the full subcategory of $\mathrm{Mod}(\C_X)$ consisting of $\mathcal{I}$-acyclic objects, i.e., sheaves $\mathcal{N}$ for which $H^k(S;\mathcal{N})=0$ for all $k>0$ and all $S\in\mathcal{I}$. Then $\mathcal{J}_{\mathcal{I}}$ is injective with respect to the functor $(\cdot)^\hbar$. In particular, for $\mathcal{N}\in\mathcal{J}_{\mathcal{I}}$, we have $\mathcal{N}^\hbar\simeq \mathcal{N}^{R\hbar}$.
\end{proposition}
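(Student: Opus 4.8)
The plan is to check directly that $\mathcal{J}_{\mathcal{I}}$ satisfies the three axioms making it injective with respect to $(\cdot)^{\hbar}$: it is cogenerating; it is stable under cokernels of monomorphisms between its objects; and $(\cdot)^{\hbar}$ carries a short exact sequence whose leftmost term lies in $\mathcal{J}_{\mathcal{I}}$ to a short exact sequence. Since $(\cdot)^{\hbar}$ is left exact and admits a right derived functor $(\cdot)^{R\hbar}$, the third axiom will follow once $R^{k}(\cdot)^{\hbar}(\mathcal{N})=0$ is shown for all $k\geq 1$ and all $\mathcal{N}\in\mathcal{J}_{\mathcal{I}}$, and this vanishing is exactly the ``in particular'' assertion $\mathcal{N}^{\hbar}\simeq\mathcal{N}^{R\hbar}$. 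So I would prove: (a) $\mathcal{J}_{\mathcal{I}}$ is cogenerating; (b) $\mathcal{J}_{\mathcal{I}}$ is stable under cokernels of monomorphisms between its objects; (c) $R^{k}(\cdot)^{\hbar}$ vanishes on $\mathcal{J}_{\mathcal{I}}$ for every $k\geq 1$.

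Parts (a) and (b) are routine. An injective sheaf is flasque, and a flasque sheaf is $\mathcal{I}$-acyclic: when $\mathcal{I}$ is a basis of open subsets because flasque sheaves have no higher cohomology on any open set, and when $\mathcal{I}$ is a basis of compact subsets because $X$ is locally compact, so that $R\Gamma(S;\mathcal{F})\simeq\varinjlim_{U\supseteq S}R\Gamma(U;\mathcal{F})$ is concentrated in degree $0$ for $\mathcal{F}$ flasque. As $\mathrm{Mod}(\mathbb{C}_{X})$ has enough injectives, (a) follows. For (b), given $0\to\mathcal{N}'\to\mathcal{N}\to\mathcal{N}''\to 0$ with $\mathcal{N}',\mathcal{N}\in\mathcal{J}_{\mathcal{I}}$, apply the triangulated functor $R\Gamma(S;-)$ for $S\in\mathcal{I}$ and read off from the long exact sequence that $H^{k}(S;\mathcal{N}'')=0$ for $k\geq 1$.

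The real content is (c), which I would obtain by dimension shifting from the case $k=1$. Fix $\mathcal{N}\in\mathcal{J}_{\mathcal{I}}$, choose a monomorphism $\mathcal{N}\hookrightarrow I$ into an injective sheaf and set $\mathcal{Q}:=I/\mathcal{N}$; then $\mathcal{Q}\in\mathcal{J}_{\mathcal{I}}$ by (b). The long exact sequence of $R^{\bullet}(\cdot)^{\hbar}$ attached to $0\to\mathcal{N}\to I\to\mathcal{Q}\to 0$, together with $R^{k}(\cdot)^{\hbar}(I)=0$ for $k\geq 1$, gives $R^{k+1}(\cdot)^{\hbar}(\mathcal{N})\simeq R^{k}(\cdot)^{\hbar}(\mathcal{Q})$, so an induction on $k$ reduces everything to $R^{1}(\cdot)^{\hbar}(\mathcal{N})=0$. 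Using left exactness and $R^{1}(\cdot)^{\hbar}(I)=0$, the same sequence identifies $R^{1}(\cdot)^{\hbar}(\mathcal{N})$ with $\mathrm{coker}(I^{\hbar}\to\mathcal{Q}^{\hbar})$, so it remains to show that $I^{\hbar}\to\mathcal{Q}^{\hbar}$ is an epimorphism of sheaves; since $\mathcal{I}$ is a basis, it suffices to check that $\Gamma(S;I^{\hbar})\to\Gamma(S;\mathcal{Q}^{\hbar})$ is surjective for $S\in\mathcal{I}$. Because $\mathbb{C}_{X}^{\hbar}/\hbar^{j}\mathbb{C}_{X}^{\hbar}$ is $\mathbb{C}_{X}$-free of rank $j$, one has functorially $I^{\hbar}\simeq\varprojlim_{j}I^{\oplus j}$ with the projections as transition maps, and (as $\Gamma(S;-)$ commutes with inverse limits and with finite direct sums, in the compact case after passing to the filtered colimit over open neighbourhoods of $S$) this gives $\Gamma(S;I^{\hbar})\simeq\Gamma(S;I)[[\hbar]]$, likewise $\Gamma(S;\mathcal{Q}^{\hbar})\simeq\Gamma(S;\mathcal{Q})[[\hbar]]$, the map being the coefficientwise extension of $\bar f\colon\Gamma(S;I)\to\Gamma(S;\mathcal{Q})$. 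Now $\bar f$ is surjective, because in the long exact cohomology sequence of $R\Gamma(S;-)$ its cokernel injects into $H^{1}(S;\mathcal{N})$, which vanishes as $\mathcal{N}\in\mathcal{J}_{\mathcal{I}}$; and a coefficientwise extension of a surjection of $\mathbb{C}$-modules is surjective. Hence $I^{\hbar}\to\mathcal{Q}^{\hbar}$ is an epimorphism, $R^{1}(\cdot)^{\hbar}(\mathcal{N})=0$, and (c) follows. Everything here is formal: long exact sequences of $\delta$-functors, dimension shifting, and the two standard facts about a basis -- a sheaf map surjective on sections over every basic set is an epimorphism, and a sheaf with vanishing sections over every basic set is $0$.

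The one genuinely delicate point is the identification $\Gamma(S;I^{\hbar})\simeq\Gamma(S;I)[[\hbar]]$ when $S$ is a \emph{compact} member of $\mathcal{I}$: there $\Gamma(S;-)$ is a filtered colimit over open neighbourhoods of $S$, which does not commute with the countable inverse limit defining $(\cdot)^{\hbar}$, so one must know that the pro-object formed by the sections (respectively by the groups $H^{1}(\,\cdot\,;\mathcal{N})$) over shrinking neighbourhoods of $S$ is essentially constant (respectively pro-zero). This is precisely where local compactness of $X$ and the $\mathcal{I}$-acyclicity of $\mathcal{N}$ are really needed, and it is the step I expect to require care; in the open-basis case $\Gamma(S;-)$ preserves all limits and the difficulty disappears.
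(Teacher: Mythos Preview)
The paper does not supply a proof of this proposition: it appears in the review subsection on formal extensions and is stated as background, with the surrounding material attributed to \cite{KS4} and \cite{DGS}. So there is no ``paper's own proof'' to compare against; your write-up must stand on its own.

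For the open-basis case your argument is correct and complete. The three-step verification that $\mathcal{J}_{\mathcal{I}}$ is $(\cdot)^{\hbar}$-injective is the standard one, and the key computation $\Gamma(S;I^{\hbar})\simeq\Gamma(S;I)[[\hbar]]$ goes through because $\Gamma(S;-)$ for open $S$ preserves limits; the surjectivity of $\bar f$ then follows from $H^{1}(S;\mathcal{N})=0$, exactly as you say.

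The compact case, however, has a genuine gap that you flag but do not close, and your diagnosis of what is missing is not quite right. The identification $\Gamma(S;I^{\hbar})\simeq\Gamma(S;I)[[\hbar]]$ is \emph{false} for compact $S$: the left side is $\varinjlim_{U}\Gamma(U;I)[[\hbar]]$ (germs of sequences, all defined on a common neighbourhood), while the right side is $(\varinjlim_{U}\Gamma(U;I))[[\hbar]]$ (sequences of germs, each on its own neighbourhood), and the natural map is only injective. Your suggestion that the pro-system $(\Gamma(U;I))_{U}$ be ``essentially constant'' would repair this, but that condition simply does not hold for a general injective sheaf. What tautness and $\mathcal{I}$-acyclicity actually give is $\varinjlim_{U}H^{1}(U;\mathcal{N})=H^{1}(S;\mathcal{N})=0$, which lets you kill each individual obstruction class $\alpha_{n}\in H^{1}(U;\mathcal{N})$ on some smaller $V_{n}$; it does \emph{not} produce a single neighbourhood $V$ on which all countably many $\alpha_{n}$ vanish simultaneously, and that is precisely what lifting an element of $\Gamma(S;\mathcal{Q}^{\hbar})$ requires. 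A cleaner route in the compact case is to bypass sections over $S$ entirely and argue with the derived inverse limit: since $(\cdot)^{\hbar}\simeq\varprojlim_{j}(\cdot)^{\oplus j}$ with split-epimorphic transition maps, one can try to show directly that this system is $\varprojlim$-acyclic in $\mathrm{Mod}(\mathbb{C}_{X})$, or invoke the treatment in \cite{DGS}.
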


 The following result which is contained in \cite{DGS}, Lemma 2.3:

\begin{lemma}\label{L:5}
Assume that $\mathcal{R}$ is  a $\C_X$-algebra.
Then,  for $\mathcal{M}, \mathcal{N}\in D^b(\mathcal{R})$, we have an isomorphism in $D^b(\C_X^{\hbar})$ $$R\mathcal{H}{om}_{\mathcal{R}}(\mathcal{M},\mathcal{N})^{R\hbar}\simeq R\mathcal{H}{om}_{\mathcal{R}}(\mathcal{M},\mathcal{N}^{R\hbar}).$$
\end{lemma}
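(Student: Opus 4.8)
My plan is to realise the formal extension as a derived projective limit of tensorisations by the finite free $\C_X$-modules $\C_X^\hbar/\hbar^j\C_X^\hbar$, and then commute this limit through $R\mathcal{H}om_{\mathcal{R}}(\mathcal{M},-)$, using that the latter commutes with finite tensorisations (a projection formula) and with derived projective limits. Set $\C_j:=\C_X^\hbar/\hbar^j\C_X^\hbar\simeq\C_X[\hbar]/(\hbar^j)$, a free $\C_X$-module of rank $j$; each functor $(\cdot)\otimes_{\C_X}\C_j$ is then exact, and (as recalled via Proposition \ref{P:4} and the references therein) for $\mathcal{P}\in D^b(\C_X)$ one has $\mathcal{P}^{R\hbar}\simeq R\varprojlim_j(\mathcal{P}\otimes_{\C_X}\C_j)$ in $D^b(\C_X^\hbar)$, because $(\cdot)\otimes_{\C_X}\C_j$ sends an injective $\C_X$-module $I$ to the flasque sheaf $I^{\oplus j}$, the transition maps $\C_{j+1}\to\C_j$ are epimorphisms, and a projective system of flasque sheaves with epimorphic transitions is $R\varprojlim$-acyclic. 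Since $\mathcal{R}$ is a $\C_X$-algebra, for $\mathcal{N}\in D^b(\mathcal{R})$ the complexes $\mathcal{N}\otimes_{\C_X}\C_j$ carry an $\mathcal{R}$-module structure through the first factor, $j\mapsto\mathcal{N}\otimes_{\C_X}\C_j$ is a projective system, and its $R\varprojlim$ is $\mathcal{N}^{R\hbar}$; the same holds with $\mathcal{N}$ replaced by $R\mathcal{H}om_{\mathcal{R}}(\mathcal{M},\mathcal{N})$.

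First I would set up the finite-level projection formula. For each $j$ there is a canonical morphism in $D^b(\C_X^\hbar)$, natural in $\mathcal{M}$, $\mathcal{N}$ and $\C_j$,
$$R\mathcal{H}om_{\mathcal{R}}(\mathcal{M},\mathcal{N})\otimes_{\C_X}\C_j\longrightarrow R\mathcal{H}om_{\mathcal{R}}(\mathcal{M},\mathcal{N}\otimes_{\C_X}\C_j)$$
(on representatives, $\phi\otimes a\mapsto(x\mapsto\phi(x)\otimes a)$), and it is an isomorphism because, choosing the $\C_X$-basis $1,\hbar,\dots,\hbar^{j-1}$ of $\C_j$, both sides identify with $R\mathcal{H}om_{\mathcal{R}}(\mathcal{M},\mathcal{N})^{\oplus j}$ by additivity of $R\mathcal{H}om_{\mathcal{R}}(\mathcal{M},-)$. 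Being canonical, this morphism is natural in $\C_j$, so the isomorphisms for varying $j$ are compatible with the transition maps and form an isomorphism of projective systems indexed by $j$.

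Then I would apply $R\varprojlim_j$ to this isomorphism of systems. The left-hand side becomes $R\mathcal{H}om_{\mathcal{R}}(\mathcal{M},\mathcal{N})^{R\hbar}$ by the description of the formal extension above. For the right-hand side, $R\mathcal{H}om_{\mathcal{R}}(\mathcal{M},-)$ is triangulated and commutes with small products, hence with $R\varprojlim$ over $\N$, so
$$R\varprojlim_j R\mathcal{H}om_{\mathcal{R}}(\mathcal{M},\mathcal{N}\otimes_{\C_X}\C_j)\simeq R\mathcal{H}om_{\mathcal{R}}\bigl(\mathcal{M},R\varprojlim_j(\mathcal{N}\otimes_{\C_X}\C_j)\bigr)\simeq R\mathcal{H}om_{\mathcal{R}}(\mathcal{M},\mathcal{N}^{R\hbar}).$$
This yields the asserted isomorphism in $D^b(\C_X^\hbar)$, with the $\C_X^\hbar$-structure on both sides induced from $\C_X^\hbar=\varprojlim_j\C_j$.

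The points needing care are essentially of bookkeeping nature: ensuring that the level-$j$ projection formula is genuinely natural in $j$, so that it survives $R\varprojlim$ rather than holding only level by level, and the identification $\mathcal{N}^{R\hbar}\simeq R\varprojlim_j(\mathcal{N}\otimes_{\C_X}\C_j)$, which rests on the $R\varprojlim$-acyclicity of the tensorised injective resolutions. I would emphasise that organising the argument around the finite truncations $\C_j$, rather than around the non-finitely-generated $\C_X$-module $\C_X^\hbar$ itself or around the $\hbar$-graded pieces via Propositions \ref{P121}--\ref{P122}, is what makes a projection formula available at all and keeps the statement valid for arbitrary $\mathcal{M},\mathcal{N}\in D^b(\mathcal{R})$: commuting $gr_\hbar$ past $R\mathcal{H}om_{\mathcal{R}}(\mathcal{M},-)$ would force a perfectness hypothesis on $\mathcal{M}$, which is not imposed here.
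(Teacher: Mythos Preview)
The paper does not supply its own proof of this lemma; it simply records that the result is Lemma~2.3 of \cite{DGS}. Your direct argument---realising $(\cdot)^{R\hbar}$ as $R\varprojlim_j\bigl((\cdot)\otimes_{\C_X}\C_j\bigr)$ with each $\C_j$ free of finite rank over $\C_X$, using the resulting level-wise projection formula, and then commuting $R\varprojlim_j$ past $R\mathcal{H}om_{\mathcal{R}}(\mathcal{M},-)$ via the Milnor description in terms of products---is correct and is the natural proof; it is almost certainly the argument in \cite{DGS} as well. The two bookkeeping points you flag are indeed routine: naturality in $j$ of the projection isomorphism is automatic from its canonical description $\phi\otimes a\mapsto(x\mapsto\phi(x)\otimes a)$, and the $R\varprojlim$-acyclicity of the system $(I\otimes_{\C_X}\C_j)_j$ for $I$ injective follows since these are flasque sheaves with surjective transition maps.
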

 We recall the properties of the functor $(\cdot)^{R\hbar}$  proved in \cite{DGS} in view of the subanalytic site.
\begin{lemma}\label{L:6}
\begin{itemize}
\item[(i)] The functors $\rho^{-1}$ and $(\cdot)^{R\hbar}$ commute, that is, for $G\in D^b(\C_{X_{sa}})$ we have $(\rho^{-1}G)^{R\hbar}\simeq \rho^{-1}(G^{R\hbar})$ in $D^b(\C_X^\hbar)$.

\item[(ii)] The functors $R\rho_*$ and $(\cdot)^{R\hbar}$ commute, that is, for $F\in D^b(\C_X)$ we have $(R\rho_*F)^{R\hbar}\simeq R\rho_*(F^{R\hbar})$ in $D^b(\C_{X_{sa}}^\hbar)$.
\end{itemize}
\end{lemma}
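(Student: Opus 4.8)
The plan is to reduce both assertions to a concrete description of $(\cdot)^{R\hbar}$ as a derived projective limit and then to push $\rho^{-1}$, resp.\ $R\rho_*$, past that limit using their exactness and adjunction properties. The preliminary observation is that, since $\C^\hbar/\hbar^j\C^\hbar$ is a locally constant sheaf of free $\C$-modules of finite rank, tensoring by it is an exact functor which commutes with $\rho^{-1}$ (inverse image is monoidal and sends a constant sheaf to a constant sheaf, so $\rho^{-1}(\,\cdot\otimes\C_{X_{sa}}^\hbar/\hbar^j\C_{X_{sa}}^\hbar)\simeq(\rho^{-1}\,\cdot\,)\otimes\C_X^\hbar/\hbar^j\C_X^\hbar$) and with $R\rho_*$ (trivial projection formula: both $R\rho_*(F\otimes\rho^{-1}(\C_{X_{sa}}^\hbar/\hbar^j\C_{X_{sa}}^\hbar))$ and $(R\rho_*F)\otimes\C_{X_{sa}}^\hbar/\hbar^j\C_{X_{sa}}^\hbar$ identify with $(R\rho_*F)^{\oplus j}$, compatibly with the transition maps and the $\C^\hbar/\hbar^j$-action). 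Moreover, for any $H$ in $D^b$ (on $X$ or on $X_{sa}$) one has a functorial isomorphism $H^{R\hbar}\simeq R{\varprojlim}_j(H\otimes\C^\hbar/\hbar^j\C^\hbar)$: taking a bounded below complex $\mathcal N^\bullet$ of $\mathcal I$-acyclic sheaves quasi-isomorphic to $H$ (with $\mathcal I$ the family of all open sets on $X$, resp.\ a basis of relatively compact subanalytic open sets on $X_{sa}$), Proposition \ref{P:4} gives $H^{R\hbar}\simeq(\mathcal N^\bullet)^{\hbar}={\varprojlim}_j(\mathcal N^\bullet\otimes\C^\hbar/\hbar^j\C^\hbar)$, while each tower $(\mathcal N^p\otimes\C^\hbar/\hbar^j\C^\hbar)_j$ has surjective transition maps and $\mathcal I$-acyclic terms, hence is $\varprojlim$-acyclic, so on it ${\varprojlim}_j$ already computes $R{\varprojlim}_j$.

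For (ii), the preliminary step gives
\[
(R\rho_*F)^{R\hbar}\ \simeq\ R{\varprojlim}_j\big((R\rho_*F)\otimes\C_{X_{sa}}^\hbar/\hbar^j\C_{X_{sa}}^\hbar\big)\ \simeq\ R{\varprojlim}_j\,R\rho_*\big(F\otimes\C_X^\hbar/\hbar^j\C_X^\hbar\big).
\]
Since $\rho_*$ is a right adjoint it commutes with projective limits, and this persists after deriving: $R\rho_*$ commutes with $R{\varprojlim}$ (using the Milnor presentation of $R{\varprojlim}$ over $\N$ by a two-term complex of countable products, together with the facts that a product of injectives is injective and that $R\rho_*$ commutes with products and with shifts and cones). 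Hence the last term is $R\rho_*\big(R{\varprojlim}_j(F\otimes\C_X^\hbar/\hbar^j\C_X^\hbar)\big)\simeq R\rho_*(F^{R\hbar})$.

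For (i), the preliminary step gives $\rho^{-1}(G^{R\hbar})\simeq\rho^{-1}\big(R{\varprojlim}_j(G\otimes\C_{X_{sa}}^\hbar/\hbar^j\C_{X_{sa}}^\hbar)\big)$. Now $\rho^{-1}$ is exact and, crucially, it admits the left adjoint $\rho_!$; being thus a right adjoint it commutes with all projective limits, in particular with the countable products occurring in the two-term complex computing $R{\varprojlim}$, and being exact it commutes with the formation of that complex. Therefore $\rho^{-1}\,R{\varprojlim}_j\simeq R{\varprojlim}_j\,\rho^{-1}$, and combining this with $\rho^{-1}(G\otimes\C_{X_{sa}}^\hbar/\hbar^j\C_{X_{sa}}^\hbar)\simeq(\rho^{-1}G)\otimes\C_X^\hbar/\hbar^j\C_X^\hbar$ and the preliminary step on $X$ yields $\rho^{-1}(G^{R\hbar})\simeq R{\varprojlim}_j\big((\rho^{-1}G)\otimes\C_X^\hbar/\hbar^j\C_X^\hbar\big)\simeq(\rho^{-1}G)^{R\hbar}$.

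The delicate point is the preliminary step: establishing on the subanalytic site that $(\cdot)^{R\hbar}$ is computed by the naive projective limit on $\mathcal I$-acyclic resolutions and that the towers $(\mathcal N^p\otimes\C^\hbar/\hbar^j\C^\hbar)_j$ have no $R^1{\varprojlim}$. This is precisely the content imported from \cite{DGS} and recorded in Proposition \ref{P:4}; granting it, the rest is formal, the only site-specific ingredient being the existence of the extra left adjoint $\rho_!$, which is what permits the inverse image functor $\rho^{-1}$ to commute with the projective limits built into $(\cdot)^{R\hbar}$.
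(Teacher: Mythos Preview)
The paper does not give its own proof of this lemma: it is stated as one of the ``properties of the functor $(\cdot)^{R\hbar}$ proved in \cite{DGS}'' and simply recalled here. So there is no in-paper argument to compare against; one can only assess whether your reconstruction is sound.

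Your argument is correct and identifies the essential mechanism. The identification $(\cdot)^{R\hbar}\simeq R\varprojlim_j\big((\cdot)\otimes\C^\hbar/\hbar^j\C^\hbar\big)$ follows, as you say, from the exactness of $(\cdot)\otimes\C^\hbar/\hbar^j\C^\hbar$ together with Proposition~\ref{P:4}; the commutation of $R\rho_*$ with $R\varprojlim$ is the standard fact that derived right adjoints commute with homotopy limits. The genuinely site-specific point is part~(i): ordinarily an inverse image functor, being a left adjoint, has no reason to commute with projective limits, and it is precisely the existence of the further left adjoint $\rho_!$ (a feature of the subanalytic setting) that makes $\rho^{-1}$ a \emph{right} adjoint as well, hence limit-preserving; combined with its exactness this yields $\rho^{-1}\circ R\varprojlim\simeq R\varprojlim\circ\rho^{-1}$ via the Milnor two-term model. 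You have singled out exactly this point, which is also what the reference \cite{DGS} exploits. One minor remark: in your justification that the towers $(\mathcal N^p\otimes\C^\hbar/\hbar^j)_j$ are $\varprojlim$-acyclic, surjectivity of transition maps alone is not sufficient for sheaves; what you actually use (and what suffices) is that the terms are $\mathcal I$-acyclic, so that on each $S\in\mathcal I$ the tower of sections satisfies Mittag--Leffler, whence the sheaf-level $R^1\varprojlim$ vanishes.
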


\begin{lemma}\label{L:19}
Given $F\in D^b_{\R-c}(\C_X^{\hbar})$, $F$ is isomorphic to a complex:
\begin{equation}\label{E:21}
0\to \underset{i\in I_1}{\oplus}\C_{U_{1,i}}^{\hbar}\to\cdots\to \underset{i\in I_m}{\oplus}\C_{U_{m,i}}^{\hbar}\to 0,
\end{equation}
for locally finite families $\{U_{j,i}\}_{j,i}$ of relatively compact subanalytic open subsets of $X$.

\end{lemma}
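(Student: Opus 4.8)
The statement is the $\hbar$-analogue of the classical fact (cf. the Appendix of \cite{KS2}, also used in the proof of Lemma \ref{L:I}) that an $\R$-constructible sheaf on $X$ admits a finite resolution by finite direct sums of constant sheaves $\C_U$ on relatively compact subanalytic open sets. The plan is to reduce the problem to that classical statement by analyzing $F$ through the functor $gr_{\hbar}$ and using the cohomological $\hbar$-completeness machinery of Section 2.2. First I would recall that, since $F\in D^b_{\R-c}(\C_X^{\hbar})$, the object $gr_{\hbar}(F)=\C_{X,0}\overset{L}{\otimes}_{\C_X^{\hbar}}F$ lies in $D^b_{\R-c}(\C_X)$ (here $\C_{X,0}=\C_X^{\hbar}/\hbar\C_X^{\hbar}\simeq \C_X$). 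Applying the classical resolution to $gr_{\hbar}(F)$ gives a bounded complex $L^{\bullet}$ of finite direct sums $\oplus_{i\in I_j}\C_{U_{j,i}}$ of constant sheaves on relatively compact subanalytic open subsets, together with a quasi-isomorphism $L^{\bullet}\xrightarrow{\sim} gr_{\hbar}(F)$.

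Next I would lift this resolution to the $\hbar$-level. Set $\widetilde{L}^{\bullet}$ to be the complex obtained from $L^{\bullet}$ by replacing each $\C_{U_{j,i}}$ by $\C_{U_{j,i}}^{\hbar}$ (equivalently, $\widetilde{L}^{\bullet}\simeq (L^{\bullet})^{R\hbar}$, using that each $\C_U$ is $\mathcal{I}$-acyclic for the basis $\mathcal{I}$ of relatively compact subanalytic opens, so that by Proposition \ref{P:4} the naive termwise $\hbar$-extension computes $(\cdot)^{R\hbar}$). The point is to produce a morphism $\widetilde{L}^{\bullet}\to F$ in $D^b(\C_X^{\hbar})$ inducing the given quasi-isomorphism after applying $gr_{\hbar}$. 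Since $\C_{U_{j,i}}^{\hbar}$ is a rather free object with respect to $\hbar$, I would try to construct such a lift degree by degree: the obstruction to lifting a map $\oplus\C_{U_{j,i}}^{\hbar}\to \tau_{\geq j}F$ compatibly with the differential lives in an $\mathrm{Ext}^1$ group, and one kills it using that $F^{R\hbar}\simeq F$ (as $F$ is already a formal extension, being $\R$-constructible over $\C_X^{\hbar}$, hence cohomologically $\hbar$-complete by Proposition \ref{P:123}) together with the adjunction/acyclicity properties. Alternatively, and perhaps more cleanly, I would invoke that $F$, being cohomologically $\hbar$-complete, is determined by $gr_{\hbar}(F)$ in a suitable sense: apply $(\cdot)^{R\hbar}$ to the quasi-isomorphism $L^{\bullet}\xrightarrow{\sim}gr_\hbar(F)$ and compare, using Lemma \ref{L:6} and the completeness of both sides.

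Finally, once a morphism $u:\widetilde{L}^{\bullet}\to F$ with $gr_{\hbar}(u)$ an isomorphism is in hand, I would conclude that $u$ itself is an isomorphism in $D^b(\C_X^{\hbar})$: both source and target are cohomologically $\hbar$-complete (the source because it is a finite complex of objects $\C_U^{\hbar}=\C_U^{R\hbar}$, which are cohomologically $\hbar$-complete by Proposition \ref{P:123}; the target as noted above), so $\mathrm{cone}(u)$ is cohomologically $\hbar$-complete, and since $gr_{\hbar}(\mathrm{cone}(u))=\mathrm{cone}(gr_{\hbar}(u))=0$, Proposition \ref{P121} forces $\mathrm{cone}(u)=0$. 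This gives the desired isomorphism $F\simeq\widetilde{L}^{\bullet}$, which is exactly a complex of the form \eqref{E:21}.

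\textbf{Main obstacle.} The delicate point is the second step: producing an actual morphism of complexes (or a morphism in the derived category) $\widetilde{L}^{\bullet}\to F$ lifting the chosen quasi-isomorphism on $gr_{\hbar}$, rather than merely knowing the two objects become isomorphic after $gr_{\hbar}$. Conservativity of $gr_{\hbar}$ (Proposition \ref{P121}) tells us a morphism inducing an isomorphism is an isomorphism, but it does not by itself manufacture the morphism; one genuinely needs the $\hbar$-adic lifting argument, controlling the successive obstructions in $\mathrm{Ext}^1$ via the completeness of $F$ and the (derived) freeness of the $\C_U^{\hbar}$. I expect this to require either an explicit inductive construction over the tower $F\otimes\C_X^{\hbar}/\hbar^k\C_X^{\hbar}$ together with a $\varprojlim$ passage (invoking Proposition \ref{P:4}-type acyclicity so that $R\varprojlim$ reduces to $\varprojlim$ on the relevant terms), or a more formal argument via Lemma \ref{L:5} and Lemma \ref{L:6} identifying $F$ with $(gr_{\hbar}F)$ suitably extended — but one must be careful, since $gr_{\hbar}$ is not fully faithful, so "suitably extended" has to be made precise through the resolution rather than through $gr_{\hbar}F$ alone.
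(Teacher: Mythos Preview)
The paper does not prove this lemma; it is stated in the review Section 2.2 as a known fact drawn from \cite{DGS} (and ultimately from the almost-free resolution machinery of the Appendix of \cite{KS2}, which works over any Noetherian base ring of finite global dimension such as $\C^{\hbar}$). The argument there is a \emph{direct} construction: one builds an epimorphism $\bigoplus_i \C^{\hbar}_{U_i}\twoheadrightarrow H^{\mathrm{top}}(F)$ from a locally finite family of relatively compact subanalytic opens, lifts it to a morphism into $F$, replaces $F$ by the shifted cone, and iterates; termination is guaranteed by the boundedness of $F$ together with the fact that $\C^{\hbar}$ has global dimension $1$. No passage through $gr_{\hbar}$ is involved.

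Your route through $gr_{\hbar}$ is different, and the obstacle you flag is genuine and, as written, not overcome. Concretely, take $F=\C_X^{\hbar}/\hbar\C_X^{\hbar}$ (with $X$ connected, say). Then $gr_{\hbar}(F)\simeq \C_X\oplus \C_X[1]$, so the classical resolution gives $L^{\bullet}=[\C_X\xrightarrow{0}\C_X]$, and your $\widetilde{L}^{\bullet}=(L^{\bullet})^{R\hbar}=[\C_X^{\hbar}\xrightarrow{0}\C_X^{\hbar}]\simeq \C_X^{\hbar}\oplus\C_X^{\hbar}[1]$, which is \emph{not} isomorphic to $F$. The correct almost-free resolution of $F$ is $[\C_X^{\hbar}\xrightarrow{\hbar}\C_X^{\hbar}]$: the differential must be genuinely deformed, not merely lifted along $\C\hookrightarrow\C^{\hbar}$. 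So neither of your two sub-strategies works as stated: applying $(\cdot)^{R\hbar}$ to $L^{\bullet}\to gr_{\hbar}(F)$ lands in $(gr_{\hbar}F)^{R\hbar}$, not in $F$; and the ``degree-by-degree lift'' would have to produce new differentials, which is a deformation-theoretic problem you do not solve. There is also a circularity in your appeal to Proposition \ref{P:123} to conclude that $F$ is cohomologically $\hbar$-complete: that proposition concerns objects of the form $G^{R\hbar}$ with $G\in D^b(\C_X)$, and knowing that $F$ is of this form is essentially the content of the lemma you are trying to prove. (One can establish cohomological $\hbar$-completeness of $\R$-constructible $\C^{\hbar}$-sheaves independently, via their stalks being finitely generated over the complete local ring $\C^{\hbar}$, but you would need to supply that argument.) In short, the direct construction is both simpler and avoids these pitfalls.
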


\begin{lemma}\label{L:h}
For $F\in D^b_{\R-c}(\C_X)$, we have $F^{R\hbar}\simeq F^{\hbar}\simeq \C_X^{\hbar}\otimes F$.
\end{lemma}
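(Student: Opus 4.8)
The plan is to reduce to the two special cases $F = \C_U$ for $U$ relatively compact subanalytic open (where we can compute directly) and then propagate along the finite resolution provided by Lemma \ref{L:19}, exactly as in the proof of Lemma \ref{L:I}. First I would establish the last isomorphism $F^{R\hbar}\simeq \C_X^{\hbar}\otimes F$: since $F$ is $\R$-constructible, it admits (by the Appendix of \cite{KS2}, as recalled in Lemma \ref{L:I}) a finite resolution by finite direct sums $\oplus_{i\in I_j}\C_{U_{j,i}}$ with the $U_{j,i}$ relatively compact subanalytic open. Tensoring by $\C_X^\hbar$ is exact, so it suffices to treat $F=\C_U$; and for $F=\C_U$ one has $\C_X^\hbar\otimes\C_U\simeq (\C_X^\hbar)_U$, which is manifestly $\hbar$-complete termwise because $(\C_X^\hbar)_U/\hbar^j \simeq (\C_X^\hbar/\hbar^j)_U$ and the inverse limit over $j$ is exact on this (essentially constant modulo the filtration) system. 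Hence $\C_X^\hbar\otimes\C_U$ is $\hbar$-complete, so it is isomorphic to its own formal extension $(\C_X^\hbar\otimes\C_U)^\hbar$.

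Next I would identify $(\C_U)^\hbar$ with $\C_X^\hbar\otimes\C_U$. By definition $(\C_U)^\hbar=\varprojlim_j(\C_U\otimes\C_X^\hbar/\hbar^j\C_X^\hbar)=\varprojlim_j(\C_X^\hbar/\hbar^j)_U$. The transition maps of this projective system are (sheaf-theoretically) surjective with kernels $(\hbar^j\C_X^\hbar/\hbar^{j+1}\C_X^\hbar)_U\simeq(\C_X)_U$, so the system satisfies Mittag-Leffler and $R^1\varprojlim$ vanishes; therefore the naive inverse limit already computes the derived one, giving $(\C_U)^\hbar\simeq(\C_U)^{R\hbar}$. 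Moreover the natural map $\C_X^\hbar\otimes\C_U\to(\C_U)^\hbar$ induces, modulo each $\hbar^j$, the identity on $(\C_X^\hbar/\hbar^j)_U$, hence is an isomorphism of the $\hbar$-complete objects. This gives $(\C_U)^{R\hbar}\simeq(\C_U)^\hbar\simeq\C_X^\hbar\otimes\C_U$ for the building blocks.

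Finally I would globalize. The functor $(\cdot)^{R\hbar}$ is a (triangulated) right-derived functor, hence sends the finite resolution $F\simeq[\oplus_{I_1}\C_{U_{1,i}}\to\cdots\to\oplus_{I_m}\C_{U_{m,i}}]$ to the totalization of the term-by-term formal extensions; here one must note that $(\cdot)^{R\hbar}$ commutes with the \emph{finite} direct sums appearing (finite direct sums are finite colimits, hence also finite limits, and $\varprojlim_j$ commutes with them, while the derived version is handled by the $R^1\varprojlim=0$ remark above applied termwise). Comparing with $\C_X^\hbar\otimes F\simeq[\oplus_{I_1}(\C_X^\hbar\otimes\C_{U_{1,i}})\to\cdots]$ — using exactness of $\C_X^\hbar\otimes(-)$ — and with $F^\hbar$ computed the same way, the three complexes are quasi-isomorphic, yielding $F^{R\hbar}\simeq F^\hbar\simeq\C_X^\hbar\otimes F$. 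The main obstacle is the bookkeeping at the level of derived categories: one has to be careful that applying $(\cdot)^{R\hbar}$ to the resolution is legitimate (i.e.\ that the terms $\oplus_{I_j}\C_{U_{j,i}}$ lie in a subcategory on which $(\cdot)^\hbar$ is exact and computes $(\cdot)^{R\hbar}$, which is exactly what Proposition \ref{P:4} gives once we check these sheaves are $\mathcal{I}$-acyclic for $\mathcal{I}$ the subanalytic compact subsets), and that the comparison maps between the three complexes are compatible with the differentials; the pointwise computations themselves are routine.
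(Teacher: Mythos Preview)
The paper does not supply its own proof of this lemma: it is grouped with Lemmas \ref{L:6} and \ref{L:19} as results recalled from \cite{DGS}. So there is nothing to compare against directly; I can only assess your argument on its own merits.

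Your strategy---reduce to the building blocks $\C_U$ via an almost-free resolution, verify the three objects agree for $\C_U$, then propagate using that the resolution consists of $(\cdot)^\hbar$-acyclic objects via Proposition~\ref{P:4}---is the right one. Two points need correction.

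First, the basis $\mathcal{I}$ you propose for Proposition~\ref{P:4}, namely subanalytic compact subsets, does not work: if $K$ is a circle contained in $U$ then $H^1(K;\C_U)=H^1(K;\C)=\C\neq 0$, so $\C_U$ is not $\mathcal{I}$-acyclic for this $\mathcal{I}$. The correct move is, for a fixed $F$ together with its chosen resolution, to take a simplicial triangulation refining all the $U_{j,i}$ simultaneously; then the open stars $U(\sigma)$ form a basis of $X$, and Proposition~8.1.4 of \cite{KS1} gives $R\Gamma(U(\sigma);\C_{U_{j,i}})\simeq(\C_{U_{j,i}})_x$ for $x\in|\sigma|$, so each term of the resolution is acyclic on this basis. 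This basis depends on $F$, but that is harmless since Proposition~\ref{P:4} is applied one $F$ at a time.

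Second, two smaller issues: the direct sums in the resolution are only \emph{locally} finite, not finite, so you should say explicitly that the statement is local and hence one may restrict to a relatively compact open where the sums become finite. And the Mittag-Leffler sentence (``sheaf-theoretic surjectivity of transition maps $\Rightarrow R^1\varprojlim=0$'') is not valid for projective systems of sheaves in general; since you already identify Proposition~\ref{P:4} as the rigorous justification, simply drop that heuristic.
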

Recall that a resolution of $F$ as in Lemma \ref{L:19} is called an ``almost free" resolution.

The sheaves $\mathcal{C}_{X_{sa}}^{\infty,t,\hbar}, \mathcal{D}b_{X_{sa}}^{t,\hbar}$ and $\sho_{X_{sa}}^{t,\hbar}$ on $X_{sa}$ were studied in \cite{DGS}, and proved to be cohomologically $\hbar$-complete.
The authors also introduced the formal extension of $t\shh om(\cdot, \sho_X)$, the functor of  tempered holomorphic  cohomology, and noted it $TH_{\hbar}(\cdot): D^{b}_{\R-c}(\C^{\hbar}_X)\to D^b(\shd_X)$ by setting
$$F\to TH_{\hbar}(F):=\rho^{-1}R\shh om_{\C^{\hbar}_{X_{sa}}}(\rho_*F, \sho_{X_{sa}}^{t,\hbar}),$$
hence, for any $F\in D^b_{\R-c}(\C^{\hbar}_X)$,  $TH_{\hbar}(F)$ is  cohomologically $\hbar$-complete.
\section{Topological Duality}
 \hspace*{\parindent}We say that a (real or complex) topological vector space is of type FN if it is Fr\'{e}chet nuclear and we use the notation DFN for the strong dual of a Fr\'{e}chet nuclear space. Moreover, we shall say that two complexes $V^\bullet$ and $W^\bullet$ of topological vector spaces of type FN and DFN, respectively, are dual to each other if each entry $W^{-i}$ of $W^\bullet$ is the topological dual of the entry $V^i$ of $V^\bullet$ and the morphism $w^i: W^{-i-1}\to W^i$ is the transpose of $v^i: V^i\to V^{i+1}$.

 Noticing that $\C[[\hbar]]$ is a FN topological $\C$-vector space, it is well known that  $\C[[\hbar]]$ and the quotient $\C((\hbar))/\C[[\hbar]]$ are in perfect duality, the duality being given by $\langle f, g\rangle=Res_{\hbar=0}(f g).$
Namely, given $V$ a FN topological $\C$-vector space, $V^{\hbar}$, being a product, is still FN. In addition, noting $V^*$ its strong topological dual, since the topological dual of a countable direct sum is  the product of the topological duals,  $V^{\hbar}$ and $V^*\otimes \C((\hbar))/\C[[\hbar]]$ are topologically dual to each other, the last one being a DFN space.

\section{Formal extension of the Whitney functor}

\hspace*{\parindent}Let $X$ be a real analytic manifold.
 \begin{definition}\label{D:7}
The sheaf of formal Whitney $\mathcal{C}_X^{\infty}$ functions is the object of $\text{Mod}(\C_{X_{sa}}^{\hbar})$ given by
$$\mathcal{C}_{X_{sa}}^{\infty,\text{w},\hbar}:=(\mathcal{C}_{X_{sa}}^{\infty,\text{w}})^\hbar.$$
\end{definition}

Recall that $R\Gamma(U; \mathcal{C}_{X_{sa}}^{\infty,\text{w}})\simeq R\Gamma(X;D'\C_U\overset{\text{w}}{\otimes}\mathcal{C}_{X}^{\infty})$, for all open subanalytic subsets $U$ of $X$ (see \cite{L}). Therefore, for those $U$ such that $D'\C_U$ is concentrated in degree zero (for instance the so called locally cohomologically trivial (l.c.t) subanalytic open subsets),  $ D'\C_U\overset{\text{w}}{\otimes}\mathcal{C}_{X}^{\infty}$ being a soft sheaf,  $R\Gamma(U; \mathcal{C}_{X_{sa}}^{\infty,\text{w}}) $ is concentrated in degree zero.  Since l.c.t. open subanalytic sets form a basis for the site $X_{sa}$,   we have, by Proposition \ref{P:4}:

\begin{equation}\label{E:1}
\mathcal{C}_{X_{sa}}^{\infty,\text{w},\hbar}\simeq (\mathcal{C}_{X_{sa}}^{\infty,\text{w}})^{R\hbar}.
\end{equation}

Therefore, since formal extensions are cohomologically $\hbar$-complete (see Proposition 2.2 of \cite {DGS}), $\mathcal{C}_{X_{sa}}^{\infty,\text{w},\hbar}$ is cohomologically $\hbar$-complete.

Let  $D_{\hbar}'$ denote the functor $\text{D}^b(\C_X^\hbar)^{\text{op}}\to\text{D}^b(\C_X^\hbar), \ F\mapsto \text{R}\mathcal{H}{om}_{\C_X^\hbar}(F, \C_X^\hbar)$.
\begin{definition}\label{D1}
We  define the ``formal extension of Whitney functor"
$$(\cdot)\overset{\text{w},\hbar}{\otimes}\mathcal{C}_X^\infty: D^b_{\R-c}(\C_X^{\hbar})\to D^b(\C^{\hbar}_X),$$
 as the composition of derived functors  $$ F\mapsto F\overset{\text{w},\hbar}{\otimes}\mathcal{C}_X^\infty:= \rho^{-1}R\mathcal{H}{om}_{\C_{X_{sa}}^{\hbar}}(\rho_* (D_{\hbar}'F),\mathcal{C}_{X_{sa}}^{\infty,\text{w},\hbar}).$$
\end{definition}
Since  $\mathcal{C} ^{\infty, \text{w}} _{X_{sa}}$ belongs to $\text{Mod}(\rho_!\shd_X)$, the functor $\rho^{-1}$ commutes with $(\cdot)^{R\hbar}$ and $\rho^{-1}\circ \rho_!\simeq \text{id}$, $ F\overset{\text{w},\hbar}{\otimes}\mathcal{C}_X^\infty$ is an object of $D^b(\shd_X^{\hbar})$, in other words, $(\cdot)\overset{\text{w},\hbar}{\otimes}\mathcal{C}_X ^\infty$ is a functor from $\text{D}^b_{\R-c}(\C_X^{\hbar})$ to $D^b(\shd_X ^{\hbar}).$ Moreover, $F\overset{\text{w},\hbar}{\otimes}\mathcal{C}_X^\infty$ is cohomologically $\hbar$-complete.
\begin{lemma}\label{L:18}
Let $F\in \mathrm{Mod}_{\R-c}(\C_X)$. Then

$$F^{\hbar}\overset{\mathrm{w},\hbar}{\otimes}\mathcal{C}_X^{\infty}\simeq (F\overset{\mathrm{w}}{\otimes}\mathcal{C}_X^{\infty})^{\hbar}$$ in $\mathrm{Mod}(\shd_X^{\hbar})$, hence it is concentrated in degree zero and is a soft sheaf.

\end{lemma}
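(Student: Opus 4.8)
The plan is to unwind the definition of $F^{\hbar}\overset{\mathrm{w},\hbar}{\otimes}\mathcal{C}_X^{\infty}$ and then transport the derived formal extension $(\cdot)^{R\hbar}$ successively through $D_{\hbar}'$, $\rho_*$, $R\mathcal{H}om$ and $\rho^{-1}$ by means of Lemmas \ref{L:5}, \ref{L:6}, \ref{L:h} and of \eqref{E:1}, replacing at the very end $(\cdot)^{R\hbar}$ by $(\cdot)^{\hbar}$ via Proposition \ref{P:4}. Concretely, the first step is to identify $D_{\hbar}'(F^{\hbar})$ with the formal extension of $D'F$. Since $F$ is $\R$-constructible, so is the complex $D'F=R\mathcal{H}om_{\C_X}(F,\C_X)$, and Lemma \ref{L:h} gives $F^{\hbar}\simeq F^{R\hbar}\simeq\C_X^{\hbar}\otimes_{\C_X}F$ with the tensor product underived; the adjunction between extension and restriction of scalars then yields $R\mathcal{H}om_{\C_X^{\hbar}}(F^{\hbar},\C_X^{\hbar})\simeq R\mathcal{H}om_{\C_X}(F,\C_X^{\hbar})$, and Lemma \ref{L:5} (applied with $\mathcal{R}=\C_X$, noting $\C_X^{R\hbar}\simeq\C_X^{\hbar}$) rewrites the latter as $R\mathcal{H}om_{\C_X}(F,\C_X)^{R\hbar}=(D'F)^{R\hbar}$. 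Applying $\rho_*$, which is exact on $\R$-constructible sheaves and hence computes $R\rho_*$ on $D'F$, and invoking Lemma \ref{L:6}(ii), I obtain $\rho_*\bigl(D_{\hbar}'(F^{\hbar})\bigr)\simeq(\rho_* D'F)^{R\hbar}$.

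Next I compute $R\mathcal{H}om_{\C_{X_{sa}}^{\hbar}}\bigl((\rho_* D'F)^{R\hbar},\mathcal{C}_{X_{sa}}^{\infty,\mathrm{w},\hbar}\bigr)$. Replacing $\mathcal{C}_{X_{sa}}^{\infty,\mathrm{w},\hbar}$ by $(\mathcal{C}_{X_{sa}}^{\infty,\mathrm{w}})^{R\hbar}$ through \eqref{E:1}, using the base-change adjunction to descend from $\C_{X_{sa}}^{\hbar}$ to $\C_{X_{sa}}$ in the first argument (for which one needs $(\rho_* D'F)^{R\hbar}\simeq\C_{X_{sa}}^{\hbar}\overset{L}{\otimes}_{\C_{X_{sa}}}\rho_* D'F$), and then Lemma \ref{L:5} once more to pull $(\cdot)^{R\hbar}$ outside, this $R\mathcal{H}om$ becomes $R\mathcal{H}om_{\C_{X_{sa}}}(\rho_* D'F,\mathcal{C}_{X_{sa}}^{\infty,\mathrm{w}})^{R\hbar}$. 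Applying $\rho^{-1}$ and Lemma \ref{L:6}(i), and recalling the formula $F\overset{\mathrm{w}}{\otimes}\mathcal{C}_X^{\infty}\simeq\rho^{-1}R\mathcal{H}om_{\C_{X_{sa}}}(D'F,\mathcal{C}_{X_{sa}}^{\infty,\mathrm{w}})$ from Section 2 (together with the identification $D'F\simeq\rho_* D'F$), I conclude $F^{\hbar}\overset{\mathrm{w},\hbar}{\otimes}\mathcal{C}_X^{\infty}\simeq(F\overset{\mathrm{w}}{\otimes}\mathcal{C}_X^{\infty})^{R\hbar}$.

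It remains to pass from $(\cdot)^{R\hbar}$ to $(\cdot)^{\hbar}$ and to read off the last two assertions. Because $F$ is $\R$-constructible, $F\overset{\mathrm{w}}{\otimes}\mathcal{C}_X^{\infty}$ is concentrated in degree zero and, being a $\mathcal{C}_X^{\infty}$-module, is a soft sheaf; soft sheaves are acyclic on compact subsets, hence $\mathcal{I}$-acyclic for $\mathcal{I}$ the basis of compact subsets of $X$, so Proposition \ref{P:4} gives $(F\overset{\mathrm{w}}{\otimes}\mathcal{C}_X^{\infty})^{R\hbar}\simeq(F\overset{\mathrm{w}}{\otimes}\mathcal{C}_X^{\infty})^{\hbar}$, which yields the claimed isomorphism. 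Finally $(F\overset{\mathrm{w}}{\otimes}\mathcal{C}_X^{\infty})^{\hbar}$ is concentrated in degree zero (the preceding identifications leave no higher $(\cdot)^{R\hbar}$-cohomology), and it is soft because it is a module over the sheaf of rings $\mathcal{C}_X^{\infty,\hbar}:=(\mathcal{C}_X^{\infty})^{\hbar}$, which is itself soft and admits partitions of unity (any partition of unity in $\mathcal{C}_X^{\infty}$, regarded as $\hbar$-independent).

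The main obstacle is the ring-change step of the second paragraph: one must be sure that $R\mathcal{H}om_{\C_{X_{sa}}^{\hbar}}$ with formally extended first argument genuinely descends to $R\mathcal{H}om_{\C_{X_{sa}}}$, i.e. that $(\rho_* D'F)^{R\hbar}\simeq\C_{X_{sa}}^{\hbar}\overset{L}{\otimes}_{\C_{X_{sa}}}\rho_* D'F$, together with the companion point that $\rho_*$ is exact on $\R$-constructible objects over $\C_X^{\hbar}$ (so that $\rho_*(D_{\hbar}'(F^{\hbar}))$ may legitimately be treated as $R\rho_*$). Both can be handled by reducing, through an almost free resolution (Lemma \ref{L:19}) and Lemma \ref{L:I}, to the explicit case $F=\C_U$ with $U$ relatively compact subanalytic, where $\rho_*(\C_U^{\hbar})\simeq(\rho_*\C_U)^{\hbar}$ follows from $\rho_*$ commuting with the inverse limits defining $(\cdot)^{\hbar}$, and every object in sight is computed directly. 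Everything else is bookkeeping with the commutation results already at our disposal.
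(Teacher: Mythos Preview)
Your proof is correct and is precisely a detailed unfolding of the paper's one-line argument, which merely cites \eqref{E:1}, Lemmas \ref{L:5} and \ref{L:6}, and Proposition \ref{P:4}; you invoke exactly these ingredients (with Lemma \ref{L:h} as the natural auxiliary for the $D_{\hbar}'$ step) and correctly identify and dispatch the only nontrivial point, namely the ring-change on $X_{sa}$, by reduction to $\C_U^{\hbar}$ via Lemmas \ref{L:19} and \ref{L:I}.
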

\begin{proof}
The result follows  from (\ref{E:1}),  Lemmas \ref{L:5} and \ref{L:6} and Proposition \ref{P:4}.
 \end{proof}

\begin{remark}\label{L:141}

Given $F\in \text{D}^b_{\R-c}(\C_X^{\hbar})$, choosing a resolution $F^{\bullet}$ of $F$ as in (\ref{E:21}), that is, such that   each entry $F^i$ is isomorphic to a locally finite sum of $\mathcal{C}_X^\hbar$-modules of the form $ \C_U^{\hbar}$, with $U\in\Op$, we obtain that $F\overset{\text{w},\hbar}{\otimes}\mathcal{C}_X^\infty$ is isomorphic to a bounded complex $F^\bullet\overset{\text{w},\hbar}{\otimes}\mathcal{C}_X^\infty$ which provides a soft resolution of  $F\overset{\text{w},\hbar}{\otimes}\mathcal{C}_X^\infty$.
Therefore, for any  open subanalytic set $U$ in $X$
 $$R\Gamma(U; F\overset{\text{w},\hbar}{\otimes}\mathcal{C}_X^\infty)\simeq \Gamma(U;  F^{\bullet}\overset{\text{w},\hbar}{\otimes}\mathcal{C}_X^\infty).$$
\end{remark}

When $X$ is a complex analytic manifold, denoting by $\overline{X}$ the complex conjugate manifold of $X$ and by $X^{\R}$ the underlying real analytic manifold identified with the diagonal of $X\times \overline{X}$, we may also define the $\hbar$-version of the sheaf   $\sho_X^{\text{w}}$  on $X_{sa}$, by setting:
\begin{definition}\label{D:8}

$$\sho_{X_{sa}}^{\text{w},\hbar}:=R\mathcal{H}{om}_{\rho_!\shd_{\overline{X}}}(\rho_!\sho_{\overline{X}},\mathcal{C}_{X^\R_{sa}}^{\infty,\text{w},\hbar}).$$
\end{definition}

As a consequence of (\ref{E:1}) together with Lemma \ref{L:5} we have:
\begin{equation}\label{E:10}
\sho_{X_{sa}}^{\text{w},\hbar}\simeq(\sho_{X_{sa}}^{\text{w}})^{R\hbar}.
\end{equation}

We now introduce the formal extension of the holomorphic Whitney functor, $(\cdot)\overset{\text{w},\hbar}{\otimes}\mathcal{O}_X: \text{D}^b_{\R-c}(\C_X^\hbar)\to \text{D}^b(\shd_X^\hbar)$, by setting: $$F\overset{\text{w},\hbar}{\otimes}\mathcal{O}_X:= \rho^{-1}R\mathcal{H}{om}_{\C_{X_{sa}}^{\hbar}}(\rho_* (D_{\hbar}'F),\mathcal{O}_{X_{sa}}^{\text{w},\hbar})\simeq R\mathcal{H}{om}_{\rho_!\shd_{\overline{X}}}(\rho_!\sho_{\overline{X}}, F\overset{\text{w},\hbar}{\otimes}\mathcal{C}_{X^\R_{sa}}^{\infty}).$$
Therefore, for any $F\in D^b_{\R-c}(\C^{\hbar}_X), F\overset{\text{w},\hbar}{\otimes}\mathcal{O}_X$ is cohomologically $\hbar$-complete. Moreover, by Lemmas  \ref{L:5} and \ref{L:6} we get
\begin{corollary}\label{R:8}
Let $F\in D^b_{\R-c}(\C_X)$. Then
$$F^{\hbar}\overset{\mathrm{w},\hbar}{\otimes}\sho_X\simeq (F\overset{\mathrm{w}}{\otimes}\sho_X)^{R\hbar}$$ in $D^b(\shd_X^{\hbar}).$

\end{corollary}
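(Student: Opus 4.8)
\emph{Sketch of the argument.} The plan is to deduce the statement from its Whitney $\mathcal{C}^\infty$ counterpart, Lemma \ref{L:18}, using the realization of the holomorphic Whitney functor as an $R\shh om$ over $\shd_{\overline X}$ (whose $\hbar=0$ and $\hbar$-versions are recorded just before the statement), and then moving $(\cdot)^{R\hbar}$ through that $R\shh om$ by means of Lemma \ref{L:5}. Lemma \ref{L:6} serves only to pass freely between $X$ and the subanalytic site while keeping track of $(\cdot)^{R\hbar}$.

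First I would reduce to the case $F\in\mathrm{Mod}_{\R-c}(\C_X)$: a general object of $D^b_{\R-c}(\C_X)$ admits a bounded resolution by $\R$-constructible sheaves, and all the functors in play are triangulated, so one argues termwise. For $F$ an $\R$-constructible sheaf, the second expression for $(\cdot)\overset{\text{w},\hbar}{\otimes}\sho_X$ given above, applied to $F^\hbar$ and pushed down by $\rho^{-1}$ (using $\rho^{-1}\rho_!\simeq\mathrm{id}$), reads
$$F^\hbar\overset{\text{w},\hbar}{\otimes}\sho_X\ \simeq\ R\shh om_{\shd_{\overline X}}\bigl(\sho_{\overline X},\ F^\hbar\overset{\text{w},\hbar}{\otimes}\mathcal{C}_{X^\R}^\infty\bigr)$$
on $X^\R$. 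By Lemma \ref{L:18} (applied to the real analytic manifold $X^\R$) the coefficient object is $\bigl(F\overset{\text{w}}{\otimes}\mathcal{C}_{X^\R}^\infty\bigr)^\hbar$, and since $F\overset{\text{w}}{\otimes}\mathcal{C}_{X^\R}^\infty$ is a soft sheaf concentrated in degree zero it is $(\cdot)^\hbar$-acyclic by Proposition \ref{P:4}, so this equals $\bigl(F\overset{\text{w}}{\otimes}\mathcal{C}_{X^\R}^\infty\bigr)^{R\hbar}$. Substituting and applying Lemma \ref{L:5} with $\mathcal{R}=\shd_{\overline X}$, $\mathcal{M}=\sho_{\overline X}$ and $\mathcal{N}=F\overset{\text{w}}{\otimes}\mathcal{C}_{X^\R}^\infty$ gives
$$F^\hbar\overset{\text{w},\hbar}{\otimes}\sho_X\ \simeq\ R\shh om_{\shd_{\overline X}}\bigl(\sho_{\overline X},\ F\overset{\text{w}}{\otimes}\mathcal{C}_{X^\R}^\infty\bigr)^{R\hbar}\ \simeq\ \bigl(F\overset{\text{w}}{\otimes}\sho_X\bigr)^{R\hbar},$$
the last isomorphism being the $\hbar=0$ case of the same description, i.e.\ the classical identity of \cite{KS2}. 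Lemma \ref{L:6}(i) is used along the way so that the $\rho^{-1}$ implicit in these descriptions commutes with $(\cdot)^{R\hbar}$, which makes all of the above hold in $D^b(\shd_X^\hbar)$. (Alternatively one can stay on $X_{sa}$ throughout, starting from Definition \ref{D:8}, and use $\sho_{X_{sa}}^{\text{w},\hbar}\simeq(\sho_{X_{sa}}^{\text{w}})^{R\hbar}$ together with Lemmas \ref{L:5} and \ref{L:6}(ii).)

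The only point requiring genuine care is the verification of the hypotheses of Lemma \ref{L:5} in this setting: that $\shd_{\overline X}$ is a $\C_X$-algebra and that $\sho_{\overline X}$ admits a finite free resolution over it (the Koszul/de Rham complex), so that $R\shh om_{\shd_{\overline X}}(\sho_{\overline X},\,\cdot)$ is a bounded functor to which the lemma applies; and the bookkeeping of the residual $\shd_X^\hbar$-module structure, which comes from the $\shd_X$-action commuting with the $\shd_{\overline X}$-action over which the $R\shh om$ is taken, and which is plainly compatible with $(\cdot)^{R\hbar}$. I expect no real difficulty beyond this bookkeeping, exactly as in the proof of Lemma \ref{L:18}.
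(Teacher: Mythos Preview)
Your proof is correct and essentially matches the paper's (one-line) argument: the paper derives the corollary directly from $\sho_{X_{sa}}^{\text{w},\hbar}\simeq(\sho_{X_{sa}}^{\text{w}})^{R\hbar}$ together with Lemmas \ref{L:5} and \ref{L:6}, which is precisely the alternative route you mention at the end. Your primary route via Lemma \ref{L:18} and Lemma \ref{L:5} applied to $R\shh om_{\shd_{\overline X}}(\sho_{\overline X},\cdot)$ is just the same commutation of $(\cdot)^{R\hbar}$ with $R\shh om$, carried out on the Dolbeault side rather than on the subanalytic site, so there is no real difference.
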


Recall that one notes by $D^b_{rh}(\shd_X)$  the full triangulated category of $D^b_{coh}(\shd_X)$ of the objects having regular holonomic cohomology and by $D^b_{rh}(\shd^{\hbar}_X)$ the full triangulated subcategory of $D^b_{coh}(\shd^{\hbar}_X)$ of the objects $\shm$ such that $gr_{\hbar}(\shm)\in D^b_{rh}(\shd_X)$.
We obtain a comparison result:
\begin{proposition}\label{P:9}
Let $\shm \in D^b_{rh}(\shd^{\hbar}_X)$ and let $F\in D^b_{\R-c}(\C_X)$. Then, the natural morphism:
\begin{equation}\label{E:14}
R\shh{om}_{\shd_X^{\hbar}}(\shm, (F\otimes \sho_X)^{R\hbar})\to R\shh{om}_{\shd_X^{\hbar}}(\shm, F^{\hbar}\overset{\mathrm{w},\hbar}{\otimes} \sho_X)
\end{equation}
is an isomorphism in $D^b(\C^{\hbar}_X)$.
\end{proposition}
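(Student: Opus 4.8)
The plan is to reduce the assertion, via $gr_{\hbar}$ and the conservativity of Proposition~\ref{P121}, to the classical comparison theorem for regular holonomic $\shd_X$-modules. Write $u\colon F\otimes\sho_X\to F\overset{\text{w}}{\otimes}\sho_X$ for the canonical morphism in $D^b(\shd_X)$. By Corollary~\ref{R:8} the target of (\ref{E:14}) is $R\shh om_{\shd_X^{\hbar}}(\shm,(F\overset{\text{w}}{\otimes}\sho_X)^{R\hbar})$, and, unravelling the definitions of the extension functors, (\ref{E:14}) gets identified with $R\shh om_{\shd_X^{\hbar}}(\shm,u^{R\hbar})$, where $u^{R\hbar}$ is the image of $u$ under the triangulated functor $(\cdot)^{R\hbar}$. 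Hence, setting $C:=\mathrm{Cone}(u)\in D^b(\shd_X)$, so that $\mathrm{Cone}(u^{R\hbar})\simeq C^{R\hbar}$, it suffices to prove that $R\shh om_{\shd_X^{\hbar}}(\shm,C^{R\hbar})=0$.

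First I would note that $C^{R\hbar}$ is cohomologically $\hbar$-complete by Proposition~\ref{P:123}, hence so is $R\shh om_{\shd_X^{\hbar}}(\shm,C^{R\hbar})$ by Proposition~\ref{P122}; by Proposition~\ref{P121} it is then enough to show that $gr_{\hbar}R\shh om_{\shd_X^{\hbar}}(\shm,C^{R\hbar})=0$. To compute this, I would use that $\shm\in D^b_{coh}(\shd_X^{\hbar})$ admits, locally on $X$, a finite resolution by free $\shd_X^{\hbar}$-modules of finite rank: feeding it into $R\shh om_{\shd_X^{\hbar}}(\shm,-)$ and applying $gr_{\hbar}$ (which commutes with the resulting finite direct sums) produces a natural isomorphism $gr_{\hbar}R\shh om_{\shd_X^{\hbar}}(\shm,\mathcal{G})\simeq R\shh om_{\shd_X}(gr_{\hbar}\shm,gr_{\hbar}\mathcal{G})$ for $\mathcal{G}\in D^b(\shd_X^{\hbar})$. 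Applied to $\mathcal{G}=C^{R\hbar}$, together with the identity $gr_{\hbar}(C^{R\hbar})\simeq C$ (a complex computing $C^{R\hbar}$ is $\hbar$-torsion-free, with cokernel of $\hbar$ equal to $C$), this gives $gr_{\hbar}R\shh om_{\shd_X^{\hbar}}(\shm,C^{R\hbar})\simeq R\shh om_{\shd_X}(gr_{\hbar}\shm,C)$.

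It remains to check $R\shh om_{\shd_X}(gr_{\hbar}\shm,C)=0$, i.e.\ that $R\shh om_{\shd_X}(gr_{\hbar}\shm,u)$ is an isomorphism. By the definition of $D^b_{rh}(\shd_X^{\hbar})$ we have $gr_{\hbar}\shm\in D^b_{rh}(\shd_X)$, and for any $\shn\in D^b_{rh}(\shd_X)$ and $F\in D^b_{\R-c}(\C_X)$ the canonical morphism $R\shh om_{\shd_X}(\shn,F\otimes\sho_X)\to R\shh om_{\shd_X}(\shn,F\overset{\text{w}}{\otimes}\sho_X)$ is an isomorphism — this is the classical comparison theorem, which follows from the results of \cite{KS2}. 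This finishes the argument.

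The hardest point will be the middle paragraph, namely establishing $gr_{\hbar}R\shh om_{\shd_X^{\hbar}}(\shm,C^{R\hbar})\simeq R\shh om_{\shd_X}(gr_{\hbar}\shm,C)$ with enough functoriality in the second variable that the morphism induced by $u$ is transported to $R\shh om_{\shd_X}(gr_{\hbar}\shm,u)$; the interplay between $gr_{\hbar}$, $R\shh om_{\shd_X^{\hbar}}(\shm,-)$ and $(\cdot)^{R\hbar}$ is where the bookkeeping lies. Everything else is a formal combination of the propositions recalled above with the Kashiwara--Schapira comparison theorem, and since $R\shh om$ is local the reduction to a local free resolution of $\shm$ is harmless.
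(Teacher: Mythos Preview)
Your argument is correct and follows the same route as the paper: show both sides are cohomologically $\hbar$-complete via Propositions~\ref{P122} and~\ref{P:123}, invoke the conservativity of $gr_{\hbar}$ from Proposition~\ref{P121}, and reduce to the classical comparison theorem of \cite{KS2} for $gr_{\hbar}\shm\in D^b_{rh}(\shd_X)$. The paper's proof is much terser and simply asserts that after applying $gr_{\hbar}$ one lands on Corollary~6.2 of \cite{KS2}; you have spelled out what this entails---the cone reformulation, the identification $gr_{\hbar}(C^{R\hbar})\simeq C$, and the local commutation of $gr_{\hbar}$ with $R\shh om_{\shd_X^{\hbar}}(\shm,-)$ for coherent $\shm$---which is exactly the bookkeeping the paper leaves implicit.
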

\begin{proof}
Since both sides of the morphism (\ref{E:14}) are cohomologically $\hbar$-complete  by Propositions \ref{P122} and \ref{P:123}, by Proposition \ref{P121} it is enough to apply $gr_{\hbar}$ and then the result follows  by Corollary \ref{R:8} and Corollary 6.2 of \cite{KS2} which proves the isomorphism $$R\shh{om}_{\shd_X}(gr_{\hbar}(\shm), F\otimes \sho_X)\simeq R\shh{om}_{\shd_X}(gr_{\hbar}(\shm), F\overset{\text{w}}{\otimes} \sho_X).$$
\end{proof}

Remark that Theorem A.9 of \cite{KS2} entails the existence of almost free resolutions in the framework of coherent $\shd_X^{\hbar}$-modules:
\begin{proposition}\label{P:201}
Let $\shm\in D^b_{coh}(\shd_X^{\hbar})$. Then there exist a family $(U_j)_{j\in J}\in\Op $, a complex $\mathcal{L}^{\bullet}\in D^b_{coh}(\shd_X^{\hbar})$ and  a quasi-isomorphism $\mathcal{L}^{\bullet}\to\shm$, such that:

(i) Each $U_j$ is relatively compact,

(ii) Each entry $\mathcal{L}^i$ of $\mathcal{L}$ is a locally finite direct sum of the form $\underset{j\in J_i}{\oplus} {\shd_X ^{\hbar}}_{U_j}$, where $J=\bigcup_{i}J_i$.
\end{proposition}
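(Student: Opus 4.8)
The plan is to reduce the claim to the corresponding statement for ordinary coherent $\shd_X$-modules, namely Theorem A.9 of \cite{KS2}, by passing to the associated graded object via $gr_{\hbar}$. First I would recall that $\shd_X^{\hbar}$ is $\hbar$-complete and that, since $\shm\in D^b_{coh}(\shd_X^{\hbar})$, the object $gr_{\hbar}(\shm)=\shd_X\overset{L}{\otimes}_{\shd_X^{\hbar}}\shm$ lies in $D^b_{coh}(\shd_X)$. By Theorem A.9 of \cite{KS2} there is a locally finite family $(U_j)_{j\in J}$ of relatively compact subanalytic open subsets, a bounded complex $\mathcal{K}^{\bullet}$ whose entries are locally finite direct sums $\oplus_{j\in J_i}(\shd_X)_{U_j}$, and a quasi-isomorphism $\mathcal{K}^{\bullet}\to gr_{\hbar}(\shm)$. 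The goal is to lift this to the $\hbar$-level.

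The key step is the lifting. Working locally on a relatively compact subanalytic open $V$, one has a surjection $\oplus_{j\in J_{i}}(\shd_X)_{U_j}\to \mathcal{H}^{i}$ in each relevant degree, and one must lift the generators: a generator supported on $U_j$ is the image of a local section of $gr_{\hbar}(\shm)$, which lifts to a section of $\shm$ itself (on a possibly smaller neighbourhood, but the $U_j$ are already suitable since they form the combinatorial data coming from a subanalytic stratification, compatible with taking smaller covers). This produces a morphism $\mathcal{L}^{i}:=\oplus_{j\in J_i}{\shd_X^{\hbar}}_{U_j}\to\shm$ (in the derived category, or on a representative by injectives/flats), and the differentials of $\mathcal{K}^{\bullet}$ lift to differentials of $\mathcal{L}^{\bullet}$ because each $\mathcal{L}^{i}$ is, degreewise, a sum of copies of $\shd_X^{\hbar}$ restricted to opens, i.e.\ a complex of objects that are acyclic for the relevant functors; one adjusts the lifted differentials so that $d\circ d=0$, using that $\hbar$ is not a zero-divisor and an inductive argument in $\hbar$-adic order. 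One then gets a morphism of complexes $\mathcal{L}^{\bullet}\to\shm$.

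Finally I would check that $\mathcal{L}^{\bullet}\to\shm$ is a quasi-isomorphism. By construction $\mathcal{L}^{\bullet}$ is a complex of cohomologically $\hbar$-complete objects (each $ {\shd_X^{\hbar}}_{U_j}\simeq (\C_{U_j})^{R\hbar}\otimes_{\C_X^{\hbar}}\shd_X^{\hbar}$, hence cohomologically $\hbar$-complete by Proposition \ref{P:123} and Lemma \ref{L:5}), so its cone $\shc$ is cohomologically $\hbar$-complete, and likewise on $\shm$ (a coherent $\shd_X^{\hbar}$-module is $\hbar$-complete). By Proposition \ref{P121} it suffices to show $gr_{\hbar}(\shc)=0$, i.e.\ that $gr_{\hbar}(\mathcal{L}^{\bullet})\to gr_{\hbar}(\shm)$ is a quasi-isomorphism. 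Since $gr_{\hbar}({\shd_X^{\hbar}}_{U_j})\simeq (\shd_X)_{U_j}$ and $gr_{\hbar}$ commutes with the (locally finite, bounded) direct sums and with the complex, $gr_{\hbar}(\mathcal{L}^{\bullet})$ is identified with $\mathcal{K}^{\bullet}$, and the induced map is exactly the quasi-isomorphism $\mathcal{K}^{\bullet}\to gr_{\hbar}(\shm)$ we started from. Coherence of $\mathcal{L}^{\bullet}$ in $D^b_{coh}(\shd_X^{\hbar})$ then follows since $gr_{\hbar}$ of it is coherent and it is $\hbar$-complete.

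The main obstacle I anticipate is the lifting step: one must produce the differentials of $\mathcal{L}^{\bullet}$ compatibly (so that one genuinely has a complex, not merely a sequence of maps whose composites vanish only after $gr_{\hbar}$), and control that the combinatorial family $(U_j)$ produced at the $gr_{\hbar}$-level by Theorem A.9 of \cite{KS2} can be used verbatim at the $\hbar$-level. Both are handled by the $\hbar$-adic filtration: $\shd_X^{\hbar}=\varprojlim \shd_X^{\hbar}/\hbar^{n}$, $\shd_X^{\hbar}/\hbar\simeq\shd_X$, and successive obstructions to lifting live in $\hbar^{n}gr_{\hbar}(-)/\hbar^{n+1}$, which vanish because we are lifting something that is already a genuine complex modulo $\hbar$; completeness of everything in sight then assembles the lifts into honest $\shd_X^{\hbar}$-linear data.
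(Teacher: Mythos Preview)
The paper does not give a proof of this proposition at all: the sentence immediately preceding it says ``Remark that Theorem A.9 of \cite{KS2} entails the existence of almost free resolutions in the framework of coherent $\shd_X^{\hbar}$-modules,'' and the proposition is then stated without proof. The point is that Theorem~A.9 of \cite{KS2} is formulated for a general sheaf of rings satisfying mild hypotheses, and $\shd_X^{\hbar}$ satisfies them; so one applies that theorem \emph{directly} to $\shd_X^{\hbar}$ and obtains the almost free resolution in one step, with no passage through $gr_{\hbar}$ and no lifting.

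Your approach is therefore a genuine detour, and the detour has a gap. The lifting step is the problem. You propose to lift the differentials of $\mathcal{K}^{\bullet}$ to differentials of $\mathcal{L}^{\bullet}$ and then ``adjust'' them so that $d\circ d=0$ using an $\hbar$-adic induction, claiming that the successive obstructions vanish ``because we are lifting something that is already a genuine complex modulo $\hbar$.'' That reasoning is circular: being a complex modulo $\hbar$ kills only the first-order obstruction; the higher obstructions live in cohomology groups that you have not identified and have no reason to vanish in general. The standard way to produce a resolution over a complete ring from one over its reduction is not to lift a fixed complex, but to build a new resolution degree by degree (lift a surjection $\mathcal{L}^0\twoheadrightarrow\shm$, take the kernel, check that its reduction agrees with $\ker(\mathcal{K}^0\to gr_{\hbar}\shm)$, and iterate). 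That argument uses projectivity at each stage, and here the terms ${\shd_X^{\hbar}}_{U_j}$ are not projective globally, so even this route needs more care than you indicate. Finally, you assert without justification that an arbitrary $\shm\in D^b_{coh}(\shd_X^{\hbar})$ is cohomologically $\hbar$-complete; this is true but is itself a nontrivial input.

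In short: the paper's route is to observe that the cited theorem already covers $\shd_X^{\hbar}$, which avoids all of these issues entirely. Your reduction to $gr_{\hbar}$ and the conservativity argument (Proposition~\ref{P121}) is a reasonable instinct in this setting, but for this particular statement it is unnecessary, and the lifting you would need to make it work is not the one you sketched.
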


\section{Polynomial extension of tempered cohomology}

\subsection{The functor of polynomial extension}
\hspace*{\parindent}We set $\C^{[\hbar]}:=\C((\hbar))/\C^{\hbar}.$

For $j\in\Z_{>0}$, let us note by $\C^{[\hbar]}_j$ the image of $\C^{\hbar}+\C[\hbar^{-1}]_j$ in the quotient $\C((\hbar))/\C_X^{\hbar}$, where $\C[\hbar^{-1}]_j$ denotes the set of complex polynomials of degree at most $j$ in the $\hbar^{-1}$ variable, in other words, $\C^{[\hbar]}_j=(\hbar^{-j}\C^{\hbar})/\C^{\hbar}$. Hence, as a $\C$-vector space, $\C^{[\hbar]}$ is isomorphic to the polynomial ring $\C[\hbar^{-1}]$. However, as a quotient of $\C((\hbar))$,  $\C^{[\hbar]}$ is not a ring and we shall keep in mind its $\C^{\hbar}$-module structure.

Given a sheaf $F$ of $\C^{\hbar}$-modules on a topological space $X$, or more generally, a sheaf in $\text{Mod}(\C_{X}^{\hbar})$, for a site $X$ (we shall not distinguish these situations, unless otherwise explicited), one sets  $F^{loc}:=F\underset{\C_X^{\hbar}}{\otimes}\C_X((\hbar))$. Clearly $(\cdot)^{loc}$ is an exact functor.

 For $F\in D^{b}(\C_X^{\hbar})$ we set $F^{loc}:=\C_X((\hbar))\otimes_{\C_X^{\hbar}} F$.

\begin{definition}\label{D:10}
Given a sheaf $F$ of $\C^{\hbar}$-modules, we denote by $F^{[\hbar]}$ the sheaf $F\underset{\C_X^{\hbar}}{\otimes}\C_X^{[\hbar]} $.
\end{definition}
Hence $(\cdot)^{[\hbar]}$ defines a right exact functor on $\text{Mod}(\C_{X}^{\hbar})$ and we note $(\cdot)^{L[\hbar]}$ its left derived functor.
By construction $F^{[\hbar]}$ is a $\C_X^{\hbar}$-module with torsion and, when $F$ has no $\hbar$-torsion, $F^{[h]}$ is isomorphic to $F^{loc}/F$. Note that $F^{[\hbar]}$ is, as a sheaf of $\C^{\hbar}$-modules, isomorphic to $\underset{j}{\underset{\rightarrow}{\lim}} F\underset{\C^{\hbar}_X}{\otimes}{\C^{[\hbar]}_X}_j$. We set $F^{[\hbar]}_j:=F\underset{{\C^{\hbar}_X} }{\otimes} {\C^{[\hbar]}_X}_j$.
Hence  $$\Gamma_c(X;F^{[\hbar]})\simeq\underset{j}{\underset{\rightarrow}{\lim}}\Gamma_c(X;F^{[\hbar]}_j).$$ We also have  $(F^{[\hbar]})_U\simeq F_U^{[\hbar]}$, for any open subset $U$ of $X$.

Clearly, for $F\in D^b(\C_X^{\hbar})$ one has a quasi isomorphism $$F^{L[\hbar]}=F\overset{L}{\otimes}_{\C_X^{\hbar}} \C_X^{[\hbar]} \underset{QIS}{\to} \{0\to F\to F^{loc}\to 0\}.$$

 \begin{definition}\label{D:201}We say that $F\in D^b(\C_X^{\hbar})$ is \textit{cohomologically} $\hbar$-\textit{torsion} if $$F^{loc}=0.$$
 \end{definition}
Therefore $F^{L[\hbar]}$ is obviously cohomologically $\hbar$-torsion.
Indeed $$F\overset{L}{\otimes}_{\C_X^{\hbar}} \C_X^{[\hbar]}\otimes_{\C_X^{\hbar}}\C_X((\hbar))=0$$ because $\C_X^{[\hbar]}\otimes_{\C_X^{\hbar}}\C_X((\hbar))=0$.

Let us note $gr_{\hbar}F:=\C_X\overset{L}{\otimes}_{\C_X^{\hbar}} F$.




   \begin{lemma}\label{L:200}  Assume that $gr_{\hbar}=0$ and that $F$ is cohomologically $\hbar$-torsion. Then $F=0$, in other words,
   the functor $gr_{\hbar}$ is conservative on the subcategory of $D^b(\C_X^{\hbar})$ of cohomologically $\hbar$-torsion objects.
   \end{lemma}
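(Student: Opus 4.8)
The plan is to mimic, in the $\hbar$-torsion setting, the standard argument that $\mathrm{gr}_\hbar$ is conservative on cohomologically $\hbar$-complete objects (Proposition \ref{P121}), replacing the role of $\mathcal{R}^{\mathrm{loc}}$ by the functor $(\cdot)^{[\hbar]}$ and using the distinguished triangle
$F\to F^{\mathrm{loc}}\to F^{L[\hbar]}\xrightarrow{+1}$
together with the hypothesis $F^{\mathrm{loc}}=0$, which forces $F\simeq F^{L[\hbar]}[-1]$. So $F$ is, up to shift, of the form $G^{L[\hbar]}$, and one is reduced to understanding objects of that shape. Concretely, since $F^{\mathrm{loc}}=0$, multiplication by $\hbar$ is locally nilpotent on each cohomology sheaf $H^k(F)$, so $H^k(F)=\bigcup_{j\ge 0}\ker(\hbar^j\colon H^k(F)\to H^k(F))$; the claim is that if in addition $\mathrm{gr}_\hbar F=0$ then all these kernels vanish.

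First I would reduce to a bounded complex of sheaves and argue cohomology sheaf by cohomology sheaf, using the spectral sequence (or the truncation triangles) to pass between $\mathrm{gr}_\hbar F$ and $\mathrm{gr}_\hbar$ of the individual $H^k(F)$. The key computation is the long exact sequence obtained by applying $\mathrm{gr}_\hbar=\C_X\overset{L}{\otimes}_{\C_X^\hbar}(\cdot)$, i.e. tensoring the short exact sequence $0\to \C_X^\hbar\xrightarrow{\hbar}\C_X^\hbar\to\C_X\to 0$ against a cohomology module $M:=H^k(F)$. This yields, for each $k$, an exact sequence
$0\to M/\hbar M\to H^{k}(\mathrm{gr}_\hbar F)\to \ker(\hbar\colon M'\to M')\to 0$
up to the usual indexing (where $M'=H^{k+1}(F)$), so the vanishing of $\mathrm{gr}_\hbar F$ gives simultaneously $M=\hbar M$ and that $\hbar$ is injective on $M'$ for every $k$. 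But injectivity of $\hbar$ on every $H^k(F)$, combined with $\hbar$ being locally nilpotent (from $F^{\mathrm{loc}}=0$), forces $H^k(F)=0$: on stalks, a module on which $\hbar$ is both injective and locally nilpotent is zero. Hence $F=0$.

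The main obstacle I anticipate is bookkeeping rather than conceptual: one must be careful that "$\hbar$ locally nilpotent on $H^k(F)$" really follows from $F^{\mathrm{loc}}=0$ — this is because $(\cdot)^{\mathrm{loc}}$ is exact, so $H^k(F)^{\mathrm{loc}}=H^k(F^{\mathrm{loc}})=0$, and $H^k(F)^{\mathrm{loc}}=\underset{\rightarrow}{\lim}(H^k(F)\xrightarrow{\hbar}H^k(F)\xrightarrow{\hbar}\cdots)$ vanishes exactly when every section is killed by some power of $\hbar$ — and that the two conclusions drawn from $\mathrm{gr}_\hbar F=0$ (namely $\hbar$ surjective and $\hbar$ injective on cohomology) are legitimately extracted from the same long exact sequence after sorting out the degree shift. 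Once the stalk-level statement "injective $+$ locally nilpotent $\Rightarrow 0$" is in hand, the rest is formal. Alternatively, and perhaps more cleanly, I could phrase the whole thing as: $F\simeq F^{L[\hbar]}[-1]$, apply $\mathrm{gr}_\hbar$, and observe that $\mathrm{gr}_\hbar$ of $F^{L[\hbar]}$ already determines $\mathrm{gr}_\hbar$ of the pro-object $\{F/\hbar^j F\}$ by a dévissage, so $\mathrm{gr}_\hbar F=0$ plus cohomological $\hbar$-torsion kills each graded piece and hence $F$ itself; this is the dual of the completeness argument and may be the form the authors prefer.
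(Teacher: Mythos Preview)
Your proposal is correct and follows essentially the same route as the paper: reduce to individual cohomology sheaves, use $gr_\hbar F=0$ (i.e.\ the cone of $\hbar\colon F\to F$ vanishes) to see that $\hbar$ acts bijectively on each $H^k(F)$, and combine this with $H^k(F)^{\mathrm{loc}}=H^k(F^{\mathrm{loc}})=0$ to conclude $H^k(F)=0$. The paper's write-up is more direct---it simply observes that $\hbar$ invertible makes $H^k(F)$ a $\C((\hbar))$-module, hence $H^k(F)\simeq H^k(F)^{\mathrm{loc}}=0$---so your opening detour through $F\simeq F^{L[\hbar]}[-1]$ and the closing ``pro-object'' alternative are unnecessary.
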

\begin{proof}
1) Let us start by assuming that $F$ is concentrated in degree $0$. Then, since $\hbar$ is invertible on $F$, $F$ is a $\C_X[\hbar^{-1}]$-module, a fortiori a $\C((\hbar))$-module. It follows that $F\simeq F^{loc}$, hence $F=0$.

2) To treat the general case, since $\shh^j(F^{loc})\simeq \shh^j(F)^{loc}$, the result follows by 1) applied to  $\shh^j(F)$.\end{proof}
The first two following results are clear:

 \begin{lemma}\label{L:203}
 Assume that $F$ is cohomologically $\hbar$-torsion, Then, for any $G\in D^b(\C_X^{\hbar})$, $G\overset{L}{\otimes}_{\C_X^{\hbar}} F$ is cohomologically $\hbar$-torsion.
 \end{lemma}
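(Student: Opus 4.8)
The plan is to deduce the statement directly from the exactness of the localization functor $(\cdot)^{loc}$ and its resulting compatibility with the derived tensor product over $\C_X^{\hbar}$.

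First I would recall that, by definition, $(\cdot)^{loc}$ is given by tensoring with $\C_X((\hbar))$, and that $\C_X((\hbar))$ is a flat $\C_X^{\hbar}$-algebra (it is a localization of $\C_X^{\hbar}$). Consequently the underived and derived tensor products by $\C_X((\hbar))$ agree, and $(\cdot)^{loc}\simeq \C_X((\hbar))\overset{L}{\otimes}_{\C_X^{\hbar}}(\cdot)$ as a functor $D^b(\C_X^{\hbar})\to D^b(\C_X((\hbar)))$. Then, for any $G,F\in D^b(\C_X^{\hbar})$, associativity of the derived tensor product yields a natural isomorphism
$$\bigl(G\overset{L}{\otimes}_{\C_X^{\hbar}}F\bigr)^{loc}\simeq \C_X((\hbar))\overset{L}{\otimes}_{\C_X^{\hbar}}\bigl(G\overset{L}{\otimes}_{\C_X^{\hbar}}F\bigr)\simeq G\overset{L}{\otimes}_{\C_X^{\hbar}}\bigl(\C_X((\hbar))\overset{L}{\otimes}_{\C_X^{\hbar}}F\bigr)\simeq G\overset{L}{\otimes}_{\C_X^{\hbar}}F^{loc}.$$

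Finally, since $F$ is cohomologically $\hbar$-torsion, $F^{loc}=0$, so the right-hand side is $0$; hence $\bigl(G\overset{L}{\otimes}_{\C_X^{\hbar}}F\bigr)^{loc}=0$, which is exactly the assertion that $G\overset{L}{\otimes}_{\C_X^{\hbar}}F$ is cohomologically $\hbar$-torsion. There is no genuine obstacle here: the only point requiring (routine) justification is the displayed compatibility isomorphism, which is purely formal once one observes that $\C_X((\hbar))$ is flat over $\C_X^{\hbar}$, so that $(\cdot)^{loc}$ may be written as a derived tensor product and associativity applies. This is why the statement is flagged as clear in the text.
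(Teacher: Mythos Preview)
Your proposal is correct and matches the paper's treatment: the paper gives no proof at all, simply declaring the result ``clear,'' and your argument---flatness of $\C_X((\hbar))$ over $\C_X^{\hbar}$ together with associativity of the derived tensor product---is precisely the routine verification that justifies this.
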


 \begin{lemma}\label{L:204}
 Given $\shn\in D^b_{coh}(({\shd_X^{\hbar}})^{op})$, and $F\in D^b(\shd_X^{\hbar})$, with $F$ cohomologically $\hbar$-torsion in $D^b(\C^{\hbar})$, then $\shn\overset{L}{\otimes}_{\shd_X^{\hbar}}F$ is cohomologically $\hbar$-torsion.
\end{lemma}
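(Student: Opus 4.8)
The plan is to argue exactly as in the (easy) proof of Lemma \ref{L:203}, the only extra point being that the localization functor $(\cdot)^{loc}$ commutes with $\overset{L}{\otimes}_{\shd_X^{\hbar}}$, not merely with $\overset{L}{\otimes}_{\C_X^{\hbar}}$; this is where the fact that $\hbar$ is a central parameter of $\shd_X^{\hbar}$ enters. First I recall that $(\cdot)^{loc}=\C_X((\hbar))\otimes_{\C_X^{\hbar}}(\cdot)$ is exact, since $\C_X((\hbar))=\C_X^{\hbar,loc}$ is flat over $\C_X^{\hbar}$; in particular it commutes with cohomology, so an object of $D^b(\C_X^{\hbar})$ is cohomologically $\hbar$-torsion as soon as it admits a representative all of whose terms vanish after localization, and likewise a quasi-isomorphism stays a quasi-isomorphism after applying $(\cdot)^{loc}$.

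Next I would choose a resolution $P^{\bullet}\to F$ of $F$ by flat (for instance free) left $\shd_X^{\hbar}$-modules, so that $\shn\overset{L}{\otimes}_{\shd_X^{\hbar}}F\simeq \shn\otimes_{\shd_X^{\hbar}}P^{\bullet}$, and use the termwise identity
$$\bigl(\shn\otimes_{\shd_X^{\hbar}}P^i\bigr)^{loc}\simeq \shn\otimes_{\shd_X^{\hbar}}(P^i)^{loc},$$
which follows from the associativity of the tensor product together with $\shd_X^{\hbar,loc}\simeq \shd_X^{\hbar}\otimes_{\C_X^{\hbar}}\C_X((\hbar))$, a consequence of the centrality of $\hbar$. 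Since $(P^{\bullet})^{loc}$ is again a complex of flat $\shd_X^{\hbar}$-modules — being localizations of free modules, and $\shd_X^{\hbar,loc}$ being flat over $\shd_X^{\hbar}$ (central localization) — which resolves $F^{loc}$, this yields $(\shn\overset{L}{\otimes}_{\shd_X^{\hbar}}F)^{loc}\simeq \shn\overset{L}{\otimes}_{\shd_X^{\hbar}}F^{loc}$. By hypothesis $F$ is cohomologically $\hbar$-torsion, i.e. $F^{loc}=0$, hence $(\shn\overset{L}{\otimes}_{\shd_X^{\hbar}}F)^{loc}=0$, which is the assertion.

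I do not anticipate a genuine obstacle — as the authors remark, the statement is ``clear''. The only point requiring a little care is the interchange of $(\cdot)^{loc}$ with $\overset{L}{\otimes}_{\shd_X^{\hbar}}$, which rests on the exactness of localization at the central element $\hbar$; the coherence of $\shn$ and the finite global dimension of $\shd_X^{\hbar}$ are used only to ensure that $\shn\overset{L}{\otimes}_{\shd_X^{\hbar}}F$ lies in $D^b(\C_X^{\hbar})$ in the first place, and play no role in the torsion computation itself.
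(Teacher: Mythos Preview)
Your argument is correct: the key identity $(\shn\overset{L}{\otimes}_{\shd_X^{\hbar}}F)^{loc}\simeq \shn\overset{L}{\otimes}_{\shd_X^{\hbar}}F^{loc}$ follows from the flatness of $\C_X((\hbar))$ over $\C_X^{\hbar}$ together with the centrality of $\hbar$, exactly as you explain, and then $F^{loc}=0$ finishes it. The paper gives no proof at all (it simply declares the lemma ``clear''), so your write-up is a correct and appropriately detailed elaboration of what the authors leave implicit.
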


\begin{remark}

For any  $F\in \text{Mod}_{\R-c}(\C_X)$, $F^{\hbar}$ is $\hbar$-torsion free, hence $((\cdot)^{\hbar})^{[\hbar]}$ induces an exact functor on $\text{Mod}_{\R-c}(\C_X)$.

\end{remark}

\noindent \textbf{Notation.} For the sake of simplicity, for  $(\cdot)^{\hbar}$-acyclic objects $G\in \text{Mod}(\C_X)$,  $((G)^{\hbar})^{[\hbar]}$ will  be denoted by $G^{[\hbar]}$.

\begin{lemma}\label{L:206}
 Assume that  $H\in D^b(\C_X)$ is $(\cdot)^{\hbar}$-acyclic and that $H^{\hbar}$ is a complex of $\hbar$-torsion free modules. Let $F\simeq H^{[\hbar]}$.
 Then, for any $G\in D^b_{\R-c}(\C_X^{\hbar})$, $R\shh\text{om}_{\C_X^{\hbar}} (G, F)$ is cohomologically $\hbar$-torsion.
 \end{lemma}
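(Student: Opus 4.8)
The plan is to reduce the statement to the case where $G$ is a single ``almost free'' module $\C_U^{\hbar}$ with $U\in\Op$ relatively compact, using Lemma~\ref{L:19}, and then compute directly. Since $R\shh om_{\C_X^{\hbar}}(-,F)$ sends the bounded complex (\ref{E:21}) representing $G$ to a bounded complex whose entries are finite products of the sheaves $R\shh om_{\C_X^{\hbar}}(\C_{U_{j,i}}^{\hbar},F)$, and since the class of cohomologically $\hbar$-torsion objects is triangulated (it is the kernel of the exact functor $(-)^{loc}$) and stable under finite products, it suffices to prove that $R\shh om_{\C_X^{\hbar}}(\C_U^{\hbar},F)$ is cohomologically $\hbar$-torsion for each such $U$.

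\textbf{Key steps.} First I would observe $R\shh om_{\C_X^{\hbar}}(\C_U^{\hbar},F)\simeq R\Gamma_U(F)$, the derived sections with support in the closed complement, or more precisely identify it via the adjunction $R\shh om_{\C_X^{\hbar}}(\C_U^{\hbar},F)\simeq R\shh om_{\C_X}(\C_U,F)$ (extension of scalars), reducing to a computation over $\C_X$ with $F$ viewed as a complex of sheaves of $\C_X$-modules. Next, the localization functor $(-)^{loc}=\C_X((\hbar))\otimes_{\C_X^{\hbar}}(-)$ is exact, so it commutes with taking cohomology sheaves, and it remains to show $\bigl(R^k\shh om_{\C_X}(\C_U,F)\bigr)^{loc}=0$ for all $k$. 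Here I would use that $F\simeq H^{[\hbar]}$ with $H$ being $(-)^{\hbar}$-acyclic and $H^{\hbar}$ a complex of $\hbar$-torsion free modules, so that $F\simeq (H^{\hbar})^{loc}/H^{\hbar}$, fitting into the distinguished triangle $H^{\hbar}\to (H^{\hbar})^{loc}\to F\xrightarrow{+1}$. Applying $R\shh om_{\C_X}(\C_U,-)$ and then $(-)^{loc}$, and using that $(-)^{loc}$ is exact and that $(H^{\hbar})^{loc}$ is already a complex of $\C_X((\hbar))$-modules (so $\bigl((H^{\hbar})^{loc}\bigr)^{loc}\simeq (H^{\hbar})^{loc}$), the triangle forces $F^{loc}$-type vanishing provided one knows that $R\shh om_{\C_X}(\C_U,-)$ interacts well with $(-)^{loc}$ on the torsion-free piece $H^{\hbar}$; this is where Proposition~\ref{P:3} (commutation of $R\shh om$ with filtrant inductive limits, applied to the presentation of $\C_X((\hbar))$ as $\varinjlim \hbar^{-j}\C_X^{\hbar}$) together with Lemma~\ref{L:h} and the $(-)^{\hbar}$-acyclicity of $H$ enters, giving $R\shh om_{\C_X}(\C_U,H^{\hbar})^{loc}\simeq R\shh om_{\C_X}(\C_U,(H^{\hbar})^{loc})$.

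\textbf{Main obstacle.} The delicate point is justifying the commutation of the (derived) Hom out of $\C_U^{\hbar}$ with the localization $(-)^{loc}$ on the torsion-free complex $H^{\hbar}$: one needs $\C_U$ (equivalently $\C_U^{\hbar}$) to be in an appropriate sense ``compact'' or to have a finite resolution by objects for which this holds, so that the infinite direct limit defining $(-)^{loc}=\varinjlim_j \hbar^{-j}(-)$ can be pulled through $R\shh om$. This is supplied by Proposition~\ref{P:3} precisely because $\C_U$ (with $U$ relatively compact subanalytic) lies in $D^b_{\R-c}(\C_X)$, but care is needed because $F=H^{[\hbar]}$ is itself an infinite colimit $\varinjlim_j H^{[\hbar]}_j$ and is not $\R$-constructible; the resolution must be done on the $G$-side, not the $F$-side. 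Once the torsion-free case is settled, the triangle $H^{\hbar}\to (H^{\hbar})^{loc}\to F\to H^{\hbar}[1]$ and exactness of $(-)^{loc}$ close the argument: $R\shh om_{\C_X^{\hbar}}(G,F)^{loc}$ sits in a triangle between two objects that become isomorphic after localization, hence vanishes, so $R\shh om_{\C_X^{\hbar}}(G,F)$ is cohomologically $\hbar$-torsion.
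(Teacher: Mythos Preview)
Your reduction via Lemma~\ref{L:19} to the case $G=\C_U^{\hbar}$ with $U$ relatively compact subanalytic is exactly the paper's first move (the paper phrases it as ``the statement is of local nature, so replace $G$ by an almost free resolution $G^{\bullet}$''; note that locality is what lets you treat the locally finite sums as finite, which you implicitly use when invoking stability under finite products). From that point the two arguments diverge in presentation but rest on the same computation.

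The paper finishes in one line: it asserts directly that $R\shh om_{\C_X^{\hbar}}(G,F)\simeq \shh om(G^{\bullet},H)^{[\hbar]}$, i.e.\ that the derived $\shh om$ is itself a polynomial extension, and hence cohomologically $\hbar$-torsion by the remark after Definition~\ref{D:201}. Unpacking this, the claim is that $\shh om_{\C_X^{\hbar}}(G^{\bullet},-)$ commutes with $(\,\cdot\,)^{[\hbar]}$ on $H^{\hbar}$, which is precisely the commutation of $\shh om$ out of an $\R$-constructible object with the filtered colimit $\varinjlim_j(\,\cdot\,)\otimes_{\C^{\hbar}}\C^{[\hbar]}_j$. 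Your route via the triangle $H^{\hbar}\to(H^{\hbar})^{loc}\to F\xrightarrow{+1}$ and the identification $R\shh om(\C_U,H^{\hbar})^{loc}\simeq R\shh om(\C_U,(H^{\hbar})^{loc})$ is the same commutation, just applied to $(\,\cdot\,)^{loc}=\varinjlim_j\hbar^{-j}(\,\cdot\,)$ rather than to $(\,\cdot\,)^{[\hbar]}$. So your argument is correct, only longer: the paper short-circuits the triangle by recognizing the answer as $(\text{something})^{[\hbar]}$ outright.

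One small caution: Proposition~\ref{P:3} as stated concerns $\mathrm{Mod}(\C_{X_{sa}})$, whereas here you are computing $R\shh om_{\C_X}(\C_U,-)$ on $X$. This is harmless---the needed commutation for $G\in D^b_{\R-c}(\C_X)$ holds on $X$ as well (or one may pass to $X_{sa}$ via $\rho_*$ and Lemma~\ref{L:12})---but it would be cleaner to say so explicitly rather than cite Proposition~\ref{P:3} verbatim.
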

\begin{proof}
 Since the statement is of local nature, we may replace $G$ by an almost free resolution $G^{\bullet}$ such that each entry $G^i$ is a locally finite direct sum of sheaves of the form $\C_{\Omega_{ij}}^{\hbar}$, for given open subanalytic sets $\Omega_{ij}$. Therefore
$R\shh\text{om}_{\C_X^{\hbar}} (G, F)$ is isomorphic to $\shh\text{om}(G^{\bullet}, H)^{[\hbar]}$ 
which is cohomologically $\hbar$-torsion, and the result follows.

\end{proof}

\subsection{Polynomial extension of tempered cohomology}
Let us now assume that $X$ is a real analytic manifold and let $X_{sa}$ the associated subanaytic site.
We shall also keep the notation $(\cdot)^{L[\hbar]}$ for the corresponding functor $\text{D}^b(\C_{X_{sa}}^{\hbar})\to\text{D}^b(\C_{X_{sa}}^{\hbar}).$

\begin{lemma}\label{L:12}
\begin{itemize}
\item[(i)] For $F\in  D^b_{\R-c}(\C^{\hbar}_X)$, one has $\rho_*(F^{L[\hbar]})\simeq (\rho_*F)^{L[\hbar]}$.

\item[(ii)] The functors $\rho^{-1}$ and $(\cdot)^{[\hbar]}$ commute, more precisely, for all $F\in  D^b(\C^{\hbar}_X)$, one has $\rho^{-1}(F^ {L[\hbar]})\simeq (\rho^{-1}F)^{L[\hbar]}$ and, for any $G\in D^b(\C_X)$, one has  $\rho^{-1}(F\otimes \C_X^{[\hbar]})\simeq (\rho^{-1}F)\otimes \C_X^{[\hbar]}$.

\end{itemize}
\end{lemma}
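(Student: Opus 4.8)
The plan is to treat (ii) first, since it is formal, using that $\rho^{-1}$ is exact, commutes with $\otimes$, carries flat $\C^{\hbar}$-modules to flat $\C^{\hbar}$-modules, and sends the constant sheaves $\C_{X_{sa}}^{\hbar}$, $\C_{X_{sa}}^{[\hbar]}$ on $X_{sa}$ to $\C_X^{\hbar}$, $\C_X^{[\hbar]}$ on $X$. Replacing $F$ by a $\C^{\hbar}$-flat representative and applying $\rho^{-1}$ termwise produces a $\C^{\hbar}$-flat complex computing both $\rho^{-1}(F^{L[\hbar]})$ and $(\rho^{-1}F)^{L[\hbar]}$, whence the first isomorphism; the non-derived statement is the same computation, using only exactness of $\rho^{-1}$ and its compatibility with $\otimes$ and with constant sheaves.

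For (i) the plan is to reduce to a single generator $\C_U^{\hbar}=(\C_X^{\hbar})_U$, with $U$ relatively compact subanalytic open. First I would pick, by Lemma~\ref{L:19}, an almost free resolution $F^{\bullet}\to F$ with $F^{k}=\underset{i\in I_k}{\oplus}\C_{U_{k,i}}^{\hbar}$ for locally finite families of relatively compact subanalytic opens. Each $\C_U^{\hbar}$ is $\C_X^{\hbar}$-flat (its stalks are $\C^{\hbar}$ or $0$) and a locally finite direct sum of flats is flat, so $F^{\bullet}$ is a flat resolution and $F^{L[\hbar]}\simeq (F^{\bullet})^{[\hbar]}$, with entries $\underset{i}{\oplus}(\C_{U_{k,i}}^{\hbar})^{[\hbar]}\simeq\underset{i}{\oplus}\C_{U_{k,i}}^{[\hbar]}$, using that $(\cdot)^{[\hbar]}$ commutes with $\oplus$ and that $(\C_X^{\hbar})_U\otimes_{\C_X^{\hbar}}\C_X^{[\hbar]}\simeq\C_U^{[\hbar]}$. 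On the other hand, $\rho_*$ being left exact, multiplication by $\hbar$ remains injective on $\rho_*\C_U^{\hbar}$, so this sheaf is $\hbar$-torsion free, hence adapted to $(\cdot)^{[\hbar]}$; moreover $\C_U^{\hbar}=(\C_U)^{\hbar}$, $\C_U^{[\hbar]}$ and their locally finite direct sums are weakly $\R$-constructible, hence $\rho_*$-acyclic (as in the proof of Lemma~\ref{L:I}). Thus the natural morphism $(\rho_*F)^{L[\hbar]}\to R\rho_*(F^{L[\hbar]})$ is represented termwise, by Lemma~\ref{L:I}, through $\underset{i}{\oplus}(\rho_*\C_{U_{k,i}}^{\hbar})^{[\hbar]}\to\underset{i}{\oplus}\rho_*(\C_{U_{k,i}}^{[\hbar]})$, and it is enough to treat one $U$.

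For a single $U$, I would write $\C^{[\hbar]}=\underset{j}{\underset{\rightarrow}{\lim}}\,\C_j^{[\hbar]}$ with $\C_j^{[\hbar]}\simeq\C^{\hbar}/\hbar^j\C^{\hbar}$ of $\C$-dimension $j$, so $\C_U^{[\hbar]}\simeq\underset{j}{\underset{\rightarrow}{\lim}}\,(\C_U)^{\oplus j}$, a filtered colimit of $\R$-constructible sheaves with injective transition maps. Since the relatively compact subanalytic opens form a basis of $X_{sa}$, Proposition~\ref{P:1} gives that $\rho_*$ commutes with this colimit, and Lemma~\ref{L:I} that it commutes with the finite direct sums, so $\rho_*(\C_U^{[\hbar]})\simeq\underset{j}{\underset{\rightarrow}{\lim}}\,(\rho_*\C_U)^{\oplus j}$. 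On the other side $(\rho_*\C_U^{\hbar})^{[\hbar]}\simeq\underset{j}{\underset{\rightarrow}{\lim}}\,(\rho_*\C_U^{\hbar})^{[\hbar]}_j\simeq\underset{j}{\underset{\rightarrow}{\lim}}\,(\rho_*\C_U^{\hbar})/\hbar^j(\rho_*\C_U^{\hbar})$, and, $\C_U^{\hbar}$ being $\rho_*$-acyclic, $\rho_*$ commutes with the quotient by $\hbar^j$, so $(\rho_*\C_U^{\hbar})/\hbar^j\simeq\rho_*(\C_U^{\hbar}/\hbar^j)\simeq(\rho_*\C_U)^{\oplus j}$. The two colimits coincide compatibly with the transition maps, which gives the single-generator case, hence (i).

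The main obstacle, as I see it, is exactly controlling $\rho_*$ on the (in general infinite) locally finite direct sums of an almost free resolution and on the filtered colimit defining $\C^{[\hbar]}$ — which is precisely what Lemma~\ref{L:I} and Proposition~\ref{P:1} are designed for — together with the verification that $\C_U^{\hbar}$, $\C_U^{[\hbar]}$ are $\rho_*$-acyclic and $\hbar$-torsion free, so that the underived $\rho_*$ and $(\cdot)^{[\hbar]}$ may be applied termwise. The rest is routine bookkeeping with flatness over $\C^{\hbar}$ and with the identity $(\C_X^{\hbar})_U\otimes_{\C_X^{\hbar}}\C_X^{[\hbar]}\simeq\C_U^{[\hbar]}$.
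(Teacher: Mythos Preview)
Your proof is correct and follows essentially the same route as the paper: for (i) you reduce via the almost free resolution of Lemma~\ref{L:19} to the single generator $\C_U^{\hbar}$ and then invoke Lemma~\ref{L:I}, which is exactly the paper's argument (the paper phrases the single-generator step as ``Lemma~\ref{L:I} plus $\rho_*$ commutes with $(\cdot)^{\hbar}$''); for (ii) your argument via exactness and monoidality of $\rho^{-1}$ is the standard one and is equivalent to the paper's appeal to $\rho^{-1}$ commuting with inductive limits. One small point: your use of Proposition~\ref{P:1} to pass $\rho_*$ through the filtered colimit $\underset{j}{\underset{\rightarrow}{\lim}}(\C_U)^{\oplus j}$ is not quite on the nose, since Proposition~\ref{P:1} concerns colimits in $\mathrm{Mod}(\C_{X_{sa}})$ while your colimit lives on $X$; the cleanest fix is simply to observe $\C_U^{[\hbar]}\simeq \C_U^{\oplus\N}$ as $\C_X$-modules and apply Lemma~\ref{L:I} directly, which is how the paper proceeds.
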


\begin{proof}
(i) By Lemma \ref{L:19} it is enough to prove that, for any open subanalytic relatively compact set $U$,  $\rho_*(\C_U^{[\hbar]})\simeq (\rho_*\C_U^{\hbar})^{[\hbar]}$. This is an immediate consequence of Lemma \ref{L:I} and the fact that $\rho_*$ commutes with formal extension.

(ii) The result follows from Lemma \ref{L:19} and the fact that the functor $\rho^{-1}$ commutes with inductive limits.
\end{proof}

We define $\mathcal{D}{b}_{X_{sa}}^{t,[\hbar]}:=(\mathcal{D}{b}_{X_{sa}}^{t, \hbar})^{[\hbar]}$.

Since $\mathcal{D}{b}_{X_{sa}}^{t}$ is $(\cdot)^{\hbar}$-acyclic and  $\mathcal{D}{b}_{X_{sa}}^{t,\hbar}$ is $\hbar$-torsion free we get:

$$\mathcal{D}{b}_{X_{sa}}^{t,[\hbar]}\simeq {(\mathcal{D}{b}_{X_{sa}}^{t})}^{[\hbar]}.$$

\begin{lemma}\label{L:13}
For all $F\in D^b_{\R-c}(\C_X)$
the natural morphism
\begin{equation}\label{E:4}
\rho^{-1}R\mathcal{H}{om}_{\C_{X_{sa}}}(F,\mathcal{D}{b}_{X_{sa}}^{t,[\hbar]})\to  \left(\rho^{-1}R\mathcal{H}{om}_{\C_{X_{sa}}}(F,\mathcal{D}{b}_{X_{sa}}^t)\right)\otimes \C_X^{[\hbar]}
\end{equation} is an isomorphism in $D^b(\C_X^{\hbar})$
and, if $F$ is in degree zero,  $\rho^{-1}R\mathcal{H}{om}_{\C_{X_{sa}}}(F,\mathcal{D}{b}_{X_{sa}}^{t,[\hbar]})$ is concentrated in degree zero.

\end{lemma}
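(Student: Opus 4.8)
The plan is to reduce the statement to an already-established non-$\hbar$ comparison result together with the exactness and commutation properties of the polynomial extension functor. The key identity to exploit is $\mathcal{D}b_{X_{sa}}^{t,[\hbar]}\simeq(\mathcal{D}b_{X_{sa}}^t)^{[\hbar]}$, recorded just above, so that the left-hand side of \eqref{E:4} becomes $\rho^{-1}R\mathcal{H}{om}_{\C_{X_{sa}}}(F,(\mathcal{D}b_{X_{sa}}^t)^{[\hbar]})$. Since the statement is local and $F\in D^b_{\R-c}(\C_X)$, I would first replace $F$ by an almost free resolution $F^\bullet$ whose entries are locally finite direct sums of sheaves $\C_{U_{k,i}}$ with $U_{k,i}$ relatively compact subanalytic open (this is the classical analogue of Lemma \ref{L:19}, and uses the resolution already invoked in the proof of Lemma \ref{L:I}). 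By Proposition \ref{P:3} and the fact that $\rho^{-1}$ commutes with filtrant inductive limits, computing $R\mathcal{H}{om}$ against such a resolution commutes with the colimit $(\cdot)^{[\hbar]}=\varinjlim_j(\cdot)^{[\hbar]}_j$; similarly $(\cdot)\otimes\C_X^{[\hbar]}=\varinjlim_j(\cdot)\otimes{\C_X^{[\hbar]}}_j$ on the right-hand side.

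Thus both sides of \eqref{E:4} are filtrant colimits over $j$, and I am reduced to constructing, compatibly in $j$, an isomorphism
\[
\rho^{-1}R\mathcal{H}{om}_{\C_{X_{sa}}}\bigl(F,(\mathcal{D}b_{X_{sa}}^t)^\hbar\otimes_{\C^\hbar_{X_{sa}}}{\C^{[\hbar]}_{X_{sa}}}_j\bigr)\;\xrightarrow{\ \sim\ }\;\Bigl(\rho^{-1}R\mathcal{H}{om}_{\C_{X_{sa}}}(F,\mathcal{D}b_{X_{sa}}^t)\Bigr)\otimes_{\C^\hbar_X}{\C^{[\hbar]}_X}_j.
\]
Here I use that ${\C^{[\hbar]}_{X_{sa}}}_j\simeq(\hbar^{-j}\C^\hbar_{X_{sa}})/\C^\hbar_{X_{sa}}$ is, as a $\C^\hbar$-module, a successive extension of finitely many copies of $\C_{X_{sa}}$, so that tensoring by it is an exact operation taking $\mathcal{D}b^t$-related objects to $\mathcal{D}b^t$-related ones; combined with the fact that $\rho^{-1}$ and the (plain, non-derived) tensor by the flat-in-each-graded-piece module ${\C^{[\hbar]}_X}_j$ commute with the relevant $R\mathcal{H}{om}$ — since $F$ is $\R$-constructible and $\mathcal{D}b^t_{X_{sa}}$ is $\Gamma(U;\cdot)$-acyclic for $U$ in a basis — this gives the isomorphism at each finite level. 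Passing to the colimit in $j$ and using Lemma \ref{L:12}(ii) to interchange $\rho^{-1}$ with $(\cdot)^{L[\hbar]}$ yields \eqref{E:4}; alternatively one can phrase the whole argument with the two-term complex $\{0\to \mathcal{D}b^{t,\hbar}\to\mathcal{D}b^{t,\hbar,loc}\to 0\}$ representing $(\cdot)^{L[\hbar]}$ and check the comparison termwise.

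For the last assertion, suppose $F$ is concentrated in degree zero. Then $\rho^{-1}R\mathcal{H}{om}_{\C_{X_{sa}}}(F,\mathcal{D}b_{X_{sa}}^t)\simeq t\mathcal{H}{om}(F,\mathcal{D}b_X)$ is concentrated in degree zero (it is the sheaf of tempered distributions attached to $F$), and since $(\cdot)\otimes\C_X^{[\hbar]}=\varinjlim_j(\cdot)\otimes{\C^{[\hbar]}_X}_j$ with each ${\C^{[\hbar]}_X}_j$ a finite iterated extension of $\C_X$, the right-hand side of \eqref{E:4} is a filtrant colimit of objects in degree zero, hence concentrated in degree zero; the already-proven isomorphism then transports this to the left-hand side. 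The main obstacle I anticipate is the bookkeeping in the first step: one must be careful that the almost free resolution of $F$ is chosen so that both $R\mathcal{H}{om}_{\C_{X_{sa}}}(F^\bullet,\mathcal{D}b^t_{X_{sa}})$ is computed by honest sheaf-Hom's into an acyclic object and that the colimit defining $(\cdot)^{[\hbar]}$ may be pulled out of $R\mathcal{H}{om}$ — this is exactly where Proposition \ref{P:1} and Proposition \ref{P:3} (relative compactness of the $U_{k,i}$, commuting $\Gamma(U;\cdot)$ with filtrant colimits on $X_{sa}$) are indispensable, and these are the hypotheses one must not lose track of when globalizing.
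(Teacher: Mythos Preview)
Your proposal is correct and rests on the same key input as the paper's proof, namely Proposition~\ref{P:3} (commutation of $R^k\mathcal{H}om(G,\,\cdot\,)$ with filtrant inductive limits for $G\in D^b_{\R-c}(\C_X)$) together with the description $\mathcal{D}b_{X_{sa}}^{t,[\hbar]}=\varinjlim_j \mathcal{D}b_{X_{sa}}^{t}\otimes{\C^{[\hbar]}}_j$. The paper's argument, however, is a single stroke: it applies Proposition~\ref{P:3} directly to $F$ and to this colimit to obtain $R^k\mathcal{H}om(F,\mathcal{D}b^{t,[\hbar]})\simeq R^k\mathcal{H}om(F,\mathcal{D}b^{t})\otimes\C^{[\hbar]}$ for every $k$, which already gives the isomorphism~\eqref{E:4}; no almost free resolution of $F$ and no termwise finite-level bookkeeping is needed, since Proposition~\ref{P:3} is stated for arbitrary $G\in D^b_{\R-c}(\C_X)$. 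Your extra scaffolding (the resolution $F^\bullet$, the two-term model for $(\cdot)^{L[\hbar]}$, the acyclicity checks) is harmless but redundant: it re-derives by hand what Proposition~\ref{P:3} gives in one line. For the degree-zero statement the two arguments coincide: $R\mathcal{H}om(F,\mathcal{D}b^t)$ is concentrated in degree zero when $F$ is, and tensoring with $\C^{[\hbar]}$ preserves this.
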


\begin{proof}
For each $k\in\Z$,  by Proposition \ref{P:3},  one has
\begin{equation}\label{E:11}
R^k\mathcal{H}{om}_{\C_{X_{sa}}}(F,\mathcal{D}{b}_{X_{sa}}^{t,[\hbar]})\simeq  R^k\mathcal{H}{om}_{\C_{X_{sa}}}(F,\mathcal{D}{b}_{X_{sa}}^t)\otimes \C_X^{[\hbar]}.
\end{equation}
Therefore, morphism (\ref{E:4}) is an isomorphism.

Now, suppose that $F$ is in degree zero. Then  $R\mathcal{H}{om}_{\C_{X_{sa}}}(F,\mathcal{D}{b}_{X_{sa}}^t)$ is concentrated in degree $0$. Hence $R\mathcal{H}{om}_{\C_{X_{sa}}}(F,\mathcal{D}{b}_{X_{sa}}^t)\otimes \C_X^{[\hbar]}$ is also concentrated in degree $0$.
\end{proof}

\begin{definition}
The functor of \textit{polynomial extension of tempered distributions}, noted  $TDb^{[\hbar]}(\cdot)$, is the functor   $\text{D}^b_{\R-c}(\C_X^{\hbar})\to D^b(\C_X^{\hbar})$ defined by $$TDb^{[\hbar]}(F):=\rho^{-1}R\mathcal{H}{om}_{\C_{X_{sa}}^{\hbar}}(\rho_*F,\mathcal{D}{b}_{X_{sa}}^{t,[\hbar]}).$$
\end{definition}

 Since  $\mathcal{D}{b}_{X_{sa}}^{t,[\hbar]}$ belongs to $\text{Mod}(\rho_!(\shd_X)^{\hbar})$ it follows that ${TDb}^{[\hbar]}(F)$ is an object of $D^b(\shd_X^{\hbar})$. In other words, $TDb^{[\hbar]}(\cdot)$ is a functor from $\text{D}^b_{\R-c}(\C_X^{\hbar})$ to $D^b(\shd_X^{\hbar}).$

  By Lemma \ref{L:206},   ${TDb}^{[\hbar]}(F)$ is cohomologically $\hbar$-torsion.

\begin{remark}\label{R:20}
Let $F\in D^b_{\R-c}(\C_X)$. Since  $$\rho^{-1}R\mathcal{H}{om}(F, \mathcal{D}b_{X_{sa}}^t)\simeq t\shh om(F,\mathcal{D}b_X),$$  Lemma \ref{L:13} says nothing more  than ${TDb}^{[\hbar]}(F^{\hbar})$ is isomorphic to  $t\shh om(F, \mathcal{D}b_X)\otimes\C_X^{[\hbar]}$. Moreover the isomorphism is compatible with the  structure of $\shd_X^{\hbar}$-modules.

In particular, since $t\shh om(\C_U, \mathcal{D}b_X)\otimes\C_X^{[\hbar]}$, being a $\mathcal{C}_X^{\infty}$-module,  is a soft sheaf, we have that
$R\Gamma(U; \mathcal{D}{b}_{X_{sa}}^{t,[\hbar]})\simeq  R\Gamma(X; t\shh om(\C_U, \mathcal{D}b_X)\otimes \C_X^{[\hbar]})$  is concentrated in degree zero. Namely, if $U$ is relatively compact,
$$\Gamma(U; \mathcal{D}{b}_{X_{sa}}^{t,[\hbar]})\simeq \Gamma(U; \mathcal{D}{b}_{X_{sa}}^t)\otimes \C_X^{[\hbar]}$$ by Proposition \ref{P:1}.

\end{remark}
\begin{remark}\label{L:14}
For any $F\in \text{D}^b_{\R-c}(\C_X^{\hbar})$,
choosing a resolution of $F$ as in (\ref{E:21}), we conclude that ${TDb}^{[\hbar]}(F)$ is isomorphic to a bounded complex $TDb^{[\hbar]}(F^\bullet)$ such that   each entry is isomorphic to a locally finite sum of $\mathcal{C}_X^\infty$-modules of the form $TDb^{[\hbar]}(\C^{\hbar}_U)$, with open subanalytic sets $U$. This provides a soft resolution of $TDb^{[\hbar]}(F)$.
Namely, for any  open subanalytic set $U$ in $X$, we have
 $$R\Gamma_c(U; TDb^{[\hbar]}(F))\simeq \Gamma_c(U; TDb^{[\hbar]}(F^{\bullet}))$$ and  $$R\Gamma (U; TDb^{[\hbar]}(F))\simeq \Gamma (U; TDb^{[\hbar]}(F^{\bullet})).$$
  \end{remark}

We obtain a functor of \textit{polynomial extension of tempered holomorphic functions} $${TH}^{[\hbar]}(\cdot):\text{D}^b_{\R-c}(\C_X^{\hbar})\to\text{D}^b(\shd_X^{\hbar}),$$ by setting $$ \ F\mapsto \rho^{-1}R\mathcal{H}{om}_{\C_{X_{sa}}^{\hbar}}(\rho_*F,\sho_{X_{sa}}^{t, [\hbar]}). $$

Clearly, we have, for $F\in D^b_{\R-c}(\C_X^{\hbar})$,

$${TH}^{[\hbar]}(F)\simeq R\mathcal{H}om_{\shd_{\overline{X}}}(\sho_{\overline{X}}, TDb^{[\hbar]}(F)).$$ In particular, ${TH}^{[\hbar]}(F)$ is cohomologically $\hbar$-torsion.

Therefore, ${TDb}^{[\hbar]}(\cdot)$ can be understood as the polynomial  extension of the functor $t\shh om(\cdot; \mathcal{D}b_X)$ and ${TH}^{[\hbar]}(\cdot)$ can be understood as the polynomial extension of the functor $t\shh om(\cdot; \sho_X)$.

\begin{remark}\label{R:16}
Let $F\in D^b_{\R-c}(\C_X)$. Since one has  $$\rho^{-1}R\mathcal{H}{om}(F,\sho_{X_{sa}}^t)\simeq t\shh om(F,\sho_X),$$  it follows  that
${TH}^{[\hbar]}(F^{\hbar})$ is isomorphic to $t\shh om(F, \sho_X)\otimes \C_X^{[\hbar]}$ hence it is cohomologically $\hbar$-torsion.
In particular, if $U$ is Stein subanalytic relatively compact, and $X$ is a Stein manifold $$R\Gamma(U; \sho_{X_{sa}}^{t, [\hbar]})\simeq R\Gamma(X; TH^{[\hbar]}(\C_U^{\hbar}))$$ is concentrated in degree 0.

All these isomorphisms are compatible with the  structure of  $\shd_X^{\hbar}$-modules.
\end{remark}
As an application we obtain the following comparison result:

\begin{proposition}\label{P:17}
Let $\shm \in D^b_{rh}(\shd_X)$ and let $F\in D^b_{\R-c}(\C^{\hbar}_X)$. Then, the natural morphism:
\begin{equation}\label{E:17}
R\shh{om}_{\shd_X^{\hbar}}(\shm, TH^{[\hbar]}(F))\to
R\shh{om}_{\shd_X^{\hbar}}(\shm, R\shh{om}_{\C_X^{\hbar}}(F, \sho_X^{[\hbar]}))
\end{equation}
is an isomorphism in $D^b(\C_X^{\hbar})$.
\end{proposition}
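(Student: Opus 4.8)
The plan is to imitate the proof of Proposition \ref{P:9}, replacing ``cohomologically $\hbar$-complete'' by ``cohomologically $\hbar$-torsion'' and the conservativity statement of Proposition \ref{P121} by that of Lemma \ref{L:200}. Here $\shm$ is to be read as the $\shd_X^{\hbar}$-module $\shd_X^{\hbar}\otimes_{\shd_X}\shm$, so that $R\shh om_{\shd_X^{\hbar}}(\shm,-)\simeq R\shh om_{\shd_X}(\shm,-)$ by adjunction; I will use repeatedly that, being regular holonomic, $\shm$ is $\shd_X$-coherent, hence locally a bounded complex of free $\shd_X$-modules of finite rank (as $\shd_X$ has finite global dimension), i.e.\ a perfect complex, and that $gr_{\hbar}(\shd_X^{\hbar}\otimes_{\shd_X}\shm)\simeq\shm$.

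First I would check that both members of (\ref{E:17}) are cohomologically $\hbar$-torsion. For the right-hand member, $\sho_X$ is $(\cdot)^{\hbar}$-acyclic and $\sho_X^{\hbar}$ is $\hbar$-torsion free, so Lemma \ref{L:206} gives that $R\shh om_{\C_X^{\hbar}}(F,\sho_X^{[\hbar]})$ is cohomologically $\hbar$-torsion; for the left-hand member, $TH^{[\hbar]}(F)$ is already known to be cohomologically $\hbar$-torsion. It then remains to see that $R\shh om_{\shd_X}(\shm,-)$ sends cohomologically $\hbar$-torsion objects to cohomologically $\hbar$-torsion ones: this holds because the localization functor $(\cdot)^{\mathrm{loc}}$ is exact and commutes with $R\shh om$ out of a perfect complex, so that $R\shh om_{\shd_X}(\shm,N)^{\mathrm{loc}}\simeq R\shh om_{\shd_X}(\shm,N^{\mathrm{loc}})$ (alternatively, one can use holonomic duality to rewrite $R\shh om_{\shd_X}(\shm,-)$ as a derived tensor product over $\shd_X^{\hbar}$ by the $\shd_X^{\hbar}$-coherent dual of $\shd_X^\hbar\otimes_{\shd_X}\shm$ and then quote Lemma \ref{L:204}). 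By Lemma \ref{L:200} it then suffices to prove that (\ref{E:17}) becomes an isomorphism after applying $gr_{\hbar}$.

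Next I would compute $gr_{\hbar}$ of both members; this is the heart of the argument. The key local fact is that, for $P$ any of $\sho_X$, $\sho_{X_{sa}}^{t,\hbar}$ and their analogues, the exact triangle $P\to P^{\mathrm{loc}}\to P^{[\hbar]}\xrightarrow{+1}$, together with $gr_{\hbar}(P^{\mathrm{loc}})=0$ (localization makes $\hbar$ invertible, and $gr_{\hbar}$ kills $\hbar$-invertible objects), yields $gr_{\hbar}(P^{[\hbar]})\simeq gr_{\hbar}(P)[1]$ — a cohomological shift, which is the manifestation of $\C^{[\hbar]}\overset{L}{\otimes}_{\C^{\hbar}}\C\simeq\C[1]$ and the principal difference with the formal case of Proposition \ref{P:9}. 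Feeding this into $TH^{[\hbar]}(F)=\rho^{-1}R\shh om_{\C_{X_{sa}}^{\hbar}}(\rho_*F,\sho_{X_{sa}}^{t,[\hbar]})$ (commuting $R\shh om$ past $(\cdot)^{\mathrm{loc}}$, legitimate after passing to an almost free resolution of $F$, cf.\ Lemma \ref{L:19}) I would get $gr_{\hbar}TH^{[\hbar]}(F)\simeq gr_{\hbar}TH_{\hbar}(F)[1]\simeq t\shh om(gr_{\hbar}F,\sho_X)[1]$, the last step following from the identification of $gr_{\hbar}$ of the formal tempered holomorphic cohomology functor with the classical functor $t\shh om(\cdot,\sho_X)$ (as in \cite{DGS}, or directly from an almost free resolution of $F$ together with $gr_\hbar\sho_{X_{sa}}^{t,\hbar}\simeq\sho_{X_{sa}}^t$). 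Likewise, from the triangle for $\sho_X$ and $gr_{\hbar}\sho_X^{\hbar}\simeq\sho_X$ I would obtain $gr_{\hbar}R\shh om_{\C_X^{\hbar}}(F,\sho_X^{[\hbar]})\simeq R\shh om_{\C_X}(gr_{\hbar}F,\sho_X)[1]$. Since $\shd_X^\hbar\otimes_{\shd_X}\shm$ is perfect over $\shd_X^{\hbar}$, $gr_{\hbar}$ commutes with $R\shh om_{\shd_X^{\hbar}}(\shm,-)$, so $gr_{\hbar}$ of the two members of (\ref{E:17}) become $R\shh om_{\shd_X}(\shm,t\shh om(gr_{\hbar}F,\sho_X))[1]$ and $R\shh om_{\shd_X}(\shm,R\shh om_{\C_X}(gr_{\hbar}F,\sho_X))[1]$, respectively.

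Finally, tracing the natural morphism of (\ref{E:17}) through these identifications, its image under $gr_{\hbar}$ is the shift by $[1]$ of the natural comparison morphism $R\shh om_{\shd_X}(\shm,t\shh om(gr_{\hbar}F,\sho_X))\to R\shh om_{\shd_X}(\shm,R\shh om_{\C_X}(gr_{\hbar}F,\sho_X))$, which is an isomorphism for $\shm\in D^b_{rh}(\shd_X)$ and $gr_{\hbar}F\in D^b_{\R-c}(\C_X)$ by Corollary 6.2 of \cite{KS2}; this would conclude the proof. The step I expect to be the main obstacle is the $gr_{\hbar}$-bookkeeping of the third paragraph: correctly tracking the cohomological shift produced by the polynomial extension and verifying that $gr_{\hbar}$ commutes with the various $R\shh om$ functors in play, which rests on the coherence of $\shm$ and on the existence of almost free resolutions for $F$; the genuine analytic content of the statement is entirely imported from \cite{KS2} and \cite{DGS}.
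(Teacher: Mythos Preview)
Your proposal is correct and follows essentially the same strategy as the paper: show both sides of (\ref{E:17}) are cohomologically $\hbar$-torsion, then invoke the conservativity of $gr_{\hbar}$ (Lemma \ref{L:200}) to reduce to the classical comparison for $\shm\in D^b_{rh}(\shd_X)$. Your ``alternative'' for the $\hbar$-torsion step (rewriting $R\shh om_{\shd_X^{\hbar}}(\shm,-)$ as a tensor product via duality and then applying Lemma \ref{L:204}) is in fact exactly the route the paper takes; the paper is terser than you are on the $gr_{\hbar}$ bookkeeping (it cites \cite{K2} directly without spelling out the shift by $[1]$), and it invokes Kashiwara's original Riemann--Hilbert paper \cite{K2} rather than Corollary 6.2 of \cite{KS2}, but the content is the same.
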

\begin{proof}

Recall that $TH^{[\hbar]}(F)$ is cohomologically $\hbar$-torsion. We have

$$R\shh{om}_{\shd_X^{\hbar}}(\shm, TH^{[\hbar]}(F))\simeq R\shh{om}_{\shd_X^{\hbar}}(\shm, \shd_X^{\hbar})\otimes_{\shd_X^{\hbar}} TH^{[\hbar]}(F),$$

$$R\shh{om}_{\shd_X^{\hbar}}(\shm, R\shh{om}_{\C_X^{\hbar}}(F, \sho_X^{[\hbar]}))\simeq R\shh{om}_{\shd_X^{\hbar}}(\shm, \shd_X^{\hbar})\otimes_{\shd_X^{\hbar}} R\shh{om}_{\C_X}(F, \sho_X^{[\hbar]}).$$

By lemmas \ref{L:204} and \ref{L:206}, both sides of (\ref{E:17}) are cohomologically $\hbar$-torsion.

The proof then follows by Lemma \ref {L:200} and  the isomorphism $$R\shh{om}_{\shd_X}(\shm, t\shh{om}(F, \sho_X))\to
R\shh{om}_{\shd_X}(\shm, R\shh{om}(F, \sho_X))
 $$ proved in \cite{K2}.
\end{proof}

To end this section, we remark that our perspective here  is to work in the framework of $\C^{\hbar}$-algebras, or sheaves of modules over such algebras. However, the polynomial extensions of $\sho_X$ or of $\shd_X$ as well the associated categories of modules, have their own interest, but we shall not develop here such theory.

Let us just remark some obvious facts.
 $\shd_X^{[\hbar]}$ is a flat $\shd_X$-module and $\sho_X^{[\hbar]}$ is a flat $\sho_X$-module.
We may endow $\shd_X^{[\hbar]}$ with the filtration $F_m(\shd_X^{[\hbar]})$ image of $F_m(\shd_X)[\hbar^{-1}]$ in $\shd_X^{[\hbar]}$, where $ (F_m(\shd_X))_{m\geq 0}$ denotes the filtration on $\shd_X$ by the usual order.






\section{Duality for formal and polynomial extension functors}

\hspace*{\parindent}We shall now state and prove our main  results.

Let $X$ be a real analytic manifold.
As proved in \cite {K2}, Lemma \ref{L:18}, given $F\in D_{\R-c}^b(\C_X)$, for any open subanalytic set $U$, $\Gamma(U; F\overset{w}{\otimes}\mathcal{C}^{\infty}_X)$ is a complex of FN spaces, $\Gamma_c(U; t\mathcal{H}om(F, \mathcal{D}b_X^{\nu}))$ is a complex of DFN spaces and they are dual to each other. From this we get that
$\Gamma(U; \mathcal{C}_X^{\infty, \hbar})$ is a $FN$ topological vector space, $\Gamma_c(U; \mathcal{D}b_X ^{[\hbar]})\simeq \Gamma_c(X; \mathcal{D}b_X)\otimes \C_X^{[\hbar]}$ is a DFN topological vector space, and they are dual to each other.

Let us now consider  $F\in D^b_{\R-c}(\C_X^{\hbar})$. We shall denote  ${TDb}^{[\hbar]}(F)\otimes_{\sha_X} {\sha_X^{\nu}}$ by ${TDb}^{[\hbar]}(F)^{\nu}$  and denote
${TH}^{[\hbar]}(F)\otimes_{\sho_X}\Omega_X$ by ${TH}^{[\hbar]}(F)^{\nu}$.

\begin{proposition}\label{P:21}
Let $F\in D^b_{\R-c}(\C_X^{\hbar})$. Then:

$1$.  $R\Gamma(X; F\overset{\mathrm{w},\hbar}{\otimes}\mathcal{C}_X^\infty)$ is isomorphic to a complex of FN spaces.

$2$. $R\Gamma_c(X; {TDb}^{[\hbar]}(F)^{\nu})$ is isomorphic to a complex of DFN spaces.

$3$.  The complexes respectively described in $(1)$  and  in $(2)$ can be chosen as to be dual to each other.
\end{proposition}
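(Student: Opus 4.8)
The plan is to reduce everything to the already-established non-$\hbar$ duality of Kashiwara (the $F\in D^b_{\R-c}(\C_X)$ case recalled just before the statement), applying it term by term to an almost free resolution and then keeping track of how the formal extension $(\cdot)^{R\hbar}$ and the polynomial extension $(\cdot)^{[\hbar]}$ interact with FN/DFN topologies. First I would fix an almost free resolution $F^\bullet$ of $F$ as in Lemma \ref{L:19} (resp.\ Lemma \ref{L:19} / Remark \ref{L:141} and Remark \ref{L:14}), so that each entry $F^i$ is a locally finite direct sum $\oplus_{j\in I_i}\C_{U_{i,j}}^{\hbar}$ of formal constant sheaves on relatively compact subanalytic open sets. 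By Remark \ref{L:141}, $F\overset{\mathrm{w},\hbar}{\otimes}\mathcal{C}_X^\infty$ is represented by the soft complex $F^\bullet\overset{\mathrm{w},\hbar}{\otimes}\mathcal{C}_X^\infty$, so $R\Gamma(X;F\overset{\mathrm{w},\hbar}{\otimes}\mathcal{C}_X^\infty)\simeq\Gamma(X;F^\bullet\overset{\mathrm{w},\hbar}{\otimes}\mathcal{C}_X^\infty)$; similarly by Remark \ref{L:14}, $R\Gamma_c(X;{TDb}^{[\hbar]}(F)^\nu)\simeq\Gamma_c(X;{TDb}^{[\hbar]}(F^\bullet)^\nu)$. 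Thus it suffices to treat a single entry of the form $\C_U^{\hbar}$ with $U$ relatively compact subanalytic, and then assemble.

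For the single-entry case I would argue as follows. By Lemma \ref{L:18}, $\C_U^{\hbar}\overset{\mathrm{w},\hbar}{\otimes}\mathcal{C}_X^\infty\simeq(\C_U\overset{\mathrm{w}}{\otimes}\mathcal{C}_X^\infty)^{\hbar}$, and since $\C_U\overset{\mathrm{w}}{\otimes}\mathcal{C}_X^\infty$ is soft, taking global sections and using that $\Gamma(X;(\cdot)^\hbar)=(\Gamma(X;\cdot))^\hbar=\varprojlim_j\Gamma(X;\cdot)\otimes\C^\hbar/\hbar^j$ gives $\Gamma(X;\C_U^{\hbar}\overset{\mathrm{w},\hbar}{\otimes}\mathcal{C}_X^\infty)\simeq\big(\Gamma(X;\C_U\overset{\mathrm{w}}{\otimes}\mathcal{C}_X^\infty)\big)^{\hbar}$. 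Now $\Gamma(X;\C_U\overset{\mathrm{w}}{\otimes}\mathcal{C}_X^\infty)=\Gamma(X;\mathcal{I}^\infty_{X,X\setminus U})$ is an FN space by \cite{K2} (as recalled in the paragraph preceding the statement), and the countable product $V^\hbar$ of copies of an FN space $V$ is again FN, as noted in Section 3. Dually, by Remark \ref{R:20}, $\Gamma_c(X;{TDb}^{[\hbar]}(\C_U^{\hbar})^\nu)\simeq\Gamma_c(X;t\shh om(\C_U,\mathcal{D}b_X^\nu))\otimes\C_X^{[\hbar]}\simeq V^*\otimes(\C((\hbar))/\C^\hbar)$ where $V^*=\Gamma_c(X;t\shh om(\C_U,\mathcal{D}b_X^\nu))$ is the DFN dual of $V$; by the discussion in Section 3, $V^*\otimes\C((\hbar))/\C^\hbar$ is the DFN strong dual of $V^\hbar$, the duality being $\langle f,g\rangle\mapsto Res_{\hbar=0}\langle f,g\rangle$ combining Kashiwara's pairing with the residue. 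This settles parts (1), (2) and (3) for each entry, with the transpose maps matching because the non-$\hbar$ pairing is already compatible with the differentials of $F^\bullet$ and $(\cdot)^\hbar$, $(\cdot)^{[\hbar]}$ are additive and functorial.

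To pass from entries to the full complexes I would use that a finite direct sum of FN spaces is FN and a finite direct sum of DFN spaces is DFN, together with Lemma \ref{L:I} (so that $\rho_*$ and hence the Whitney functor commute with the direct sums appearing in each $F^i$, at least up to the finitely many strata needed locally — but since $X$ itself need not be compact one must be slightly careful: the families $\{U_{i,j}\}$ are only locally finite, so strictly speaking $\Gamma(X;\cdot)$ of a locally finite direct sum is a product, and a countable product of FN is FN, while $\Gamma_c$ of a locally finite direct sum is a direct sum, and a countable direct sum of DFN is DFN). Then each $\Gamma(X;F^i\overset{\mathrm{w},\hbar}{\otimes}\mathcal{C}_X^\infty)$ is FN, each $\Gamma_c(X;{TDb}^{[\hbar]}(F^i)^\nu)$ is DFN, and they are strong duals term by term with transpose differentials, giving the mutually dual complexes claimed. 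The main obstacle I expect is precisely this bookkeeping around local finiteness versus compact support: ensuring that $\Gamma$ turns the relevant locally finite sums into products (preserving FN) while $\Gamma_c$ turns them into direct sums (preserving DFN), and that the two operations remain transpose to one another — this is exactly the point where one invokes that the topological dual of a countable direct sum is the product of the duals, and conversely, so the FN/DFN duality is stable under both passages. Everything else is a formal consequence of Lemmas \ref{L:18}, \ref{L:19}, Remarks \ref{L:141}, \ref{R:20}, \ref{L:14}, the properties of $(\cdot)^\hbar$ and $(\cdot)^{[\hbar]}$, and the classical Kashiwara duality in \cite{K2}.
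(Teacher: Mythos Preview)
Your proposal is correct and follows essentially the same strategy as the paper: fix an almost free resolution $F^\bullet$ via Lemma~\ref{L:19}, compute $R\Gamma$ and $R\Gamma_c$ via the soft resolutions of Remarks~\ref{L:141} and~\ref{L:14}, reduce to the single term $\C_U^\hbar$ using Lemma~\ref{L:18} and Remark~\ref{R:20}, and invoke the classical duality together with the FN/DFN behaviour of $(\cdot)^\hbar$ and $(\cdot)^{[\hbar]}$ from Section~3.

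The one organizational difference worth noting is how the non-compact case is handled. The paper inserts an intermediate step: it first treats compactly supported $F$ (where each $F^i$ is a \emph{finite} sum, so finite products of FN and finite sums of DFN suffice), and then reduces the general case to this one by choosing an exhaustion $X=\bigcup_n \mathrm{int}\,Z_n$ by compact subanalytic sets, writing $\Gamma(X;F^i\overset{\mathrm{w},\hbar}{\otimes}\mathcal{C}_X^\infty)\simeq\varprojlim_n\Gamma(X;F^i_{Z_n}\overset{\mathrm{w},\hbar}{\otimes}\mathcal{C}_X^\infty)$ and $\Gamma_c(X;TDb^{[\hbar]}(F^i)^\nu)\simeq\varinjlim_n\Gamma_c(X;TDb^{[\hbar]}(F^i_{Z_n})^\nu)$, and then using that the strong dual of the projective limit of a countable FN system is the inductive limit of the duals. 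You instead bypass the exhaustion and go directly to the observation that for a locally finite family the sheaf-theoretic direct sum coincides with the product, so $\Gamma(X;\cdot)$ yields a countable product of FN spaces (still FN) while $\Gamma_c(X;\cdot)$ yields a countable direct sum of DFN spaces (still DFN), and these are mutually dual. Both arguments are valid and amount to the same computation; the paper's exhaustion makes the transition maps in the dual system more explicit, while your formulation is more economical. The point you flag as ``the main obstacle'' is exactly the one the paper addresses with the $Z_n$ device, and your resolution of it is equivalent.
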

\begin{proof}

$1$. We shall adapt the argument of Proposition 2.2 of \cite{KS2}.

a) By Lemma \ref{L:18} and Proposition 2.2 of (loc.cit), the assertion is true for  $F=\C_U^{\hbar}$, with arbitrary $U$ opens subanalytic, since
$\Gamma(X; (\C_U\overset{\mathrm{w}}{\otimes}\mathcal{C}_X^\infty)^{\hbar})\simeq \Gamma(X; (\C_U\overset{\mathrm{w}}{\otimes}\mathcal{C}_X^\infty))^{\hbar}.$

b) Now assume that $F$ has compact support. Then, by Lemma \ref{L:19}, $F$ is quasi-isomorphic to a bounded complex :

$$F^{\bullet}: \cdots\to F^{-1}\to F^{0}\to 0$$
where $F^0$ is in degree $0$ and each $F^i$ is a finite direct sum of sheaves of type $\C_U^{\hbar}$, $U$ open subanalytic relatively compact. Hence, by a), applying the functor $\Gamma(X; (\cdot)\overset{\text{w},\hbar}{\otimes}\mathcal{C}_X^\infty)$ to the complex above, we obtain an isomorphism $R\Gamma(X; F\overset{\mathrm{w},\hbar}{\otimes}\mathcal{C}_X^\infty)\simeq V^{\bullet}$, where  of $V^{\bullet}$ is a complex of type FN.

c) To treat the general case, let us take an increasing sequence $Z_n$ of compact subanalytic subsets such that $X$ is the union of the interiors of $Z_n$. Let us consider a resolution of $F$, $F^{\bullet}$, given by Lemma \ref{L:19}. For $F_{Z_n}$, note by $F_{Z_n}^{\bullet}$ the associated resolution.

Each entry $F^i$ of $F^\bullet$ being a locally finite direct sum of the form $\oplus_{j_i}\C_{U_{j_i}}^\hbar$, for relatively compact subanalytic open sets $U_{j_i}$, we have: $$F^i\overset{\mathrm{w},\hbar}{\otimes}\mathcal{C}_X^\infty\simeq \oplus_{j_i} (\C_{U_{j_i}}\overset{\mathrm{w}}{\otimes}\mathcal{C}_X^\infty)^\hbar\simeq \prod_{j_i} (\C_{U_{j_i}}\overset{\mathrm{w}}{\otimes}\mathcal{C}_X^\infty)^\hbar,$$ since the direct sum of a locally finite family of sheaves on $X_{sa}$ is isomorphic to the product of the same family. Hence, we get: $$\Gamma(X;F^i\overset{\mathrm{w},\hbar}{\otimes}\mathcal{C}_X^\infty)\simeq \prod_{j_i}\Gamma(X;\C_{U_{j_i}}\overset{\mathrm{w}}{\otimes}\mathcal{C}_X^\infty)^\hbar\simeq$$ $$ \prod_{j_i}\left(\underset{n}{\underset{\longleftarrow}{\lim}}\Gamma(X;\C_{U_{j_i}\cap Z_n}\overset{\mathrm{w}}{\otimes}\mathcal{C}_X^\infty)^\hbar\right)\simeq
\underset{n}{\underset{\longleftarrow}{\lim}}\Gamma(X;F^i_{Z_n}\overset{\mathrm{w},\hbar}{\otimes}\mathcal{C}_X^\infty).$$

As a consequence, the complex $R\Gamma(X; F\overset{\mathrm{w},\hbar}{\otimes}\mathcal{C}_X^\infty)$ is isomorphic to  the projective limit  of the complexes $\Gamma(X; F^{\bullet}_{Z_n}\overset{\mathrm{w},\hbar}{\otimes}\mathcal{C}_X^\infty)$ hence it is of FN type.

 $2$. We shall use a similar argument as in $1$.

 a') Let $U$ open subanalytic in X. The assertion for $F=\C^{\hbar}_U$  is an immediate consequence of Lemma \ref{L:13}.

 b') Assume that $F$ is compactly supported and choose a resolution $F^{\bullet}$ of $F$  as in b) and apply Remark \ref{L:14}. Hence  $R\Gamma_c(X; {TDb}^{[\hbar]}(F)^{\nu})$ is isomorphic to  $\Gamma_c(X; {TDb}^{[\hbar]}(F^{\bullet})^{\nu})$. Since $\Gamma_c(X; \cdot)$ commutes with inductive limits, this entails that $R\Gamma_c(X; {TDb}^{[\hbar]}(F)^{\nu})\simeq W^{\bullet}$ with $W^{\bullet}$ a complex of type DFN.

 c') With the same choice of $Z_n$ as in c),  note that $TDb^{[\hbar]}(F)^{\nu}$ is represented by the inductive limit of $TDb^{[\hbar]}(F^{\bullet}_{Z_n})^{\nu}$. In fact, each entry $F^i$ of $F^\bullet$ is a locally finite direct sum of the form $\oplus_{j_i}\C_{U_{j_i}}^\hbar$, for subanalytic open sets $U_{j_i}$, and we have: $$\Gamma_c(X; {TDb}^{[\hbar]}(F^i)^{\nu})\simeq \oplus_{j_i}\Gamma_c(X;{TDb}^{[\hbar]}(\C_{U_{j_i}})^{\nu})\simeq$$ $$\simeq\oplus_{j_i}\left(\Gamma_c(X;t\shh{om}(\C_{U_{j_i}}, \mathcal{D}b)^{\nu})\otimes\C^{[\hbar]}_X\right)\simeq $$ $$\simeq \oplus_{j_i}\left(\underset{n}{\underset{\longrightarrow}{\lim}}\Gamma_c(X;t\shh{om}(\C_{U_{j_i}\cap Z_n}, \mathcal{D}b_X)^{\nu})\otimes\C^{[\hbar]}_X\right)\simeq\underset{n}{\underset{\longrightarrow}{\lim}}\Gamma_c(X; {TDb}^{[\hbar]}(F^i_{Z_n})^{\nu}),$$
which entails that $\Gamma_c(X; {TDb}^{[\hbar]}(F)^{\nu})$ is isomorphic to a complex of type DFN.

$3$. Let us choose an almost free resolution of $F$, $F^{\bullet}$, as above, and let us choose an increasing sequence $Z_n$ of compact subanalytic subsets such that $X$ is the union of the interiors of $Z_n$. By Lemma 4.3 of \cite{KS2}, for each $F^i$, $\Gamma(X;F^i_{Z_n}\overset{\mathrm{w},\hbar}{\otimes}\mathcal{C}_X^\infty)$ is FN, $\Gamma_c(X; {TDb}^{[\hbar]}(F^i_{Z_n})^{\nu})$ is DFN and they  are dual to each other. The proof follows by remarking that the topological dual of
$\underset{n}{\underset{\longleftarrow}{\lim}}\Gamma(X;F^i_{Z_n}\overset{\mathrm{w},\hbar}{\otimes}\mathcal{C}_X^\infty)$ is $\underset{n}{\underset{\longrightarrow}{\lim}}\Gamma_c(X; {TDb}^{[\hbar]}(F^i_{Z_n})^{\nu})$ and conversely.

\end{proof}

Let  now $X$ be a complex manifold of dimension $d_ X$, let $\mathcal{C}_X^{\infty, (0, \bullet)}$ denote the complex of conjugate differential forms with coefficients in $\mathcal{C}_{X}^{\infty}$ and  recall that $$ F\overset{\text{w},\hbar}{\otimes}\mathcal{C}_X^{\infty,(0, \bullet)}:=(F\overset{\text{w},\hbar}{\otimes}\mathcal{C}^{\infty}_X)\otimes_{\sha_X}\mathcal{C}_{X}^{\infty,(0, \bullet)}.$$

Then, for any $F\in D^b_{\R-c}(\C_X^\hbar)$,
\begin{equation}\label{E:20}
F\overset{\text{w},\hbar}{\otimes}\sho_X\simeq F\overset{\text{w},\hbar}{\otimes}\mathcal{C}_X^{\infty,(0, \bullet)}.
\end{equation}
Indeed, by taking an almost free resolution of $F$, we may assume that
$F=G^{\hbar}$, with $G\in \text{Mod}_{\R-c}(\C_X)$. In that case,

$$F\overset{\text{w},\hbar}{\otimes}\sho_X\simeq G^\hbar\overset{\text{w},\hbar}{\otimes}\sho_X\simeq (G\overset{\text{w}}{\otimes}\sho_X)^{\hbar}\simeq (G\overset{\text{w}}{\otimes}\mathcal{C}_X^{\infty,(0, \bullet)})^{\hbar}\simeq G^{\hbar}\overset{\text{w}}{\otimes}\mathcal{C}_X^{\infty,(0, \bullet)}.$$
Similarly,  consider the resolution $\mathcal{D}b_{X}^{(n, n-\bullet)}$ of $\Omega_{X}[d_X]$.
We set
$$TDb^{[\hbar]}(F)\otimes_{\sha_X}\mathcal{D}b_{X}^{(n, n-\bullet)}:=TDb^{[\hbar]}(F)^{(n, n-\bullet)}.$$
Therefore, we get
\begin{equation}\label{E:22}
TH^{[\hbar]}(F)^{\nu}[d_X]\simeq TDb^{[\hbar]}(F)^{(n, n-\bullet)}.
\end{equation}

\begin{theorem}\label{T:22}
Let $\shm \in D^b_{coh}(\shd_X^{\hbar})$ and let $F, G\in D^b_{\R-c}(\C_X^{\hbar})$.
Then:
\begin{enumerate}
\item{We can define
$R\Gamma(X; R\mathcal{H}{om}_{\shd_X^{\hbar}}(\shm\overset{L}{\otimes_{\C^{\hbar}}} G, F\overset{\mathrm{w},\hbar}{\otimes}\sho_X))$ as an object of $D^b(FN)$, functorially with respect to $\shm$, $F$ and $G$.}

\item {We can define
 $R\Gamma_c(X;{TH}^{[\hbar]}(F)^{\nu}[d _X]\overset{L}{\otimes_{{\shd_X}^{\hbar}}}(\shm\overset{L}{\otimes_{\C^{\hbar}}} G))$ as an object of $D^b(DFN)$ functorially with respect to $\shm$, $F$ and $G$.}

 \item{The objects described in (1) and (2) can be constructed in such a way that they are dual to each other.}
\end{enumerate}
\end{theorem}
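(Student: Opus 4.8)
The strategy is to reduce Theorem \ref{T:22} to Proposition \ref{P:21} by first replacing $\shm$ by an almost free resolution and then handling the tensor factor $G$ separately. First I would invoke Proposition \ref{P:201} to choose a quasi-isomorphism $\mathcal{L}^{\bullet}\to\shm$, where each entry $\mathcal{L}^i$ is a locally finite direct sum $\oplus_{j\in J_i}{\shd_X^{\hbar}}_{U_j}$ with $U_j$ relatively compact subanalytic open. Likewise I would take an almost free resolution $G^{\bullet}$ of $G$ as in Lemma \ref{L:19}, so that $\shm\overset{L}{\otimes}_{\C^{\hbar}}G$ is represented by the total complex of $\mathcal{L}^{\bullet}\otimes_{\C^{\hbar}}G^{\bullet}$, whose entries are again locally finite direct sums of modules of the form ${\shd_X^{\hbar}}_{U}\otimes_{\C^{\hbar}}\C_V^{\hbar}\simeq ({\shd_X^{\hbar}}\otimes_{\C^{\hbar}}\C_{V})_U$ — in essence, shifted copies of $\shd_X^{\hbar}$ supported on relatively compact subanalytic opens. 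For such a building block $\mathcal{P}={\shd_X^{\hbar}}_{U}$, the adjunction $R\mathcal{H}{om}_{\shd_X^{\hbar}}(\mathcal{P}, F\overset{\mathrm{w},\hbar}{\otimes}\sho_X)\simeq (F\overset{\mathrm{w},\hbar}{\otimes}\sho_X)|_U$ together with (\ref{E:20}) reduces the first assertion to computing $R\Gamma(X;(F\overset{\mathrm{w},\hbar}{\otimes}\mathcal{C}_X^{\infty,(0,\bullet)})_U)$, which by Remark \ref{L:141} and Proposition \ref{P:21}(1) is a complex of FN spaces; dually, the tensor-hom adjunction for $TH^{[\hbar]}(F)^{\nu}[d_X]\overset{L}{\otimes}_{\shd_X^{\hbar}}\mathcal{P}$ together with (\ref{E:22}) reduces the second assertion to $R\Gamma_c(X;(TDb^{[\hbar]}(F)^{(n,n-\bullet)})_U)$, which is DFN by Proposition \ref{P:21}(2) and Remark \ref{L:14}.

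Next I would assemble these pieces. Since $\mathcal{L}^{\bullet}\otimes_{\C^{\hbar}}G^{\bullet}$ is a bounded complex whose entries are locally finite direct sums of the elementary modules above, applying $R\Gamma(X;R\mathcal{H}{om}_{\shd_X^{\hbar}}(-,F\overset{\mathrm{w},\hbar}{\otimes}\sho_X))$ yields a bounded complex each entry of which is a product (the direct sum of a locally finite family on $X_{sa}$ being a product, as used in Proposition \ref{P:21}) of FN spaces, hence FN; this produces the object of $D^b(FN)$ of assertion (1), and functoriality in $\shm,F,G$ is inherited from the functoriality of the resolutions and of the Whitney functor. Symmetrically, applying $R\Gamma_c(X;TH^{[\hbar]}(F)^{\nu}[d_X]\overset{L}{\otimes}_{\shd_X^{\hbar}}(-))$ gives a bounded complex whose entries are direct sums of DFN spaces, which are DFN, establishing (2). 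For (3), I would run the two constructions on the \emph{same} resolutions $\mathcal{L}^{\bullet}$ and $G^{\bullet}$ and the same exhausting sequence $Z_n$ of compact subanalytic sets as in the proof of Proposition \ref{P:21}; entry by entry, Lemma 4.3 of \cite{KS2} (as used in part $3$ of that proof) gives that $\Gamma(X;(F^i_{Z_n}\overset{\mathrm{w},\hbar}{\otimes}\mathcal{C}_X^{\infty,(0,\bullet)})_{U})$ and $\Gamma_c(X;(TDb^{[\hbar]}(F^i_{Z_n})^{(n,n-\bullet)})_{U})$ are mutually topologically dual, compatibly with the differentials, so passing to the $\varprojlim_n$ on one side and the $\varinjlim_n$ on the other gives dual complexes of FN and DFN spaces.

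There is one genuine subtlety: the two assertions feature the tensor factor $G$ sitting in \emph{different} positions — inside $R\mathcal{H}{om}_{\shd_X^{\hbar}}(\shm\overset{L}{\otimes}G,-)$ in (1) and inside $(-)\overset{L}{\otimes}_{\shd_X^{\hbar}}(\shm\overset{L}{\otimes}G)$ in (2) — so the duality in (3) must be checked to interchange these correctly. The point is that $G^{\bullet}$ has entries of the form $\C_V^{\hbar}$, and for such a sheaf $R\mathcal{H}{om}_{\C_X^{\hbar}}(\C_V^{\hbar},-)$ and $\C_V^{\hbar}\otimes_{\C_X^{\hbar}}(-)$ are $(-)|_V$ and $(-)_V$ respectively, which are exchanged under the $\Gamma/\Gamma_c$ duality exactly as sections over $V$ pair with compactly supported sections over $V$; combined with the self-duality of the $\shd_X^{\hbar}$-linear pairing between $F\overset{\mathrm{w},\hbar}{\otimes}\sho_X$ and $TH^{[\hbar]}(F)^{\nu}[d_X]$ coming from Proposition \ref{P:21}, this makes the two assembled complexes transpose to each other. \textbf{The main obstacle} I expect is precisely this bookkeeping of the $G$-factor together with the $\shd_X^{\hbar}$-module structures: one must verify that all the adjunction isomorphisms ($R\mathcal{H}{om}_{\shd_X^{\hbar}}(\shm,-)\simeq R\mathcal{H}{om}_{\shd_X^{\hbar}}(\shm,\shd_X^{\hbar})\otimes_{\shd_X^{\hbar}}(-)$, the side-switching via $\Omega_X$, and the $\rho^{-1}$/$\rho_*$ compatibilities from Lemma \ref{L:12}) are compatible with the topologies, so that the duality is not merely an abstract isomorphism of complexes of vector spaces but genuinely a duality of complexes of FN and DFN spaces in the sense of Section 3; all the local (in degree zero, relatively compact) statements needed for this are already supplied by Remarks \ref{L:141}, \ref{R:20}, \ref{L:14}, \ref{R:16} and the proof of Proposition \ref{P:21}.
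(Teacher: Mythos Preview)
Your proposal is correct and follows essentially the same route as the paper: resolve $\shm$ by Proposition \ref{P:201}, resolve $G$ and $F$ by Lemma \ref{L:19}, use (\ref{E:20}) and (\ref{E:22}) to pass to the Dolbeault-type complexes, and then observe that after these resolutions each term of the relevant complex is a product (resp.\ direct sum) of spaces of the form $\Gamma(U_i\cap V_j; H^{\hbar}\overset{\mathrm{w},\hbar}{\otimes}\mathcal{C}_X^{\infty,(0,k)})$ (resp.\ $\Gamma_c(U_i\cap V_j; TDb^{[\hbar]}(H^{\hbar})^{(n,n-k)})$), to which Proposition \ref{P:21} applies. Your worry about the $G$-factor sitting in different positions dissolves once you write things out this way: both sides are indexed by the \emph{same} intersections $U_i\cap V_j$ coming from the resolutions of $\shm$ and $G$, so the duality in (3) is immediate from part 3 of Proposition \ref{P:21} without any further bookkeeping.
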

\begin{proof}
We shall essentially follow the method of the proof of Theorem 6.1 of \cite{KS2}.

 Applying  Proposition \ref{P:201} let us choose $\mathcal{L}(\shm)$ an almost free resolution of $\shm$, and  by Lemma \ref{L:19}, let us choose $\mathcal{L}(G)$, $\mathcal{L}(F)$ almost free resolutions of $G$ and $F$ respectively.

1. Note that, for any $H\in \text{Mod}_{\R-c}(\C_X)$  and any $k\geq 0$,
$$\Gamma(X; \shh om_{\shd^{\hbar}_X}(\mathcal{L}(\shm)\otimes_{\C^{\hbar}_X} \mathcal{L}(G), H^{\hbar}\overset{\text{w},\hbar}{\otimes} \mathcal{C}_X ^{\infty, (0, k)}))\simeq \Pi_{i,j}\Gamma(U_i\cap V_j; H^{\hbar}\overset{\text{w},\hbar}{\otimes} \mathcal{C}_X ^{\infty, (0, k)}),$$ for adequate choice of open subanalytic relatively compact open sets $U_i$ and $V_j$ (defined by $\mathcal{L}(\shm)$ and $\mathcal{L}(G)$).

We have
\begin{equation}\label{E:216}
R\Gamma(X; R\mathcal{H}{om}_{\shd_X^{\hbar}}(\shm\overset{L}{\otimes_{\C^{\hbar}_X}} G, F\overset{\text{w},\hbar}{\otimes}\sho_X))\simeq \Gamma(X; \shh om_{\shd_X^{\hbar}}(\mathcal{L}(\shm)\otimes_{\C^{\hbar}_X}\mathcal{L}(G),
\mathcal{L}(F)\overset{\text{w},\hbar}{\otimes} \mathcal{C}_X ^{\infty, (0, \bullet)}))
\end{equation}

The result then follows by $1.$ of Proposition \ref{P:21}.

2. The proof goes similar to the proof of 1. Replace $TH^{[\hbar]}(F)^{\nu}[d_X]$ by the isomorphic complex $TDb^{[\hbar]}(F)^{(n, n-\bullet)}$.

 Noting that, for any $H\in \text{Mod}_{\R-c}(\C_X)$, $$\Gamma_c(X; TDb^{[\hbar]}(H^{\hbar})^{(n, n-k)}\otimes_{\shd^{\hbar}_X}(\mathcal{L}(\shm)\otimes_{\C^{\hbar}_X} \mathcal{L}(G)))\simeq\oplus_{i, j}\Gamma_c(U_i\cap V_j; TDb^{[\hbar]}(H^{\hbar})^{(n, n-k)})$$ the proof follows by 2. of Proposition \ref{P:21}.

 3. The proof follows straightforwardly from 1. and 2. above and 3. of Proposition \ref{P:21}.
\end{proof}

\begin{example} Let us consider the Example 8.5 of \cite {DGS}.
Let $X$ be $\R$ with the coordinate $x$ and consider the coherent $\shd_X^{\hbar}$-module defined by the equation $x-\hbar\partial_x$, $\shm=\shd_X^{\hbar}/\langle x-\hbar \partial _x\rangle.$ As proved in loc.cit, $\shm$ is isomorphic as a $\shd_X^{\hbar}$-module to $\shn=\shd_X^{\hbar}/\langle x\rangle.$

Let $F=\C_Z$, where $Z=\{0\}$. Then $\C_{\{0\}}^{\hbar}\overset{\text{w},\hbar}{\otimes}\mathcal{C}_X^{\infty}$ is given by the exact sequence $$   0\to ({\mathcal{I}^{\infty}_{X, \{0\}}})^{\hbar}\to \mathcal{C}_X^{\infty, \hbar}\to \C_{\{0\}}^{\hbar}\overset{\text{w},\hbar}{\otimes}\mathcal{C}_X^{\infty}\to 0.$$ Recall that a section of $\C_{\{0\}}^{\hbar}\overset{\text{w} }{\otimes}\mathcal{C}_X^{\infty}$ is given by $(\lambda_j)_{j\geq 0}, \lambda_j\in\C$ corresponding to $f\in \mathcal{C}_X^{\infty}$ satisfying $\partial_x^{(j)} (f)(0)=\lambda_j$.
The action of $x$ on $\C_{\{0\}}^{\hbar}\overset{\text{w},\hbar}{\otimes}\mathcal{C}_X^{\infty}\simeq (\C_{\{0\}}^{\hbar}\overset{\text{w}}{\otimes}\mathcal{C}_X^{\infty})^{\hbar}$ is induced by the action of $x$ in $\C_{\{0\}}\overset{\text{w}}{\otimes}\mathcal{C}_X^{\infty}$ which is given by $x(\lambda_j)=(\mu_j)=(j\lambda_{j-1})$ for $j\geq 0$. Therefore, $$\Gamma(X; \shh\text{om}_{\shd_X^{\hbar}}(\shm, \C_{\{0\}}^{\hbar}\overset{\text{w},\hbar}{\otimes}\mathcal{C}_X^{\infty}))=0$$ and $$\Gamma(X; \she xt^1_{\shd_X^{\hbar}}(\shm, \C_{\{0\}}^{\hbar}\overset{\text{w},\hbar}{\otimes}\mathcal{C}_X^{\infty}))\simeq \C^{\hbar}.$$
On the other hand, the action of $x$ on $\Gamma_{\{0\}}(\mathcal{D}b)^{[\hbar]}$ is induced by the action of $x$ on $\Gamma_{\{0\}}(\mathcal{D}b_X)$ which is given by $x\sum_{i\geq 0} \lambda_i\delta^{(i)}(x)=\sum_{i\geq 1}\lambda_i(-i)\delta^{(i-1)}(x)$.

Hence
$$\Gamma(X; \shh\text{om}_{\shd_X^{\hbar}}(\shm, \Gamma_{\{0\}}(\mathcal{D}b_X^{[\hbar]})))\simeq (\C\delta(x))^{[\hbar]}$$ and $\Gamma(X; \she xt^1_{\shd_X^{\hbar}}(\shm, \Gamma_{\{0\}}(\mathcal{D}b_X^{[\hbar]})))=0$.
\end{example}

{\small

Ana Rita Martins\\ Faculdade de Engenharia da Universidade Cat\'{o}lica Portuguesa,\\ Estrada Oct\'{a}vio Pato, 2635-631 Rio-de-Mouro
 Portugal\\ ritamartins@fe.ucp.pt\newline

Teresa Monteiro Fernandes\\ Centro de {\'A}lgebra da Universidade
de Lisboa e Departamento de Matem\' atica da FCUL, Complexo 2,\\ 2 Avenida Prof. Gama Pinto, 1699 Lisboa
 Portugal\\ tmf@ptmat.fc.ul.pt}

\end{document}